\newenvironment{enumeratei}{\begin{enumerate}[\upshape (i)]}%
{\end{enumerate}}
\newenvironment{enumerater}{\begin{enumerate}[\upshape (1)]}%
{\end{enumerate}}
\newcommand{\oc}[1]{\left({#1}\right]}
\newcommand{\pup}[1]{\textup{(}{#1}\textup{)}}
\newcommand{\jirr}{join-ir\-re\-duc\-i\-ble}
\newcommand{\mirr}{meet-ir\-re\-duc\-i\-ble}
\newcommand{\jirry}{join-ir\-re\-duc\-i\-bil\-i\-ty}
\newcommand{\jh}{join-ho\-mo\-mor\-phism}
\newcommand{\mh}{meet-ho\-mo\-mor\-phism}
\newcommand{\eqdef}{\underset{\mathrm{def}}{=}}
\DeclareMathOperator{\Dim}{Dim}
\DeclareMathOperator{\Spec}{Spec}
\DeclareMathOperator{\Specl}{Spec_{\ell}}
\DeclareMathOperator{\Specr}{Spec_{r}}
\DeclareMathOperator{\supp}{supp}
\newcommand{\FL}{\operatorname{F}_{\ell}}
\DeclareMathOperator{\conv}{conv}
\DeclareMathOperator{\cone}{cone}
\DeclareMathOperator{\tcl}{cl}
\DeclareMathOperator{\tin}{int}
\DeclareMathOperator{\Ji}{Ji}
\DeclareMathOperator{\Mi}{Mi}
\newcommand{\ga}{\alpha}
\newcommand{\gb}{\beta}
\newcommand{\gd}{\delta}
\newcommand{\gf}{\varphi}
\newcommand{\gl}{\lambda}
\newcommand{\gs}{\sigma}
\newcommand{\go}{\omega}
\newcommand{\eps}{\varepsilon}
\newcommand{\fin}[1]{[{#1}]^{<\omega}}
\newcommand{\bck}[1]{[\![{#1}]\!]}
\newcommand{\gL}{\Lambda}
\newcommand{\gS}{\Sigma}
\newcommand{\sd}{\mathbin{\smallsetminus}}
\newcommand{\jz}{$(\vee,0)$}
\newcommand{\js}{join-semi\-lat\-tice}
\newcommand{\jzs}{\jz-semi\-lat\-tice}
\newcommand{\two}{\mathbf{2}}
\newcommand{\three}{\mathbf{3}}
\newcommand{\ol}[1]{\overline{#1}}
\newcommand{\pI}[1]{\bigl({#1}\bigr)}
\newcommand{\pII}[1]{\Bigl({#1}\Bigr)}
\newcommand{\set}[1]{\left\{#1\right\}}
\newcommand{\setm}[2]{\set{{#1}\mid{#2}}}
\newcommand{\vecm}[2]{\left({#1}\mid{#2}\right)}
\newcommand{\seq}[1]{\langle{#1}\rangle}
\newcommand{\id}{\mathrm{id}}
\newcommand{\es}{\varnothing}
\newcommand{\res}{\mathbin{\restriction}}
\newcommand{\EE}{\mathbb{E}}
\newcommand{\ZZ}{\mathbb{Z}}
\newcommand{\QQ}{\mathbb{Q}}
\newcommand{\RR}{\mathbb{R}}
\DeclareMathOperator{\Bool}{Bool}
\DeclareMathOperator{\Clos}{Clos}
\DeclareMathOperator{\Op}{Op}
\newcommand{\Ops}{\Op^-}
\DeclareMathOperator{\Idc}{Id_c}
\newcommand{\cC}{{\mathcal{C}}}
\newcommand{\cH}{{\mathcal{H}}}
\newcommand{\cKo}{{\overset{\circ}{\mathcal{K}}}}
\newcommand{\cL}{{\mathcal{L}}}
\newcommand{\cO}{{\mathcal{O}}}
\newcommand{\cP}{{\mathcal{P}}}
\newcommand{\cX}{{\mathcal{X}}}
\numberwithin{equation}{section}
\newtheorem*{stat}{\name}
\newcommand{\name}{testing}
\newenvironment{all}[1]{\renewcommand{\name}{#1}\begin{stat}}
                        {\end{stat}}
\theoremstyle{plain}
\newtheorem{theorem}{Theorem}[section]
\newtheorem{proposition}[theorem]{Proposition}
\newtheorem{corollary}[theorem]{Corollary}
\newtheorem{lemma}[theorem]{Lemma}
\newtheorem{examplepf}[theorem]{Example}
\newtheorem{claim}{Claim}
\theoremstyle{definition}
\newtheorem{definition}[theorem]{Definition}
\newtheorem{notation}[theorem]{Notation}
\newtheorem{example}[theorem]{Example}
\newtheorem{problem}{Problem}
\theoremstyle{remark}
\newtheorem{remark}[theorem]{Remark}
\newtheorem*{note}{Note}
\newcommand{\qedc}{{\qed}~{\rm Claim~{\theclaim}.}}
\newcommand{\qedsc}{{\qed}~{\rm Claim.}}
\newenvironment{cproof}
{\begin{proof}[Proof of Claim.]}
{\qedc\renewcommand{\qed}{}\end{proof}}
\numberwithin{figure}{section}
\numberwithin{table}{section}
\newcommand{\ba}{\boldsymbol{a}}
\newcommand{\bb}{\boldsymbol{b}}
\newcommand{\be}{\boldsymbol{e}}
\newcommand{\bbf}{\boldsymbol{f}}
\newcommand{\bx}{\boldsymbol{x}}
\newcommand{\by}{\boldsymbol{y}}
\newcommand{\bz}{\boldsymbol{z}}
\newcommand{\bB}{\boldsymbol{B}}
\newcommand{\bD}{\boldsymbol{D}}
\newcommand{\va}{\mathsf{a}}
\newcommand{\vb}{\mathsf{b}}
\newcommand{\sJ}{\mathsf{J}}
\newcommand{\scL}{\mathbin{\mathscr{L}}}
\title[Spectral spaces]{Spectral spaces of countable Abelian lattice-ordered groups}
\author[F. Wehrung]{Friedrich Wehrung}
\address{LMNO, CNRS UMR 6139\\
D\'epartement de Math\'ematiques\\
Universit\'e de Caen Normandie\\
14032 Caen cedex\\
France}
\email{friedrich.wehrung01@unicaen.fr}
\urladdr{https://wehrungf.users.lmno.cnrs.fr}
\date{\today}
\subjclass[2010]{06D05; 06D20; 06D35; 06D50; 06F20; 46A55; 52A05; 52C35}
\keywords{Lattice-ordered; Abelian; group; ideal; prime; spectrum; representable; spectral space; sober; completely normal; countable; distributive; lattice; join-irreducible; Heyting algebra; closed map; consonance; difference operation; hyperplane; open; half-space}
\begin{document}

\begin{abstract}
It is well known that the \emph{$\ell$-spectrum} of an Abelian $\ell$-group, defined as the set of all its prime $\ell$-ideals with the hull-kernel topology, is a completely normal generalized spectral space.
We establish the following converse of this result.

\begin{all}{Theorem}
Every second countable, completely normal generalized spectral space is homeomorphic to the $\ell$-spectrum of some Abelian $\ell$-group.
\end{all}
We obtain this result by proving that a countable distributive lattice~$D$ with zero is isomorphic to the Stone dual of some $\ell$-spectrum (we say that~$D$ is \emph{$\ell$-rep\-re\-sentable}) if{f} for all $a,b\in D$ there are $x,y\in D$ such that $a\vee b=a\vee y=b\vee x$ and $x\wedge y=0$.
On the other hand, we construct a non-$\ell$-rep\-re\-sentable bounded distributive lattice, of cardinality~$\aleph_1$, with an $\ell$-rep\-re\-sentable countable $\scL_{\infty,\go}$-el\-e\-men\-tary sublattice.
In particular, there is no characterization, of the class of all $\ell$-rep\-re\-sentable distributive lattices, by any class of $\scL_{\infty,\go}$ sentences.
\end{abstract}

\maketitle


\section{Introduction}\label{S:Intro}
A \emph{lattice-ordered group}, or \emph{$\ell$-group} for short, is a group~$G$ endowed with a translation-invariant lattice ordering.
An \emph{$\ell$-ide\-al} of~$G$ is an order-convex, normal $\ell$-subgroup~$I$ of~$G$.
We say that~$I$ is \emph{prime} if $I\neq G$ and $x\wedge y\in I$ implies that either $x\in I$ or $y\in I$, for all $x,y\in G$.
We define the \emph{$\ell$-spec\-trum} of~$G$ as the set~$\Specl{G}$ of all prime $\ell$-ide\-als of~$G$, endowed with the ``hull-kernel'' topology, whose closed sets are exactly the sets $\setm{P\in\Specl{G}}{X\subseteq P}$ for $X\subseteq G$.
Characterizing the topological spaces~$\Specl{G}$, for \emph{Abelian} $\ell$-groups~$G$, is a long-standing open problem, which we shall call the \emph{$\ell$-spectrum problem}.

A topological space~$X$ is \emph{generalized spectral} if it is sober (i.e., every irreducible closed set is the closure of a unique singleton) and the collection of all compact open subsets of~$X$ forms a basis of the topology of~$X$, closed under intersections of any two members.
If, in addition, $X$ is compact, we say that it is \emph{spectral}.
It is well known that the $\ell$-spectrum of any Abelian $\ell$-group is a generalized spectral space;
in addition, this space is \emph{completely normal}, that is, for any points~$x$ and~$y$ in the closure of a singleton~$\set{z}$, either~$x$ is in the closure of~$\set{y}$ or~$y$ is in the closure of~$\set{x}$ (cf. Bigard, Keimel, and Wolfenstein \cite[Ch.~10]{Keim1971}).
Delzell and Madden found in~\cite{DelMad1994} an example of a completely normal spectral space which is not an $\ell$-spectrum.
However, their example is not second countable.
The main aim of the present paper is proving that there is no such counterexample in the second countable case (cf. Theorem~\ref{T:ReprCN}).
We also prove, in Section~\ref{S:NotEltary}, that the class of all Stone dual lattices of $\ell$-spectra is neither closed under products nor under homomorphic images.
We also prove that they have no $\scL_{\infty,\go}$-characterization.

For further background on the $\ell$-spectrum problem and related problems, we refer the reader to Mun\-di\-ci \cite[Problem~2]{Mund2011} (where the $\ell$-spectrum problem is stated in terms of \emph{MV-algebras}), Marra and Mundici~\cite{MaMu2002a,MaMu2002b}, Cignoli and Torrens~\cite{CigTor1996}, Di Nola and Grigolia~\cite{DinGri2004}, Cignoli, Gluschankof, and Lucas~\cite{CGL}, Iberkleid, Mart{\'{\i}}nez, and McGovern~\cite{IMM2011}, Delzell and Madden~\cite{DelMad1994,DelMad1995}, Keimel~\cite{Keim1995}.
Our main reference on $\ell$-groups will be Bigard, Keimel, and Wolfenstein~\cite{BKW}, of which we will mostly follow the notation and terminology.
All our $\ell$-groups will be written additively.
For background on lattice theory, we refer to Gr\"atzer \cite{GGFC,LTF}.
As customary, we denote by~$\rightarrow$, or~$\rightarrow_D$ if~$D$ needs to be specified, the Heyting implication in a Heyting algebra~$D$ (cf. Johnstone~\cite{Johnst1982}): hence $a\rightarrow_Db$ is the largest $x\in D$ such that $a\wedge x\leq b$.

\section{Strategy of the proof}\label{S:Strategy}

\subsection{Reduction to a lattice-theoretical problem; consonance}
\label{Su:RedDLat}
Recall the classical \emph{Stone duality} (cf. Stone~\cite{Stone38a}), between distributive lattices with zero and
$0$-lattice homomorphisms with cofinal%
\footnote{A subset~$X$ in a poset~$P$ is \emph{cofinal} if every element of~$P$ lies below some element of~$X$.}
range on the one hand, generalized spectral spaces and spectral%
\footnote{A map between generalized spectral spaces is \emph{spectral} if the inverse image of any compact open set is compact open.}
maps on the other hand.
This duality sends every distributive lattice~$D$ with zero to the set~$\Spec{D}$ of all its (proper) prime ideals, endowed with the usual hull-kernel topology (cf. Gr\"atzer \cite[\S~2.5]{LTF}, Johnstone \cite[\S~II.3]{Johnst1982}); in the other direction, it sends every generalized spectral space~$X$ to the lattice~$\cKo(X)$ of all its compact open subsets.

Characterizing all $\ell$-spectra of Abelian $\ell$-groups amounts to characterizing all their Stone duals, which are distributive lattices with zero.

Now for every Abelian $\ell$-group~$G$, the Stone dual $\cKo(\Specl{G})$ of~$\Specl{G}$ is isomorphic to the (distributive) lattice~$\Idc{G}$ of all principal $\ell$-ideals of~$G$ (cf. Proposition~1.19, together with Theorem~1.10 and Lemma~1.20, in Keimel \cite{Keim1971}); call such lattices \emph{$\ell$-rep\-re\-sentable}.
Hence, we get

\begin{lemma}\label{L:Sp2Lat}
A topological space~$X$ is homeomorphic to the $\ell$-spec\-trum of an Abelian $\ell$-group if{f} it is generalized spectral and the lattice~$\cKo(X)$ is $\ell$-rep\-re\-sentable.
\end{lemma}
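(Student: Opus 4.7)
The plan is to derive the lemma as a straightforward assembly of Stone duality together with the cited isomorphism $\cKo(\Specl G)\cong\Idc G$ from Keimel~\cite{Keim1971}; no new construction is required.

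For the forward direction, suppose $X$ is homeomorphic to $\Specl G$ for some Abelian $\ell$-group $G$. Then the fact, recalled in the introduction (via Bigard, Keimel, and Wolfenstein \cite[Ch.~10]{Keim1971}), that every such $\ell$-spectrum is a generalized spectral space, immediately gives that $X$ is generalized spectral. Moreover, homeomorphism yields $\cKo(X)\cong\cKo(\Specl G)$, which by the cited result of Keimel is isomorphic to $\Idc G$; hence $\cKo(X)$ is, by definition, $\ell$-rep\-re\-sentable.

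For the backward direction, suppose $X$ is generalized spectral and $\cKo(X)$ is $\ell$-rep\-re\-sentable, so $\cKo(X)\cong\Idc G$ for some Abelian $\ell$-group $G$. Stone duality, as recalled at the start of Section~\ref{Su:RedDLat}, is a full contravariant equivalence between generalized spectral spaces (with spectral maps) and distributive lattices with zero (with $0$-lattice homomorphisms with cofinal range); in particular, any generalized spectral space $Y$ is recovered, up to homeomorphism, as $\Spec(\cKo(Y))$. Applying this to both $X$ and to $\Specl G$ (which is generalized spectral by the forward direction), and using $\cKo(X)\cong\Idc G\cong\cKo(\Specl G)$, we obtain the chain of homeomorphisms
\[
X\;\cong\;\Spec(\cKo(X))\;\cong\;\Spec(\cKo(\Specl G))\;\cong\;\Specl G,
\]
which proves the claim. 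The only real subtleties to verify are that the isomorphism $\cKo(X)\cong\Idc G$ lifts through the Stone duality functor to a homeomorphism of spaces, and that it respects the compactness/cofinality requirement on morphisms; both are formal consequences of the contravariant equivalence. There is no conceptual obstacle in this lemma — it is essentially a dictionary translation from topology to lattice theory, and its role is precisely to reduce the $\ell$-spectrum problem to the purely lattice-theoretical problem of characterizing $\ell$-rep\-re\-sentable distributive lattices with zero.
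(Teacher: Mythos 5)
Your proposal is correct and matches the paper's own (implicit) argument: the lemma is stated there as an immediate consequence of Stone duality together with Keimel's isomorphism $\cKo(\Specl{G})\cong\Idc{G}$, which is exactly the assembly you carry out in both directions. Nothing further is needed.
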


In the sequel, we will denote by~$\seq{x}$, or~$\seq{x}_G$ if~$G$ needs to be specified, the $\ell$-ideal of~$G$ generated by any element~$x$ of an $\ell$-group~$G$.

The lattice-theoretical analogue of complete normality is given as follows.

\begin{definition}\label{D:consonant}
Two elements~$a$ and~$b$, in a distributive lattice~$D$ with zero, are \emph{consonant}, in notation $a\sim_Db$, if there are $x,y\in D$ such that $a\leq b\vee x$, $b\leq a\vee y$, and $x\wedge y=0$.
A subset~$X$ of~$D$ is consonant if every pair of elements in~$X$ is consonant.
We say that~$D$ is \emph{completely normal} if it is a consonant subset of itself.
\end{definition}

The following result is a restatement of Monteiro~\cite[Th\'e\-o\-r\`e\-me~V.3.1]{Mont1954}.

\begin{proposition}\label{P:CNTop2Lat}
A generalized spectral space~$X$ is completely normal if{f} its lattice~$\cKo(X)$, of all compact open subsets, is completely normal.
\end{proposition}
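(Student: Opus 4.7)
The plan is to use Stone duality to identify~$X$ with $\Spec(\cKo(X))$, translate both sides of the equivalence into statements about prime ideals of $D := \cKo(X)$, and then invoke Monteiro's cited theorem.

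Since~$X$ is sober (as a generalized spectral space), the assignment $x \mapsto P_x := \setm{U \in D}{x \notin U}$ is a homeomorphism $X \to \Spec D$. Using that the compact open sets form a basis of the topology of~$X$, a short verification gives that $x \in \overline{\set{z}}$ if{f} $P_z \subseteq P_x$. Consequently, the topological complete normality of~$X$ translates into the following condition on~$D$: for every prime ideal~$R$ of~$D$, the prime ideals of~$D$ containing~$R$ form a chain under inclusion. Monteiro's Theorem~V.3.1 of~\cite{Mont1954} then asserts precisely that this chain condition on primes is equivalent to the consonance condition of Definition~\ref{D:consonant}, yielding the proposition.

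For the reader's benefit I would include a direct proof of the easy direction (complete normality of~$D$ implies complete normality of~$X$): given $x, y \in \overline{\set{z}}$ with $x \notin \overline{\set{y}}$ and $y \notin \overline{\set{x}}$, pick $U, V \in D$ with $x \in U \not\ni y$ and $y \in V \not\ni x$; since every open neighbourhood of~$x$ (respectively~$y$) must contain~$z$, both~$U$ and~$V$ contain~$z$. Consonance of~$U$ and~$V$ yields $A, B \in D$ with $U \subseteq V \cup A$, $V \subseteq U \cup B$, and $A \cap B = \es$; then $x \in U \setminus V$ forces $x \in A$, similarly $y \in B$; and finally the fact that every open containing~$x$ (resp.~$y$) must contain~$z$ gives $z \in A$ and $z \in B$, contradicting $A \cap B = \es$.

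The main obstacle is the converse implication (complete normality of~$X$ implies that of~$D$), which requires constructing the consonance witnesses from the prime-ideal chain condition by a Zorn-type maximality argument. This is the substantive content of Monteiro's theorem, and I would simply refer to the cited reference rather than reproduce the argument.
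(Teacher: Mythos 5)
Your proposal is correct and follows essentially the same route as the paper, which likewise gives no independent argument but simply declares the proposition to be a restatement of Monteiro's Th\'eor\`eme~V.3.1; your added Stone-duality translation (identifying complete normality of~$X$ with the condition that the primes above any fixed prime of~$\cKo(X)$ form a chain) and your direct verification of the direction ``$\cKo(X)$ completely normal $\Rightarrow$ $X$ completely normal'' are both accurate and are useful supplementary detail rather than a different method.
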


The countable%
\footnote{In the present paper, ``countable'' will always mean ``at most countable''.}
case of the $\ell$-spectrum problem can thus be restated more ambitiously as follows:
\begin{quote}\em
Prove that every countable completely normal distributive lattice with zero is $\ell$-rep\-re\-sentable.
\end{quote}

\subsection{Closed homomorphisms}
\label{Su:closedhoms}

Denote by~$\FL(\go)$ the free Abelian $\ell$-group on the first infinite ordinal $\go\eqdef\set{0,1,2,\dots}$.
Given a countable, bound\-ed, completely normal distributive lattice~$L$, our main goal is to construct a surjective lattice homomorphism $f\colon\Idc{\FL(\go)}\twoheadrightarrow L$ which induces an isomorphism $\Idc\pI{\FL(\go)/I}\cong L$ for a suitable $\ell$-ideal~$I$ of~$\FL(\go)$.
Our next definition introduces the lattice homomorphisms allowing (a bit more than) the latter step, itself contained in Lemma~\ref{L:FactClosed}.

\begin{definition}\label{D:closed}
A \jh~$f\colon A\to B$, between \js{s}~$A$ and~$B$, is \emph{closed} if whenever $a_0,a_1\in A$ and $b\in B$, if $f(a_0)\leq f(a_1)\vee b$, then there exists $x\in A$ such that $a_0\leq a_1\vee x$ and $f(x)\leq b$.
\end{definition}


\begin{lemma}\label{L:FactClosed}
Let~$G$ be an Abelian $\ell$-group, let~$S$ be a distributive lattice with zero, and let $\gf\colon\Idc G\twoheadrightarrow S$ be a closed surjective \jh.
Then $I\eqdef\setm{x\in G}{\gf(\seq{x}_G)=0}$ is an $\ell$-ide\-al of~$G$, and there is a unique isomorphism $\psi\colon\Idc(G/I)\to S$ such that $\psi(\seq{x+I}_{G/I})=\gf(\seq{x}_G)$ for every $x\in G^+$.
\end{lemma}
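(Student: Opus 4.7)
The plan is threefold: (a)~verify that~$I$ is an $\ell$-ideal; (b)~factor~$\gf$ through the canonical surjection $\Idc\pi\colon\Idc G\twoheadrightarrow\Idc(G/I)$ induced by $\pi\colon G\twoheadrightarrow G/I$, producing a \jh~$\psi$; (c)~use the closedness hypothesis to show~$\psi$ is injective, whence a lattice isomorphism. The main obstacle is step~(c): parts~(a) and~(b) require only that~$\gf$ be a monotone \jh, whereas the closedness hypothesis is precisely what is designed to make~$\psi$ bijective.

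For~(a), since $\seq{-x}_G=\seq{x}_G=\seq{|x|}_G$ and~$\gf$ is monotone, $I$ is closed under negation and under the relation $|z|\leq|x|$. Closure of~$I$ under addition follows from $\seq{x+y}_G\leq\seq{|x|+|y|}_G=\seq{x}_G\vee\seq{y}_G$ combined with~$\gf$ being a \jh{} (note $\gf(\seq{0}_G)=0$ holds automatically, since~$\gf$ is surjective and~$\seq{0}_G$ is the least element of~$\Idc G$). Normality is automatic in the Abelian setting.

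For~(b), it suffices by symmetry to verify that $\gf(\seq{x}_G)\leq\gf(\seq{y}_G)$ whenever $x,y\in G^+$ satisfy $\seq{x+I}_{G/I}\leq\seq{y+I}_{G/I}$. The latter inequality supplies $n\in\go$ with $w\eqdef(x-ny)^+\in I$; then $x\leq ny+w$ yields $\seq{x}_G\leq\seq{y}_G\vee\seq{w}_G$, and applying~$\gf$ together with $\gf(\seq{w}_G)=0$ gives what is needed. This produces a \jh~$\psi\colon\Idc(G/I)\to S$ with $\psi(\seq{x+I}_{G/I})=\gf(\seq{x}_G)$ for every $x\in G^+$, surjective because~$\gf$ is.

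For~(c), suppose $\gf(\seq{x}_G)=\gf(\seq{y}_G)$ with $x,y\in G^+$. Then $\gf(\seq{x}_G)\leq\gf(\seq{y}_G)\vee 0$, so closedness supplies $\xi\in\Idc G$ with $\seq{x}_G\leq\seq{y}_G\vee\xi$ and $\gf(\xi)=0$. Since every compact $\ell$-ideal of an Abelian $\ell$-group is principal, $\xi=\seq{w}_G$ for some $w\in G^+\cap I$. From $\seq{x}_G\leq\seq{y+w}_G$ one extracts $n\in\go$ with $x\leq n(y+w)$, so $(x-ny)^+\leq nw$, and convexity of~$I$ forces $(x-ny)^+\in I$; that is, $\seq{x+I}_{G/I}\leq\seq{y+I}_{G/I}$. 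Symmetry yields equality, so~$\psi$ is injective. A bijective \jh{} between lattices is automatically order-reflecting, hence a lattice isomorphism; uniqueness of~$\psi$ follows from the fact that every element of~$\Idc(G/I)$ has the form $\seq{x+I}_{G/I}$ for some $x\in G^+$.
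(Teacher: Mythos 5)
Your proposal is correct and follows essentially the same route as the paper: the well-definedness of~$\psi$ and the verification that~$I$ is an $\ell$-ideal are the parts the paper dismisses as straightforward, and your step~(c) is exactly the paper's key argument (closedness yields $\seq{x}_G\subseteq\seq{y}_G\vee\seq{w}_G$ with $w\in I$, hence $x\leq n(y+w)$ and $\seq{x+I}_{G/I}\subseteq\seq{y+I}_{G/I}$). The only cosmetic difference is that the paper phrases the conclusion as~$\psi$ being an order-embedding, while you prove injectivity and then invoke the fact that a bijective \jh{} is order-reflecting; the two are interchangeable here.
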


\begin{proof}
It is straightforward to verify that~$I$ is an $\ell$-ide\-al of~$G$ and that there is a unique map $\psi\colon\Idc(G/I)\to S$ such that $\psi(\seq{x+I}_{G/I})=\gf(\seq{x}_G)$ for every $x\in G^+$.
Since~$\gf$ is a surjective \jh, so is~$\psi$.
It remains to verify that~$\psi$ is an order-embedding.

Let $x,y\in G^+$ such that $\psi(\seq{x+I}_{G/I})\leq\psi(\seq{y+I}_{G/I})$.
This means that $\gf(\seq{x}_G)\leq\gf(\seq{y}_G)$, thus, since~$\gf$ is a closed map, there exists $\bz\in\Idc G$ such that $\seq{x}_G\subseteq\seq{y}_G\vee\bz$ and $\gf(\bz)=0$.
Writing $\bz=\seq{z}_G$, for $z\in G^+$, this means that $z\in I$ and $x\leq ny+nz$ for some positive integer~$n$.
Therefore, $x+I\leq n(y+I)$,  so $\seq{x+I}_{G/I}\subseteq\seq{y+I}_{G/I}$.
\end{proof}

Although this fact will not be used further in the paper, we record here that much of the relevance of closed maps is contained in the following easy result.

\begin{proposition}\label{P:Idcf_closed}
Let~$G$ and~$H$ be Abelian $\ell$-groups and let $f\colon G\to H$ be an $\ell$-homomorphism.
Then the map $\Idc{f}\colon\Idc{G}\to\Idc{H}$, $\seq{x}_G\mapsto\seq{f(x)}_H$ is a closed $0$-lattice homomorphism.
\end{proposition}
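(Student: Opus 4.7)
My plan would be to unpack each piece of the conclusion in turn, exploiting the standard fact that in an Abelian $\ell$-group the principal $\ell$-ideal generated by $x$ equals $\seq{x}_G=\seq{|x|}_G=\setm{g\in G}{|g|\leq n|x|\text{ for some }n\in\NN}$, together with the observation that any $\ell$-homomorphism preserves $|\cdot|$, $\vee$, $\wedge$, and $(\cdot)^+$. This lets me reduce every computation to positive elements.

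First I would check that $\Idc f$ is a well-defined $0$-lattice homomorphism. Well-definedness follows because $\seq{x}_G=\seq{y}_G$ iff $|x|$ and $|y|$ dominate each other up to integer multiples, which is preserved by~$f$. Preservation of~$0$ is trivial. For joins and meets one observes that $\seq{x}_G\vee\seq{y}_G=\seq{|x|+|y|}_G$ while $\seq{x}_G\wedge\seq{y}_G=\seq{|x|\wedge|y|}_G$, and $f$ carries both $|x|+|y|$ and $|x|\wedge|y|$ to $|f(x)|+|f(y)|$ and $|f(x)|\wedge|f(y)|$ respectively.

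The substantive step is closedness. Suppose $a_0,a_1\in G^+$ and $\bb\in\Idc H$ satisfy $\seq{f(a_0)}_H\subseteq\seq{f(a_1)}_H\vee\bb$; writing $\bb=\seq{b}_H$ with $b\in H^+$, this translates to an inequality $f(a_0)\leq n(f(a_1)+b)$ for some $n\in\NN$. The natural candidate is the truncated difference
\[
z\eqdef(a_0-na_1)^+\in G^+,
\]
for which $a_0\leq na_1+z$ holds unconditionally, yielding $\seq{a_0}_G\subseteq\seq{a_1}_G\vee\seq{z}_G$. Since~$f$ is an $\ell$-homomorphism, it commutes with $(\cdot)^+$, so
\[
f(z)=\pI{f(a_0)-nf(a_1)}^+\leq nb,
\]
where the inequality uses the assumption $f(a_0)\leq nf(a_1)+nb$. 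Hence $\seq{f(z)}_H\subseteq\seq{b}_H=\bb$, so $\bz\eqdef\seq{z}_G$ witnesses closedness.

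There is no real obstacle here: the only mildly delicate point is remembering that $\ell$-homomorphisms commute with positive parts, which reduces the closedness verification to a one-line Rees-style construction. The proof should fit in a short paragraph after the two easy verifications of the lattice-homomorphism part.
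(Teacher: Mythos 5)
Your proposal is correct and follows essentially the same route as the paper: both verify the lattice-homomorphism part directly and then witness closedness with the truncated difference $(a_0-na_1)^+$, using that $\ell$-homomorphisms commute with $(\cdot)^+$ and that $b\geq 0$ converts $f(a_0)\leq n(f(a_1)+b)$ into $\pI{f(a_0)-nf(a_1)}^+\leq nb$. The only difference is that you spell out the (well-definedness and) join/meet preservation that the paper dismisses as obvious.
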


\begin{proof}
It is obvious that the map $\bbf\eqdef\Idc{f}$ is a $0$-lattice homomorphism.
Let $\ba_0,\ba_1\in\Idc G$ and let $\bb\in\Idc H$ such that $\bbf(\ba_0)\subseteq\bbf(\ba_1)\vee\bb$.
Pick $a_0,a_1\in G^+$, $b\in H^+$ such that each $\ba_i=\seq{a_i}_G$ and $\bb=\seq{b}_H$.
Then the assumption $\bbf(\ba_0)\subseteq\bbf(\ba_1)\vee\bb$ means that there exists a positive integer~$n$ such that $f(a_0)\leq n(f(a_1)+b)$, which, since $b\geq0$, is equivalent to $(f(a_0)-nf(a_1))^+\leq nb$, that is, since~$f$ is an $\ell$-homomorphism, $f\pI{(a_0-na_1)^+}\leq nb$.
Therefore, setting $\bx\eqdef\seq{(a_0-na_1)^+}_G$, we get $\ba_0\subseteq\ba_1\vee\bx$ and $\bbf(\bx)\subseteq\bb$.
\end{proof}

\begin{example}\label{Ex:Chain2Square}
Using Proposition~\ref{P:Idcf_closed}, it is easy to construct examples of non-$\ell$-rep\-re\-sentable $0,1$-lattice homomorphisms between $\ell$-rep\-re\-sentable finite distributive lattices: for example, consider the unique zero-separating map $\bbf\colon\three\twoheadrightarrow\two$ \pup{where $\two\eqdef\set{0,1}$ and $\three\eqdef\set{0,1,2}$ with their natural orderings}.
\end{example}

\subsection{Elementary blocks: the lattices~$\Ops(\cH)$}
\label{Su:EltaryBl}
Our construction of a closed surjective lattice homomorphism $f\colon\Idc{\FL(\go)}\twoheadrightarrow D$ will be performed stepwise, by expressing~$\Idc{\FL(\go)}$ as a countable ascending union $\bigcup_{n<\go}E_n$, for suitable finite sublattices~$E_n$ (the ``elementary blocks'' of the construction) and homomorphisms $f_n\colon E_n\to L$, then extending each~$f_n$ to~$f_{n+1}$.
Each step of the construction will be one of the following:
\begin{enumerater}
\item\label{ExtDom}
extend the domain of~$f_n$ --- in order to get the final map~$f$ defined on all of~$\Idc{\FL(\go)}$;
this will be done in Section~\ref{S:JirrOpcH}, \emph{via} a lattice-theoretical homomorphism extension result (Lemma~\ref{L:Suff41stepCNpure}) established in Section~\ref{S:HomExt};

\item\label{ExtClos}
correct ``closure defects'' of~$f_n$ (i.e., $f_n(a_0)\leq f_n(a_1)\vee b$ with no~$x$ such that $a_0\leq a_1\vee x$ and $f_n(x)\leq b$) --- in order to get~$f$ closed (Section~\ref{S:ForcCl});

\item\label{ExtSurj}
add elements to the range of~$f_n$ --- in order to get~$f$ surjective (Section~\ref{S:ExtSpecial}).
\end{enumerater}

Elaborating on the final example in Di Nola and Grigolia~\cite{DinGri2004}, it can be seen that not all the~$E_n$ can be taken completely normal.
Our~$E_n$ will be defined as sublattices, of the powerset lattice of an infinite-dimensional vector space~$\RR^{(\go)}$, generated by open half-spaces arising from finite collections of hyperplanes.
Those lattices will be denoted in the form~$\Ops(\cH)$ (cf. Notation~\ref{Not:BoolOpcH} and Lemma~\ref{L:OpscH}).
This will be made possible by the Baker-Beynon duality.

While Steps~\eqref{ExtDom} and~\eqref{ExtClos} above require relatively complex arguments, they remain valid with~$\RR^{(\go)}$ replaced by~$\RR^d$ for any positive integer~$d$, and in fact any topological vector space.
On the other hand, while the argument handling Step~\eqref{ExtSurj} is noticeably easier, it requires an infinite-dimensional ambient space.

\section{Difference operations}
\label{S:DiffOp}

The present section consists of a few technical lattice-theoretical results, mostly aimed at Lemmas~\ref{L:Suff41stepCNpure} and~\ref{L:1stepForcCl}, describing how the concept of a \emph{difference operation} (Definition~\ref{D:DiffFunct}) works in the presence of consonance.

We denote by~$\Ji L$ (resp., $\Mi L$) the set of all \jirr\ (resp., \mirr) elements in a lattice~$L$.
For $p\in L$, we denote by~$p_*$ the largest element of~$L$ smaller than~$p$ --- also called the \emph{lower cover} of~$p$ (cf. Gr\"atzer \cite[p.~102]{LTF}).
If~$L$ is finite, then~$p_*$ exists if{f}~$p\in\Ji{L}$.
We first state a preparatory lemma.

\begin{lemma}[folklore; see Exercises~8.5 and~8.6 in Davey and Priestley~\cite{DavPri1990}]\label{L:pdagger}
Let~$D$ be a finite distributive lattice.
Then every \jirr\ element~$p$ of~$D$ is \emph{join-prime}, that is, it is nonzero and $p\leq x\vee y$ implies that $p\leq x$ or $p\leq y$, for all $x,y\in D$.
Moreover, the subset $\setm{x\in D}{p\nleq x}$ has a largest element~$p^{\dagger}$.
The assignment $p\mapsto p^{\dagger}$ defines an order-isomorphism from~$\Ji D$ onto~$\Mi D$.
\end{lemma}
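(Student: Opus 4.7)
The plan is to prove the three assertions in turn, each reducing to a short argument from distributivity and finiteness.

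First, if $p\in\Ji D$ and $p\leq x\vee y$, then distributivity yields $p=p\wedge(x\vee y)=(p\wedge x)\vee(p\wedge y)$, and join-irreducibility of~$p$ forces $p=p\wedge x$ or $p=p\wedge y$, proving join-primeness. Consequently the set $S_p\eqdef\setm{x\in D}{p\nleq x}$ is closed under binary joins; since~$D$ is finite and $0\in S_p$, the element $p^{\dagger}\eqdef\bigvee S_p$ lies in $S_p$ and is its largest member. To see $p^{\dagger}\in\Mi D$, suppose $p^{\dagger}=u\wedge v$ with $u,v>p^{\dagger}$; by maximality of~$p^{\dagger}$, both~$u$ and~$v$ are outside $S_p$, that is, $p\leq u$ and $p\leq v$, whence $p\leq p^{\dagger}$, contradicting $p^{\dagger}\in S_p$.

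Next I would construct the inverse map by the order-dual argument. In a distributive lattice, any $m\in\Mi D$ is meet-prime: from $x\wedge y\leq m$ one gets $m=m\vee(x\wedge y)=(m\vee x)\wedge(m\vee y)$, and meet-irreducibility forces $m\vee x=m$ or $m\vee y=m$, i.e., $x\leq m$ or $y\leq m$. Hence $T_m\eqdef\setm{x\in D}{x\nleq m}$ is closed under binary meets and contains the top element of~$D$ (since $m<1$), so $m_{\dagger}\eqdef\bigwedge T_m$ is its minimum and is join-irreducible by the symmetric argument. I would then verify that the two assignments are mutually inverse. For $(p^{\dagger})_{\dagger}=p$: $p\in T_{p^{\dagger}}$ gives $(p^{\dagger})_{\dagger}\leq p$; conversely, $p\nleq p_*$ means $p_*\in S_p$ and hence $p_*\leq p^{\dagger}$, so any $y<p$ satisfies $y\leq p_*\leq p^{\dagger}$, i.e., $y\notin T_{p^{\dagger}}$, forcing $(p^{\dagger})_{\dagger}\geq p$. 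The identity $(m_{\dagger})^{\dagger}=m$ is the order-dual.

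Finally, order-preservation is immediate: if $p\leq q$ in $\Ji D$ and $q\leq p^{\dagger}$, then $p\leq p^{\dagger}$, contradicting $p^{\dagger}\in S_p$; hence $q\nleq p^{\dagger}$, so $p^{\dagger}\in S_q$, yielding $p^{\dagger}\leq q^{\dagger}$. Order-reflection is the same statement applied to the inverse $m\mapsto m_{\dagger}$. The result is classical folklore and no step is substantively difficult; the only real bookkeeping concern is keeping the duality symmetric between the $S_p$/$p^{\dagger}$ side and the $T_m$/$m_{\dagger}$ side, since any asymmetry will muddle the verification that the two maps are mutual inverses.
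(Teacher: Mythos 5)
Your proof is correct and complete. The paper itself gives no proof of this lemma, citing it as folklore (Exercises~8.5 and~8.6 in Davey--Priestley), and your argument is exactly the standard one being invoked there: join-primeness from distributivity, $p^{\dagger}=\bigvee\setm{x}{p\nleq x}$ using finiteness, the dual construction $m\mapsto m_{\dagger}$, and the verification that the two maps are mutually inverse and order-preserving.
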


We now introduce one of our main lattice-theoretical concepts.

\begin{definition}\label{D:DiffFunct}
Let~$L$ be a lattice and let~$S$ be a \jzs.
A map $L\times L\to S$, $(x,y)\mapsto x\sd y$ is an \emph{$S$-valued difference operation} on~$L$ if the following statements hold:
\begin{itemize}
\item[(D0)]
$x\sd x=0$, for all $x\in L$.

\item[(D1)]
$x\sd z=(x\sd y)\vee(y\sd z)$, for all $x,y,z\in L$ such that $x\geq y\geq z$.

\item[(D2)]
$x\sd y=(x\vee y)\sd y=x\sd(x\wedge y)$, for all $x,y\in L$.

\end{itemize}

%
\end{definition}

Although we will need the following lemma only in case~$L$ is distributive, we found it worth noticing that it holds in full generality.

\begin{lemma}\label{L:TrIneq}
Let~$L$ be a lattice, let~$S$ be a \jzs, and let~$\sd$ be an $S$-valued difference operation on~$L$.
Then $x\sd z\leq(x\sd y)\vee(y\sd z)$, for all $x,y,z\in S$ \pup{\emph{triangle inequality}}.
Furthermore, the map $(x,y)\mapsto x\sd y$ is order-preserving in~$x$ and order-reversing in~$y$.
\end{lemma}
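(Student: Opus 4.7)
The plan is to establish the two monotonicity statements first, and then to combine them with a single application of (D1) along a well-chosen three-term chain to get the triangle inequality.

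For order-preservation in the first variable, suppose $x\leq x'$. I rewrite both $x\sd y$ and $x'\sd y$ via the identity $a\sd y=(a\vee y)\sd y$ from (D2). Since $x'\vee y\geq x\vee y\geq y$ is then a chain in $L$, applying (D1) to it expresses $(x'\vee y)\sd y$ as a join in which $(x\vee y)\sd y=x\sd y$ appears as one of the joinands, forcing $x\sd y\leq x'\sd y$. Order-reversal in the second variable is dual: use the identity $a\sd b=a\sd(a\wedge b)$ and apply (D1) to the chain $x\geq x\wedge y'\geq x\wedge y$ whenever $y\leq y'$.

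For the triangle inequality, the key move is to apply (D1) to the three-element chain $x\vee y\vee z\geq y\vee z\geq z$, obtaining
\[
(x\vee y\vee z)\sd z=\bigl((x\vee y\vee z)\sd(y\vee z)\bigr)\vee\bigl((y\vee z)\sd z\bigr).
\]
Two applications of (D2) collapse the joinands on the right to $x\sd(y\vee z)$ and $y\sd z$, respectively. On the other side, (D2) together with the first-variable monotonicity just established gives $x\sd z=(x\vee z)\sd z\leq(x\vee y\vee z)\sd z$, and second-variable antitonicity gives $x\sd(y\vee z)\leq x\sd y$. Stringing these inequalities together yields $x\sd z\leq(x\sd y)\vee(y\sd z)$.

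The only real obstacle is spotting the right chain on which to invoke (D1) in the triangle step; once one writes down $x\vee y\vee z\geq y\vee z\geq z$, (D2) and the monotonicity properties do all the remaining bookkeeping. Notably, the whole argument is chain-based and never appeals to distributivity of $L$, which is consistent with the claim that the lemma holds in full generality.
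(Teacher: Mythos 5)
Your proof is correct, but it takes a genuinely different route from the paper. The paper first obtains the triangle inequality by invoking the universal property of the dimension monoid $\Dim L$: axioms (D0)--(D2) yield a monoid homomorphism $\mu\colon\Dim L\to S$ with $\mu(\Delta(x,y))=y\sd x$, and the inequality $\Delta^+(x,z)\leq\Delta^+(x,y)+\Delta^+(y,z)$ from \cite[Prop.~1.9]{WDim} is then pushed forward through $\mu$; monotonicity in each variable is afterwards \emph{deduced from} the triangle inequality via $x_1\sd x_2=0$ when $x_1\leq x_2$. You reverse the logical order: you first prove both monotonicity statements directly from (D1) applied to the chains $x'\vee y\geq x\vee y\geq y$ and $x\geq x\wedge y'\geq x\wedge y$ (together with (D2)), and then obtain the triangle inequality from (D1) on the chain $x\vee y\vee z\geq y\vee z\geq z$ combined with the monotonicity just established. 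Every step checks out. What the paper's route buys is brevity on the page and a conceptual placement of difference operations inside the dimension-monoid framework, at the cost of an external citation; your route buys a fully self-contained, elementary argument from the axioms alone, which also makes transparent the paper's remark that no distributivity of $L$ is needed.
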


\begin{proof}
As in~\cite{WDim}, denote by~$\Delta(x,y)$, for $x\leq y$ in~$L$, the canonical generators of the \emph{dimension monoid}~$\Dim L$ of~$L$.
By the universal property defining~$\Dim L$, our axioms (D0)--(D2) ensure that there is a unique monoid homomorphism $\mu\colon\Dim L\to\nobreak S$ such that $\mu(\Delta(x,y))=y\sd x$ for all $x\leq y$ in~$L$.
Set $\Delta^+(x,y)\eqdef\Delta(x\wedge y,x)$, now for all $x,y\in L$.

Denoting by~$\leq$ the algebraic preordering of~$\Dim L$ (i.e., $\ga\leq\gb$ if there exists~$\gamma$ such that $\gb=\ga+\gamma$), we established in \cite[Prop.~1.9]{WDim} that $\Delta^+(x,z)\leq\Delta^+(x,y)+\Delta^+(y,z)$, for all $x,y,z\in L$.
By taking the image of that inequality under the monoid homomorphism~$\mu$ and using~(D2), the triangle inequality follows.

%
%

Now let $x_1,x_2,y\in L$ with $x_1\leq x_2$.
{}From~(D2) and~(D0) it follows that
 \[
 x_1\sd x_2=x_1\sd(x_1\wedge x_2)=x_1\sd x_1=0\,,
 \]
thus, by the triangle inequality, $x_1\sd y\leq(x_1\sd x_2)\vee(x_2\sd y)=x_2\sd y$.
The proof, that $y_1\leq y_2$ implies $x\sd y_2\leq x\sd y_1$, is similar.
\end{proof}

\begin{lemma}\label{L:D(p_*,p)2D(a,b)}
Let~$L$ be a finite lattice, let~$S$ be a \jzs, and let~$\sd$ be an~$S$-valued difference operation on~$L$.
Then the following statement holds:
 \begin{equation}\label{Eq:D(p_*,p)2D(a,b)}
 a\sd b=\bigvee\vecm{p\sd p_*}
 {p\in\Ji{L}\,,\ p\leq a\,,\ p\nleq b}\,,\quad
 \text{for all }a,b\in L\,.
 \end{equation}
 \end{lemma}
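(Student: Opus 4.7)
The plan is to prove the two inequalities separately, making essential use of the order monotonicity properties from Lemma~\ref{L:TrIneq} and the well-known fact that for $p\in\Ji{L}$ in a finite lattice, $p_*$ is not merely the unique lower cover of~$p$ but in fact the maximum of the set $\setm{x\in L}{x<p}$ (because $\setm{x\in L}{x<p}$ is closed under binary joins by \jirry).

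For the direction $(\geq)$, I would fix $p\in\Ji{L}$ with $p\leq a$ and $p\nleq b$, and show directly that $p\sd p_*\leq a\sd b$. Since $p\nleq b$, one has $p\wedge b<p$, hence $p\wedge b\leq p_*$ by the remark above. Using that $y\mapsto x\sd y$ is order-reversing together with~(D2), I get
\[
p\sd p_*\;\leq\;p\sd(p\wedge b)\;=\;p\sd b\,.
\]
Then using that $x\mapsto x\sd y$ is order-preserving and $p\leq a$, I conclude $p\sd b\leq a\sd b$. Taking joins over all such~$p$ gives the required inequality.

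For the direction $(\leq)$, I would first reduce to the case $b\leq a$ by observing that~(D2) gives $a\sd b=a\sd(a\wedge b)$ while the indexing set on the right-hand side of~\eqref{Eq:D(p_*,p)2D(a,b)} is unchanged when $b$ is replaced by $a\wedge b$ (since $p\leq a$ and $p\nleq b$ are together equivalent to $p\leq a$ and $p\nleq a\wedge b$). Assuming $b\leq a$, the key step is to enumerate $J\eqdef\setm{p\in\Ji{L}}{p\leq a,\ p\nleq b}=\set{p_1,\dots,p_n}$ and form the ascending chain $a_0\eqdef b$, $a_i\eqdef b\vee p_1\vee\dots\vee p_i$. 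Since $L$ is finite and every element is a join of the \jirr{s} below it, one checks that $a_n=a$ (the \jirr{s} below~$a$ that lie below~$b$ contribute nothing beyond~$b$). Iterating~(D1) along this descending chain yields
\[
a\sd b\;=\;\bigvee_{i=1}^{n}(a_i\sd a_{i-1})\,.
\]

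It remains to bound each summand by $p_i\sd(p_i)_*$. By~(D2), $a_i\sd a_{i-1}=(a_{i-1}\vee p_i)\sd a_{i-1}=p_i\sd(p_i\wedge a_{i-1})$. If $p_i\leq a_{i-1}$, the summand is~$0$ by~(D0); otherwise $p_i\wedge a_{i-1}<p_i$, so $p_i\wedge a_{i-1}\leq(p_i)_*$ by the maximality property recalled at the start, whence order-reversingness of~$\sd$ in the second argument gives $p_i\sd(p_i\wedge a_{i-1})\leq p_i\sd(p_i)_*$. The main obstacle here is really just bookkeeping: one must not slip into assuming distributivity (it is not needed), and one must verify that the maximum of the proper down-set of a \jirr\ element exists in a finite lattice, which is the one nontrivial ingredient powering both directions.
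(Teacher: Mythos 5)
Your proof is correct. It handles the inequality $\bigvee\vecm{p\sd p_*}{p\in\Ji{L},\ p\leq a,\ p\nleq b}\leq a\sd b$ exactly as the paper does (via $p\wedge b\leq p_*$, (D2), and the monotonicity statements of Lemma~\ref{L:TrIneq}), and it makes the same initial reduction to the case $b\leq a$. For the converse inequality the route is genuinely different in its mechanics, though the same in spirit: the paper runs an induction on~$a$, picking a lower cover~$a'$ of~$a$ above~$b$ and a \emph{minimal} element~$p$ of $\setm{x\in L}{x\leq a,\ x\nleq a'}$, which forces $a=p\vee a'$, $p_*=p\wedge a'$, and hence the exact identity $a\sd a'=p\sd p_*$ before invoking the induction hypothesis on $a'\sd b$. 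You instead build the chain $b=a_0\leq a_1\leq\dots\leq a_n=a$ by adjoining the relevant \jirr\ elements one at a time, telescope with~(D1), and only need the \emph{upper bound} $a_i\sd a_{i-1}\leq p_i\sd(p_i)_*$ for each link, which follows from~(D2) and order-reversal without any minimality argument. Your version trades the induction and the exact identification of each covering step for the (standard, and correctly justified) facts that in a finite lattice every element is the join of the \jirr\ elements below it and that $\setm{x\in L}{x<p}$ has a maximum for $p\in\Ji{L}$; both arguments work in an arbitrary finite lattice and neither uses distributivity, so the two proofs are of essentially equal strength, with yours being somewhat more direct and the paper's isolating the cleaner structural fact that each covering step $a'\prec a$ satisfies $a\sd a'=p\sd p_*$ for a suitable~$p$.
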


\begin{proof}
Since neither side of~\eqref{Eq:D(p_*,p)2D(a,b)} is affected by changing the pair $(a,b)$ to $(a,a\wedge b)$, we may assume that~$a\geq b$, and then prove~\eqref{Eq:D(p_*,p)2D(a,b)} by induction on~$a$.
The result is trivial for $a=b$ (use~(D0)).
Dealing with the induction step, suppose that $a>b$.
Pick~$a'\in L$ such that~$b\leq a'$ and~$a'$ is a lower cover of~$a$.
The set $\setm{x\in L}{x\leq a\text{ and }x\nleq a'}$ has a minimal element~$p$.
Necessarily, $p$ is \jirr\ and~$p_*\leq a'$, so $a=p\vee a'$ and $p_*=p\wedge a'$.
By~(D2), $p\sd p_*=a\sd a'$.
Moreover, by the induction hypothesis,
 \[
 a'\sd b=\bigvee\vecm{q\sd q_*}
 {q\in\Ji L\,,\ q\leq a'\,,\ q\nleq b}\,.
 \]
Using~(D1), we get
$a\sd b=(a\sd a')\vee(a'\sd b)\geq\bigvee\vecm{q\sd q_*}
 {q\in\Ji L\,,\ q\leq a\,,\ q\nleq b}$.
For the converse inequality, let $q\in\Ji L$ such that~$q\leq a$ and~$q\nleq b$.
Observing that $q\wedge b\leq q_*<q$ and $b<q\vee b\leq a$, we obtain, by using~(D2) together with the second statement of
 Lemma~\ref{L:TrIneq}, $q\sd q_*\leq q\sd(q\wedge b)=(q\vee b)\sd b\leq a\sd b$.
\end{proof}

\begin{lemma}\label{L:a1-b1leqa-b}
Let~$D$ be a distributive lattice, let~$S$ be a \jzs, and let~$\sd$ be an $S$-valued difference operation on~$D$.
Then for all $a_1,b_1,a_2,b_2\in D$, if $a_1\leq b_1\vee a_2$ and $a_1\wedge b_2\leq b_1$ \pup{within~$D$}, then $a_1\sd b_1\leq a_2\sd b_2$ \pup{within~$S$}.
\end{lemma}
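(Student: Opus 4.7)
The plan is to reduce to the finite case and then extract the inequality from Lemma~\ref{L:D(p_*,p)2D(a,b)} using join-primeness. First I would replace $D$ by the sublattice $L$ generated by the four elements $a_1,b_1,a_2,b_2$; since $D$ is distributive and $L$ has only finitely many generators (the free distributive lattice on $n$ generators is finite), $L$ is a finite distributive lattice. Because $L$ is a sublattice of $D$, the joins and meets appearing in axioms (D0)--(D2) are the same whether computed in $L$ or $D$, so the restriction of $\sd$ to $L\times L$ is an $S$-valued difference operation on the finite distributive lattice $L$, and both hypotheses $a_1\leq b_1\vee a_2$ and $a_1\wedge b_2\leq b_1$ persist in~$L$.

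Next, by Lemma~\ref{L:D(p_*,p)2D(a,b)} applied inside~$L$,
\[
a_1\sd b_1=\bigvee\Setm{p\sd p_*}{p\in\Ji L,\ p\leq a_1,\ p\nleq b_1}
\]
and analogously for $a_2\sd b_2$. Therefore it suffices to verify that every $p\in\Ji L$ satisfying $p\leq a_1$ and $p\nleq b_1$ also satisfies $p\leq a_2$ and $p\nleq b_2$, for then the join indexing $a_1\sd b_1$ is taken over a subset of the indexing set of $a_2\sd b_2$.

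Finally I would verify these two properties by invoking the join-primeness of~$p$ provided by Lemma~\ref{L:pdagger}. From $p\leq a_1\leq b_1\vee a_2$ together with $p\nleq b_1$, join-primeness forces $p\leq a_2$. If we had $p\leq b_2$, then $p\leq a_1\wedge b_2\leq b_1$, contradicting $p\nleq b_1$; hence $p\nleq b_2$.

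I do not expect a genuine obstacle here: the entire content sits in the passage to the finitely generated (hence finite) distributive sublattice, which is precisely what makes Lemma~\ref{L:D(p_*,p)2D(a,b)} available; the two hypotheses on $(a_1,b_1,a_2,b_2)$ then each translate, via join-primeness, into exactly one of the two required properties of~$p$.
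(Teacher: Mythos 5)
Your proof is correct, but it takes a genuinely different route from the paper's. The paper proves the lemma by a short direct computation: from $a_1\leq b_1\vee a_2$ and distributivity one gets $a_1\leq b_1\vee(a_1\wedge a_2)$, and then a five-line chain using only (D2) and the monotonicity statements of Lemma~\ref{L:TrIneq} yields $a_1\sd b_1\leq(a_1\wedge a_2)\sd b_1\leq(a_1\wedge a_2)\sd(a_1\wedge b_2)=(a_1\wedge a_2)\sd b_2\leq a_2\sd b_2$, with no finiteness reduction and no mention of join-irreducibles. You instead pass to the finite distributive sublattice $L$ generated by $a_1,b_1,a_2,b_2$ (legitimate: the free distributive lattice on four generators is finite, and the restriction of $\sd$ to a sublattice clearly still satisfies (D0)--(D2)), invoke Lemma~\ref{L:D(p_*,p)2D(a,b)} to write each side as a join of elements $p\sd p_*$ over join-irreducibles of~$L$, and check via join-primeness that the index set for $a_1\sd b_1$ is contained in that for $a_2\sd b_2$; both verifications ($p\leq a_2$ and $p\nleq b_2$) are done correctly. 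Your argument makes the combinatorial content visible --- the two hypotheses translate exactly into the containment of index sets --- and mirrors the way Claim~1 of Lemma~\ref{L:Suff41stepCNpure} is later argued; the cost is the detour through the finitely generated sublattice and the heavier machinery of Lemmas~\ref{L:pdagger} and~\ref{L:D(p_*,p)2D(a,b)}, whereas the paper's computation is shorter and works in $D$ directly. Since Lemma~\ref{L:D(p_*,p)2D(a,b)} itself rests on Lemma~\ref{L:TrIneq}, the two proofs ultimately draw on the same foundations.
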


\begin{proof}
{}From $a_1\leq b_1\vee a_2$ if follows that $a_1\leq b_1\vee(a_1\wedge a_2)$, thus
 \begin{align*}
 a_1\sd b_1&\leq\pI{b_1\vee(a_1\wedge a_2)}\sd b_1
 &&\text{(use Lemma~\ref{L:TrIneq})}\\
 &=(a_1\wedge a_2)\sd b_1
 &&(\text{use~(D2)})\\
 &\leq(a_1\wedge a_2)\sd(a_1\wedge b_2)
 &&\text{(by our assumptions and Lemma~\ref{L:TrIneq})}\\
 &=(a_1\wedge a_2)\sd b_2
 &&(\text{use~(D2)})\\
 &\leq a_2\sd b_2
 &&\text{(use Lemma~\ref{L:TrIneq})}.\tag*{\qed} 
 \end{align*}
\renewcommand{\qed}{}
\end{proof}

For any elements~$a$ and~$b$ in a distributive lattice~$D$, we shall set
 \begin{equation}\label{Eq:Defominus}
 a\ominus_Db\eqdef\setm{x\in D}{a\leq b\vee x}\,.
 \end{equation}
Moreover, we shall denote by $a\sd_Db$ the least element of $a\ominus_Db$ if it exists, and then call it the \emph{pseudo-difference} of~$a$ and~$b$ relatively to~$D$.
Further, we shall say that~$D$ is a \emph{generalized dual Heyting algebra} if $a\sd_Db$ exists for all $a,b\in D$.

The following lemma will be a crucial source of difference operations throughout the paper.
The proof is straightforward and we leave it to the reader.

\begin{lemma}\label{L:diffS}
For any generalized dual Heyting algebra~$S$, the operation~$\sd_S$ is an $S$-valued difference operation on~$S$.
\end{lemma}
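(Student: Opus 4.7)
The plan is to verify the three axioms (D0), (D1), (D2) directly from the definition of $a\sd_S b$ as the least element of $a\ominus_S b=\setm{x\in S}{a\leq b\vee x}$, using that a generalized dual Heyting algebra, being defined as a distributive lattice, is distributive.

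For (D0), since $x\leq x\vee 0$, we have $0\in x\ominus_S x$, whence $x\sd_S x=0$. For (D2), I would show that the three subsets $(x\vee y)\ominus_S y$, $x\ominus_S y$, and $x\ominus_S(x\wedge y)$ of~$S$ coincide, which forces equality of their least elements. The identity $(x\vee y)\ominus_S y=x\ominus_S y$ is immediate from the equivalence $x\vee y\leq y\vee z\iff x\leq y\vee z$. For the identity $x\ominus_S y=x\ominus_S(x\wedge y)$, the inclusion $\supseteq$ is trivial since $(x\wedge y)\vee z\leq y\vee z$; the reverse inclusion uses distributivity: if $x\leq y\vee z$, then
\[
x=x\wedge(y\vee z)=(x\wedge y)\vee(x\wedge z)\leq(x\wedge y)\vee z.
\]

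For (D1), suppose $x\geq y\geq z$. On the one hand,
\[
x\leq y\vee(x\sd_S y)\leq z\vee(y\sd_S z)\vee(x\sd_S y),
\]
so $(x\sd_S y)\vee(y\sd_S z)\in x\ominus_S z$, giving the inequality $x\sd_S z\leq(x\sd_S y)\vee(y\sd_S z)$. On the other hand, $x\sd_S z$ lies in both $x\ominus_S y$ (since $x\leq z\vee(x\sd_S z)\leq y\vee(x\sd_S z)$, using $z\leq y$) and $y\ominus_S z$ (since $y\leq x\leq z\vee(x\sd_S z)$), so it is a common upper bound of $x\sd_S y$ and $y\sd_S z$, and the reverse inequality follows.

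I do not anticipate a serious obstacle: every step reduces to manipulations of the defining inequality $a\leq b\vee x$, together with one invocation of distributivity in the verification of (D2). The only mild subtlety is to resist the temptation to use the already-established Lemma~\ref{L:TrIneq} or monotonicity of~$\sd$, since those results rely on (D0)--(D2) and would produce circular reasoning here.
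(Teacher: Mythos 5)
Your proof is correct; the paper itself omits the argument entirely ("the proof is straightforward and we leave it to the reader"), and your verification of (D0)--(D2) directly from the definition of $a\sd_Sb$ as the least element of $a\ominus_Sb$ is exactly the intended routine check, with distributivity entering only in the inclusion $x\ominus_Sy\subseteq x\ominus_S(x\wedge y)$. Your closing remark is also well taken: invoking Lemma~\ref{L:TrIneq} or the monotonicity of~$\sd$ here would indeed be circular, since those are derived from (D0)--(D2).
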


The two following lemmas state that the pseudo-difference operation behaves especially well in the presence of consonance.

\begin{lemma}\label{L:sdSLatHom}
The following statements hold, for every generalized dual Heyting algebra~$S$ and all $a_1,a_2,a,b_1,b_2,b\in S$:
\begin{enumerater}
\item\label{a1jja2sdSb}
$(a_1\vee a_2)\sd_Sb=(a_1\sd_Sb)\vee(a_2\sd_Sb)$.

\item\label{asdSb1mmb2}
$a\sd_S(b_1\wedge b_2)=(a\sd_Sb_1)\vee(a\sd_S b_2)$.

\item\label{a1mma2sdSb}
If $a_1\sim_Sa_2$, then $(a_1\wedge a_2)\sd_Sb=(a_1\sd_Sb)\wedge(a_2\sd_Sb)$.

\item\label{asdSb1jjb2}
If $b_1\sim_Sb_2$, then $a\sd_S(b_1\vee b_2)=(a\sd_Sb_1)\wedge(a\sd_Sb_2)$.
\end{enumerater}
\end{lemma}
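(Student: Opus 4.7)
The plan is to lean entirely on the defining property of the pseudo-difference: $a\sd_Sb$ is the least $x\in S$ with $a\leq b\vee x$. This immediately yields that $a\sd_Sb$ is isotone in~$a$ and antitone in~$b$, and it lets me check each identity by producing an upper witness and matching it against the universal lower bound.

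Parts~\eqref{a1jja2sdSb} and~\eqref{asdSb1mmb2} require no consonance. For~\eqref{a1jja2sdSb}, $a_1\vee a_2\leq b\vee x$ is equivalent to the conjunction of $a_1\leq b\vee x$ and $a_2\leq b\vee x$; hence $(a_1\sd_Sb)\vee(a_2\sd_Sb)$ is an upper witness, and the reverse inequality is pure isotonicity. Part~\eqref{asdSb1mmb2} is formally dual: distributivity in~$S$ rewrites $(b_1\wedge b_2)\vee x=(b_1\vee x)\wedge(b_2\vee x)$, so $a\leq(b_1\wedge b_2)\vee x$ reduces to the same pair $a\leq b_1\vee x$ and $a\leq b_2\vee x$.

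For~\eqref{a1mma2sdSb}, the inequality $(a_1\wedge a_2)\sd_Sb\leq(a_1\sd_Sb)\wedge(a_2\sd_Sb)$ is free of consonance: since $a_1\wedge a_2\leq a_i\leq b\vee(a_i\sd_Sb)$, distributivity in~$S$ gives $a_1\wedge a_2\leq b\vee\pI{(a_1\sd_Sb)\wedge(a_2\sd_Sb)}$. For the reverse, I would pick $u,v\in S$ witnessing $a_1\sim_Sa_2$, with $u\wedge v=0$. Distributivity applied to $a_1\leq a_2\vee u$ yields $a_1\leq(a_1\wedge a_2)\vee u$, so $a_1\leq b\vee c\vee u$ where $c\eqdef(a_1\wedge a_2)\sd_Sb$; the universal property gives $a_1\sd_Sb\leq c\vee u$, and symmetrically $a_2\sd_Sb\leq c\vee v$. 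Meeting the two bounds and invoking the distributive identity $(c\vee u)\wedge(c\vee v)=c\vee(u\wedge v)=c$ produces the desired inequality.

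Part~\eqref{asdSb1jjb2} is the exact dual. Antitonicity in the second coordinate gives $a\sd_S(b_1\vee b_2)\leq(a\sd_Sb_1)\wedge(a\sd_Sb_2)$ without consonance. For the converse, pick $u,v$ witnessing $b_1\sim_Sb_2$ with $u\wedge v=0$; then $b_2\leq b_1\vee v$ forces $b_1\vee b_2\leq b_1\vee v$, so $a\leq b_1\vee v\vee d$ with $d\eqdef a\sd_S(b_1\vee b_2)$, whence $a\sd_Sb_1\leq v\vee d$, and symmetrically $a\sd_Sb_2\leq u\vee d$. Meeting and invoking $u\wedge v=0$ closes the argument. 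The only mildly non-trivial ingredient is the consonance identity $(c\vee u)\wedge(c\vee v)=c$ in a distributive lattice when $u\wedge v=0$; beyond keeping track of which direction in each part actually uses the consonance hypothesis, I anticipate no serious obstacle.
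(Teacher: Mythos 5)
Your proof is correct in all four parts. For~(i) and~(ii), which the paper dismisses as easy exercises, your arguments from the universal property of~$\sd_S$ and distributivity are exactly what is intended. For~(iii) and~(iv) your strategy coincides with the paper's at its core --- bound each $a_i\sd_Sb$ above by $w_i\vee\pI{(a_1\wedge a_2)\sd_Sb}$ with $w_1\wedge w_2=0$, then meet the two bounds and distribute --- but you reach those bounds by a different route. The paper takes the canonical witnesses $w_1=a_1\sd_Sa_2$ and $w_2=a_2\sd_Sa_1$ and obtains $a_1\sd_Sb\leq(a_1\sd_Sa_2)\vee\pI{(a_1\wedge a_2)\sd_Sb}$ by invoking the triangle inequality of Lemma~\ref{L:TrIneq} (whose proof passes through the dimension monoid) together with~(D2) and Lemma~\ref{L:diffS}, whereas you take arbitrary consonance witnesses $u,v$ and derive $a_1\leq(a_1\wedge a_2)\vee u$, hence $a_1\sd_Sb\leq u\vee\pI{(a_1\wedge a_2)\sd_Sb}$, by a one-line distributivity computation. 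Your route is self-contained and bypasses the difference-operation machinery entirely; the paper's approach buys uniformity, since Lemmas~\ref{L:TrIneq} and~\ref{L:diffS} are needed elsewhere in any case. The two choices of witnesses are interchangeable here because, in a generalized dual Heyting algebra, $a_1\sim_Sa_2$ holds if{f} $(a_1\sd_Sa_2)\wedge(a_2\sd_Sa_1)=0$.
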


\begin{proof}
The proofs of~\eqref{a1jja2sdSb} and \eqref{asdSb1mmb2} are easy exercises.
%

\emph{Ad}~\eqref{a1mma2sdSb}.
We first compute as follows:
 \begin{align*}
 a_1\sd_Sb&\leq\pI{a_1\sd_S(a_1\wedge a_2)}
 \vee\pI{(a_1\wedge a_2)\sd_Sb}
 &&(\text{use Lemmas~\ref{L:TrIneq} and~\ref{L:diffS}})\\
 &=(a_1\sd_Sa_2)
 \vee\pI{(a_1\wedge a_2)\sd_Sb}
 &&(\text{use~(D2)}).
 \end{align*}
Symmetrically, $a_2\sd_Sb\leq(a_2\sd_Sa_1)
 \vee\pI{(a_1\wedge a_2)\sd_Sb}$.
By meeting the two inequalities, we obtain, by using the distributivity of~$S$, the following inequality:
  \[
  (a_1\sd_Sb)\wedge(a_2\sd_Sb)\leq
  \pI{(a_1\sd_Sa_2)\wedge(a_2\sd_Sa_1)}\vee
  \pI{(a_1\wedge a_2)\sd_Sb}\,.
  \]
Now our assumption $a_1\sim_Sa_2$ means that
$(a_1\sd_Sa_2)\wedge(a_2\sd_Sa_1)=0$, so we obtain the following inequality:
  \[
  (a_1\sd_Sb)\wedge(a_2\sd_Sb)\leq
  (a_1\wedge a_2)\sd_Sb\,.
  \]
The converse inequality is trivial.

The proof of~\eqref{asdSb1jjb2} is similar to the one of~\eqref{a1mma2sdSb}.
\end{proof}

\begin{lemma}\label{L:a1nsdb1nmm20}
Let~$S$ be a generalized dual Heyting algebra and let $a_1,a_2,b_1,b_2\in S$.
If $a_1\sim_Sa_2$ and $a_1\wedge a_2\leq b_1\wedge b_2$, then $(a_1\sd_Sb_1)\wedge(a_2\sd_Sb_2)=0$.
\end{lemma}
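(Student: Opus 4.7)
The plan is to exploit the consonance $a_1\sim_Sa_2$ to bound each pseudo-difference $a_i\sd_Sb_i$ inside one of the disjoint witnesses $x,y$ furnished by consonance, and then observe that their meet must vanish.

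Concretely, I would start by fixing $x,y\in S$ such that $a_1\leq a_2\vee x$, $a_2\leq a_1\vee y$, and $x\wedge y=0$, and I would write $u_i\eqdef a_i\sd_Sb_i$ for $i\in\{1,2\}$. The first observation is that, because $a_1\wedge a_2\leq b_1\wedge b_2\leq b_1$, the element $(a_1\wedge a_2)\sd_Sb_1$ vanishes (it is the least $z$ with $a_1\wedge a_2\leq b_1\vee z$, and $z=0$ already works). Applying Lemma~\ref{L:sdSLatHom}\eqref{a1mma2sdSb} to $a_1\sim_Sa_2$ with this value of $b$, we obtain
\[
0=(a_1\wedge a_2)\sd_Sb_1=(a_1\sd_Sb_1)\wedge(a_2\sd_Sb_1)=u_1\wedge(a_2\sd_Sb_1).
\]

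Next, from $a_1\leq a_2\vee x$ and Lemma~\ref{L:TrIneq} we get $u_1\leq(a_2\vee x)\sd_Sb_1$, and by Lemma~\ref{L:sdSLatHom}\eqref{a1jja2sdSb} this equals $(a_2\sd_Sb_1)\vee(x\sd_Sb_1)$. Meeting with $u_1$ and using distributivity of $S$ together with the previous display,
\[
u_1=u_1\wedge\pI{(a_2\sd_Sb_1)\vee(x\sd_Sb_1)}=\pI{u_1\wedge(a_2\sd_Sb_1)}\vee\pI{u_1\wedge(x\sd_Sb_1)}=u_1\wedge(x\sd_Sb_1).
\]
Since $x\leq b_1\vee x$, the definition of the pseudo-difference gives $x\sd_Sb_1\leq x$, whence $u_1\leq x$. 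The symmetric argument, swapping the roles of the two indices, yields $u_2\leq y$. Combining these,
\[
(a_1\sd_Sb_1)\wedge(a_2\sd_Sb_2)=u_1\wedge u_2\leq x\wedge y=0,
\]
as required.

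There is no real obstacle: once one notices that Lemma~\ref{L:sdSLatHom}\eqref{a1mma2sdSb} forces $u_1$ to be disjoint from $a_2\sd_Sb_1$, distributivity does the rest, and the symmetric step is automatic. The only subtlety worth double-checking is the very first reduction, namely that $a_1\wedge a_2\leq b_1$ and $\leq b_2$ each collapse $(a_1\wedge a_2)\sd_Sb_i$ to $0$, which is immediate from the minimality property defining $\sd_S$.
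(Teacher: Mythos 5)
Your argument is correct, but it takes a genuinely different route from the paper's. The paper's proof is a three-line computation: setting $b\eqdef b_1\wedge b_2$, the antitonicity of $\sd_S$ in its second argument (Lemma~\ref{L:TrIneq}) gives $(a_1\sd_Sb_1)\wedge(a_2\sd_Sb_2)\leq(a_1\sd_Sb)\wedge(a_2\sd_Sb)$, and a single application of Lemma~\ref{L:sdSLatHom}\eqref{a1mma2sdSb} collapses the right-hand side to $(a_1\wedge a_2)\sd_Sb=0$. You instead unpack the consonance witnesses $x,y$ and prove the sharper localizations $a_1\sd_Sb_1\leq x$ and $a_2\sd_Sb_2\leq y$; every step checks out --- the two applications of Lemma~\ref{L:sdSLatHom}\eqref{a1mma2sdSb} with subtrahends $b_1$ and $b_2$ respectively, the join decomposition via part~\eqref{a1jja2sdSb}, the distributivity computation, and the final observation that $x\sd_Sb_1\leq x$ are all sound, as is the ``subtlety'' you flag at the end (that $(a_1\wedge a_2)\sd_Sb_i=0$ follows from minimality). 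What the paper's approach buys is brevity: it never touches the witnesses, using only the fact that consonance turns $\sd_S$ into a meet-homomorphism in its first argument. What yours buys is the extra information that each pseudo-difference sits below the corresponding consonance witness, which is stronger than the stated conclusion but not needed for it.
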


\begin{proof}
Set $b\eqdef b_1\wedge b_2$.
We compute as follows:
 \begin{align*}
 (a_1\sd_Sb_1)\wedge(a_2\sd_Sb_2)&\leq
 (a_1\sd_Sb)\wedge(a_2\sd_Sb)
 &&(\text{because each }b_i\geq b)\\
 &=(a_1\wedge a_2)\sd_Sb
 &&(\text{use Lemma~\ref{L:sdSLatHom}})\\
 &=0&&(\text{by assumption})\,.\tag*{\qed}
 \end{align*}
\renewcommand{\qed}{}
\end{proof}

\begin{lemma}\label{L:GenConsonant}
Let~$D$ and~$L$ be distributive lattices, let~$E$ and~$S$ be generalized dual Heyting algebras, and let $g\colon E\to L$ be a lattice homomorphism.
We assume that~$D$ is a sublattice of~$E$, $S$ is a sublattice of~$L$, and~$g[D]$ is a consonant subset of~$S$.
Let~$\gS$ be a subset of~$D$, generating~$D$ as a lattice.
If $g(x\sd_Ey)\leq g(x)\sd_Sg(y)$ for all $x,y\in\gS$, then $g(x\sd_Ey)\leq g(x)\sd_Sg(y)$ for all $x,y\in D$.
\end{lemma}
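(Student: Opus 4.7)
The plan is a double induction on the $\gS$-term complexity of $x$ and of $y$, exploiting Lemma~\ref{L:sdSLatHom}. The crucial observation is asymmetric: parts~\eqref{a1jja2sdSb} and~\eqref{asdSb1mmb2} of that lemma hold unconditionally in any generalized dual Heyting algebra, whereas parts~\eqref{a1mma2sdSb} and~\eqref{asdSb1jjb2} require consonance---but the ``$\leq$'' halves of \eqref{a1mma2sdSb} and~\eqref{asdSb1jjb2} hold unconditionally, by the monotonicity of pseudo-difference furnished by Lemmas~\ref{L:TrIneq} and~\ref{L:diffS}. Since consonance is available only on the $S$-side (as $g[D]$ is consonant in $S$, while nothing is assumed of $E$), I shall systematically use the unconditional halves on the $E$-side and the consonance-dependent equalities on the $S$-side.

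First, fix $y\in\gS$ and prove $g(x\sd_E y)\leq g(x)\sd_S g(y)$ for all $x\in D$, by induction on the construction of $x$ from $\gS$ under $\vee$ and $\wedge$. The base case $x\in\gS$ is the hypothesis. For $x=x_1\vee x_2$, Lemma~\ref{L:sdSLatHom}\eqref{a1jja2sdSb} applied in $E$, the lattice-homomorphism property of $g$, the induction hypothesis on each $x_i$, and Lemma~\ref{L:sdSLatHom}\eqref{a1jja2sdSb} applied in $S$ chain together to give the desired inequality. For $x=x_1\wedge x_2$, monotonicity yields $(x_1\wedge x_2)\sd_E y\leq(x_1\sd_E y)\wedge(x_2\sd_E y)$ in $E$; applying $g$, using the induction hypothesis on each factor, and then invoking Lemma~\ref{L:sdSLatHom}\eqref{a1mma2sdSb} in $S$ completes the step---the hypothesis $g(x_1)\sim_S g(x_2)$ required for~\eqref{a1mma2sdSb} holds because $x_1,x_2\in D$ and $g[D]$ is consonant in $S$.

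Next, with the conclusion established for all $x\in D$ and $y\in\gS$, fix $x\in D$ and induct on the $\gS$-construction of $y\in D$. The steps mirror Step~1 verbatim: for $y=y_1\wedge y_2$, apply Lemma~\ref{L:sdSLatHom}\eqref{asdSb1mmb2} on both sides (unconditional); for $y=y_1\vee y_2$, use the monotonicity inequality $x\sd_E(y_1\vee y_2)\leq(x\sd_E y_1)\wedge(x\sd_E y_2)$ on the $E$-side, the induction hypothesis on each factor, and Lemma~\ref{L:sdSLatHom}\eqref{asdSb1jjb2} on the $S$-side (applicable because $g(y_1)\sim_S g(y_2)$, as $y_1,y_2\in D$).

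I do not foresee a genuine obstacle here; the proof is essentially bookkeeping. The only conceptual point worth isolating is the asymmetry described above: consonance of $g[D]$ in $S$ alone is enough to push the pseudo-difference calculus of $E$ across $g$ into $S$, without any consonance hypothesis on $E$ itself. This is precisely what makes the lemma a useful reduction from verifying the inequality on a generating set $\gS$ to verifying it on all of $D$.
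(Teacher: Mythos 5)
Your proof is correct and follows essentially the same route as the paper's: the paper phrases the structural induction as showing that the sets $D_x=\setm{y\in D}{g(x\sd_Ey)\leq g(x)\sd_Sg(y)}$ (and then $D'_y$) are sublattices of~$D$ containing~$\gS$, using exactly your division of labour --- unconditional monotonicity/identities of~$\sd_E$ on the $E$-side and the consonance-dependent equalities of Lemma~\ref{L:sdSLatHom} on the $S$-side. Your isolation of the asymmetry (consonance needed only in~$S$, not in~$E$) matches the paper's use of the hypothesis precisely.
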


The situation in Lemma~\ref{L:GenConsonant} is partly illustrated in Figure~\ref{Fig:ConstrSq}.

\begin{figure}[htb]
 \[
 \def\labelstyle{\displaystyle}
 \xymatrix{
 D\ar@{_(->}[rrr]
 \ar[d]^{\text{consonant}}_{g\res D} &&& E\ar[d]^g\\
 S\ar@{^(->}[rrr] &&& L
 }
 \]
\caption{Illustrating Lemma~\ref{L:GenConsonant}}
\label{Fig:ConstrSq}
\end{figure}

\begin{proof}
Let~$x\in\gS$.
We claim that the set $D_x\eqdef\setm{y\in D}{g(x\sd_Ey)\leq g(x)\sd_Sg(y)}$ is equal to~$D$.
Indeed, it follows from our assumptions that $\gS\subseteq D_x$.
For all $y_1,y_2\in D_x$,
 \begin{align*}
 g\pI{x\sd_E(y_1\vee y_2)}&
 \leq g(x\sd_Ey_1)\wedge g(x\sd_Ey_2)
 \ (\text{because }g\text{ is order-preserving})\\
 &\leq\pI{g(x)\sd_Sg(y_1)}\wedge\pI{g(x)\sd_Sg(y_2)}
 \qquad(\text{because }y_1,y_2\in D_x)\\
 &=g(x)\sd_S\pI{g(y_1)\vee g(y_2)}\\
 &\qquad\qquad(\text{because }g[D]
 \text{ is consonant in }S
 \text{ and by Lemma~\ref{L:sdSLatHom}})\\
 &=g(x)\sd_Sg(y_1\vee y_2)\quad
 (\text{because }g\text{ is a \jh})\,,
 \end{align*}
that is, $y_1\vee y_2\in D_x$.
The proof that $y_1\wedge y_2\in D_x$ is similar, although  easier since it does not require any consonance assumption.
Hence, $D_x$ is a sublattice of~$D$.
Since it contains~$\gS$, it contains~$D$; whence $D_x=D$.

This holds for all $x\in\gS$, which means that for all $y\in D$, the set
$D'_y\eqdef\setm{x\in D}{g(x\sd_Ey)\leq g(x)\sd_Sg(y)}$ contains~$\gS$.
Moreover, by an argument similar to the one used in the paragraph above, $D'_y$ is a sublattice of~$D$.
Hence, $D'_y=D$.
This holds for all $y\in D$, as required.
\end{proof}

\section{The Main Extension Lemma for distributive lattices}
\label{S:HomExt}

The key idea, of our proof of Theorem~\ref{T:ReprCN}, is the possibility of extending certain lattice homomorphisms $f\colon D\to L$, where~$D$ and~$L$ are distributive $0$-lattices with~$D$ finite and~$L$ completely normal, to finite, or countable, distributive extensions of~$D$.
The present section is mostly devoted to the required technical lattice-theoretical extension result (Lemma~\ref{L:Suff41stepCNpure}).

We first state a preparatory result.
For any lattice~$D$, denote by $D\ast\sJ_2$ the sublattice of~$D^3$ consisting of all triples $(x,y,z)$ such that $z\leq x$ and $z\leq y$.
The following lemma means that, as the notation suggests, $D\ast\sJ_2$ is, in the bounded case, the free distributive product (cf. Gr\"atzer \cite[Thm.~12.5]{GGFC}) of~$D$ with the second entry~$\sJ_2$ of \emph{Jaskowsky's sequence}, represented in Figure~\ref{Fig:J2}.
For $x,y,z\in D$, the triple $(x\vee z,y\vee z,z)\in D\ast\sJ_2$ can then be identified with $(x\wedge\va)\vee(y\wedge\vb)\vee z$.
The proof of Lemma~\ref{L:DotimesJ2} is a straightforward exercise.

\begin{figure}[htb]
\includegraphics{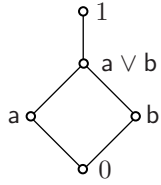}
\caption{The lattice~$\sJ_2$}\label{Fig:J2}
\end{figure}

\begin{lemma}\label{L:DotimesJ2}
Let~$D$ and~$E$ be bounded distributive lattices and let $a,b\in E$ such that $a\wedge b=0$.
Then for every $0,1$-lattice homomorphism $f\colon D\to E$, there exists a unique $0,1$-lattice homomorphism $g\colon D\ast\sJ_2\to E$ such that $g(1,0,0)=a$, $g(0,1,0)=\nobreak b$, and $g(x,x,x)=x$ whenever $x\in D$.
This map is defined by the rule
 \[
 g(x,y,z)\eqdef
 \pI{f(x)\wedge a}\vee\pI{f(y)\wedge b}\vee f(z)\,,
 \qquad\text{for all }(x,y,z)\in D\ast\sJ_2\,.
 \]
\end{lemma}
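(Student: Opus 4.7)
The plan is to exhibit $g$ by the displayed formula, verify it is a $0,1$-lattice homomorphism taking the prescribed values, and then extract uniqueness from the observation that the prescribed values already determine $g$ on a generating set of $D\ast\sJ_2$. Concretely, write $\va\eqdef(1,0,0)$, $\vb\eqdef(0,1,0)$, and $\bar{x}\eqdef(x,x,x)$ for $x\in D$. For every $(x,y,z)\in D\ast\sJ_2$, the constraints $z\leq x$ and $z\leq y$ give the decomposition
\[
(x,y,z)=(\va\wedge\bar{x})\vee(\vb\wedge\bar{y})\vee\bar{z}\,,
\]
so $\set{\va,\vb}\cup\setm{\bar{x}}{x\in D}$ generates $D\ast\sJ_2$ as a $0,1$-lattice, and any $0,1$-lattice homomorphism meeting the prescribed equations must send $(x,y,z)$ to $(f(x)\wedge a)\vee(f(y)\wedge b)\vee f(z)$. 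This settles uniqueness.

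For existence, define $g$ by that rule. Preservation of $0$ is immediate, while $g(1,1,1)=(f(1)\wedge a)\vee(f(1)\wedge b)\vee f(1)=a\vee b\vee 1_E=1_E$ gives preservation of $1$. Preservation of joins is a direct calculation using distributivity of $E$ and the fact that $f$ preserves joins. The prescribed values on $\va$, $\vb$, $\bar{x}$ are obvious from the formula (using $a,b\leq 1_E$ and $a\wedge b=0$ for the first two).

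The only step requiring care is the preservation of meets. Expanding
\[
g(x_1,y_1,z_1)\wedge g(x_2,y_2,z_2)
\]
by distributivity in $E$ produces nine terms. Two cross terms, namely $f(x_1)\wedge a\wedge f(y_2)\wedge b$ and its symmetric companion, vanish because $a\wedge b=0$. The four terms involving exactly one $f(z_i)$ together with one of $a$ or $b$ are absorbed into the surviving terms $f(x_1)\wedge f(x_2)\wedge a$ and $f(y_1)\wedge f(y_2)\wedge b$, using the relations $f(z_i)\leq f(x_i)$ and $f(z_i)\leq f(y_i)$ that come from the defining inequalities of $D\ast\sJ_2$. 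What remains is exactly
\[
\pI{f(x_1\wedge x_2)\wedge a}\vee\pI{f(y_1\wedge y_2)\wedge b}\vee f(z_1\wedge z_2)=g(x_1\wedge x_2,y_1\wedge y_2,z_1\wedge z_2)\,,
\]
so $g$ preserves meets.

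Since the author describes the lemma as a straightforward exercise, no step is genuinely difficult; the only potential pitfall is bookkeeping in the meet expansion, where one must correctly identify which terms are killed by $a\wedge b=0$ and which are absorbed via the $z\leq x,y$ constraints.
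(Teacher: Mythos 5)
Your proof is correct and is precisely the direct verification that the paper leaves to the reader as ``a straightforward exercise'': the decomposition $(x,y,z)=(\va\wedge\bar{x})\vee(\vb\wedge\bar{y})\vee\bar{z}$ for uniqueness, and the nine-term meet expansion with two terms killed by $a\wedge b=0$ and four absorbed via $z_i\leq x_i,y_i$, are exactly what is needed. (Note only that the condition $g(x,x,x)=x$ in the statement should be read as $g(x,x,x)=f(x)$, as your formula and the later application in Lemma~\ref{L:AddIndepHyp} confirm.)
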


\begin{lemma}[Main Extension Lemma]\label{L:Suff41stepCNpure}
Let~$E$ be a finite distributive lattice, let~$D$ be a $0,1$-sublattice of~$E$, and let $a,b\in E$ such that the following conditions hold:
\begin{enumerater}
\item\label{EgenDab}
$E$ is generated, as a lattice, by $D\cup\set{a,b}$.

\item\label{DsubHaE}
$D$ is a Heyting subalgebra of~$E$.

\item\label{ammb=0}
$a\wedge b=0$.

\item\label{pleqp*ab}
For all $p\in\Ji{D}$, if $p\leq p_*\vee a\vee b$, then either $p\leq p_*\vee a$ or $p\leq p_*\vee b$.

\item\label{ab2incomppq}
For all~$p,q\in\Ji{D}$, if $p\leq p_*\vee a$ and $q\leq q_*\vee b$, then~$p$ and~$q$ are incomparable.
\end{enumerater}
Let~$L$ be a generalized dual Heyting algebra and let $f\colon D\to L$ be a consonant $0$-lattice homomorphism.
For every $t\in E$, we set
 \begin{align*}
 f_*(t)&\eqdef\bigvee\vecm{f(p)\sd_Lf(p_*)}{p\in\Ji{D}
 \text{ and }p\leq p_*\vee t}\,,\\
 f^*(t)&\eqdef\bigwedge\vecm{f(x)}
 {x\in D\text{ and }t\leq x}\,.
 \end{align*}
Call a pair $(\ga,\gb)$ of elements of~$L$ \emph{$f$-admissible} if there exists a \pup{necessarily unique} lattice homomorphism $g\colon E\to L$, extending~$f$, such that $g(a)=\ga$ and $g(b)=\gb$.
Then the following statements hold:
\begin{enumeratei}
\item
$f_*(t)\leq f^*(t)$ for every $t\in E$.

\item
$f_*(a)\wedge f_*(b)=0$.

\item
The $f$-admissible pairs are exactly the pairs $(\ga,\gb)$ satisfying the inequalities $f_*(a)\leq\ga\leq f^*(a)$, $f_*(b)\leq\gb\leq f^*(b)$, and $\ga\wedge\gb=0$.
\end{enumeratei}
 
\end{lemma}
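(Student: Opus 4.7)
The plan is to dispose of~(i) and~(ii) by direct computation, and to build the extension in the sufficiency half of~(iii) as a factorisation of the map furnished by Lemma~\ref{L:DotimesJ2} through a suitable surjection onto~$E$.

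For~(i), fix $p \in \Ji{D}$ with $p \leq p_{*} \vee t$ and $x \in D$ with $t \leq x$: then $p \leq p_{*} \vee x$, join-primality (Lemma~\ref{L:pdagger}) rules out $p \leq p_{*}$, forcing $p \leq x$, and $f(p) \sd_L f(p_{*}) \leq f(p) \leq f(x)$; the inequality $f_{*}(t) \leq f^{*}(t)$ follows by taking the relevant sup and inf. For~(ii), finiteness of~$\Ji{D}$ expands $f_{*}(a) \wedge f_{*}(b)$ distributively into a finite join of meets $(f(p) \sd_L f(p_{*})) \wedge (f(q) \sd_L f(q_{*}))$ indexed by $p \leq p_{*} \vee a$ and $q \leq q_{*} \vee b$; by~\eqref{ab2incomppq} such~$p$ and~$q$ are incomparable in~$D$, hence $p \wedge q \leq p_{*} \wedge q_{*}$ and $f(p) \wedge f(q) \leq f(p_{*}) \wedge f(q_{*})$; consonance of $f[D]$ supplies $f(p) \sim_L f(q)$, and Lemma~\ref{L:a1nsdb1nmm20} annihilates each summand. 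The necessity half of~(iii) is routine: $g(a \wedge b) = 0$ forces $\alpha \wedge \beta = 0$; evaluating $g$ at any $x \geq a$ in~$D$ gives $\alpha \leq f^{*}(a)$; and for $p \leq p_{*} \vee a$ with $p \in \Ji{D}$, $f(p) = g(p) \leq f(p_{*}) \vee \alpha$ rewrites as $f(p) \sd_L f(p_{*}) \leq \alpha$, whence $f_{*}(a) \leq \alpha$; symmetrically for~$\beta$.

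The heart of the proof is the sufficiency half of~(iii). Given $(\alpha, \beta)$ with $\alpha \wedge \beta = 0$ and the stated bounds, both $\alpha$ and $\beta$ lie in the bounded interval $[0, f(1)] \subseteq L$ (since $f^{*}(a), f^{*}(b) \leq f(1)$). Lemma~\ref{L:DotimesJ2} applied inside that interval supplies a $0,1$-lattice homomorphism
\[
g_0 \colon D \ast \sJ_2 \to L, \quad g_0(x, y, z) \eqdef (f(x) \wedge \alpha) \vee (f(y) \wedge \beta) \vee f(z),
\]
satisfying $g_0(1, 0, 0) = \alpha$, $g_0(0, 1, 0) = \beta$, and $g_0(x, x, x) = f(x)$. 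In parallel, $\pi(x, y, z) \eqdef (x \wedge a) \vee (y \wedge b) \vee z$ defines a $0,1$-lattice homomorphism $\pi \colon D \ast \sJ_2 \twoheadrightarrow E$, well-defined thanks to~\eqref{ammb=0} and surjective by~\eqref{EgenDab}. The desired extension $g \colon E \to L$ then exists as the factor $g_0 = g \circ \pi$, provided $\ker \pi \subseteq \ker g_0$. Since $D \ast \sJ_2$ is a finite distributive lattice, this containment reduces, \emph{via} the Boolean correspondence between congruences and subsets of join-irreducibles, to the collapse check: for every $u \in \Ji(D \ast \sJ_2)$ with $\pi(u) = \pi(u_{*})$, also $g_0(u) = g_0(u_{*})$. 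A direct inspection gives
\[
\Ji(D \ast \sJ_2) = \Setm{(p, 0, 0),\ (0, p, 0),\ (p, p, p)}{p \in \Ji{D}},
\]
with respective lower covers $(p_{*}, 0, 0)$, $(0, p_{*}, 0)$, $(p, p, p_{*})$. For $u = (p, 0, 0)$, the $\pi$-collapse amounts to $p \wedge a \leq p_{*}$ in~$E$, i.e., $a \leq p \rightarrow_E p_{*}$; by~\eqref{DsubHaE} this is $p \rightarrow_D p_{*} = p^{\dagger}$, so $\alpha \leq f^{*}(a) \leq f(p^{\dagger})$ and $g_0(p, 0, 0) = f(p) \wedge \alpha \leq f(p \wedge p^{\dagger}) \leq f(p_{*})$, absorbed into $g_0(p_{*}, 0, 0)$; the case $u = (0, p, 0)$ is symmetric. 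For $u = (p, p, p)$, the $\pi$-collapse amounts to $p \leq p_{*} \vee a \vee b$; by~\eqref{pleqp*ab} either $p \leq p_{*} \vee a$ (whence $f(p) \sd_L f(p_{*}) \leq f_{*}(a) \leq \alpha$) or $p \leq p_{*} \vee b$ (symmetric), so $f(p) \leq f(p_{*}) \vee \alpha \vee \beta$, and distributivity rewrites this as $f(p) = f(p_{*}) \vee (f(p) \wedge \alpha) \vee (f(p) \wedge \beta) = g_0(p, p, p_{*})$.

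The main obstacle is precisely this factoring step, and each hypothesis of the lemma plays a targeted role: \eqref{EgenDab} and~\eqref{ammb=0} produce~$\pi$ as a well-defined surjection and~$g_0$ as a well-defined homomorphism; \eqref{DsubHaE} is what identifies $p \rightarrow_E p_{*}$ with the $D$-computable $p^{\dagger}$ in the off-diagonal cases; \eqref{pleqp*ab} is exactly the dichotomy used for the diagonal case; and~\eqref{ab2incomppq}, \emph{via} part~(ii), guarantees that the prescribed lower bounds $f_{*}(a)$, $f_{*}(b)$ are compatible with the constraint $\alpha \wedge \beta = 0$.
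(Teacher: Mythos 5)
Your argument is correct, and for the heart of the lemma --- the sufficiency half of statement (iii) --- it follows a genuinely different route from the paper's. (Your treatment of (i), (ii) and the necessity half of (iii) coincides with the paper's Claims~3 and~4 and the easy remark following them.) The paper also invokes $D\ast\sJ_2$ and Lemma~\ref{L:DotimesJ2}, but only at the very end, to upgrade an already constructed \jh\ to a \mh; the well-definedness of~$g$ on~$E$ is obtained there by a direct verification (its Claims~1, 2 and~5) that the assignment $(x\wedge a)\vee(y\wedge b)\vee z\mapsto(f(x)\wedge\ga)\vee(f(y)\wedge\gb)\vee f(z)$ is order-preserving, resting on the consonance-based identity~\eqref{Eq:Breakf(x)-f(y)}. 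You instead make the free product the backbone: you factor~$g_0$ through the surjection $\pi\colon D\ast\sJ_2\twoheadrightarrow E$ and reduce the required containment $\ker\pi\subseteq\ker g_0$ to the collapse of the prime quotients $[u_*,u]$ for $u\in\Ji(D\ast\sJ_2)$, via the standard isomorphism $\Con M\cong\two^{\Ji{M}}$ for a finite distributive lattice~$M$. Your inventory of $\Ji(D\ast\sJ_2)$ and of the lower covers is correct, the identification $p\rightarrow_Dp_*=p^{\dagger}$ is exactly the right way to bring Assumption~\eqref{DsubHaE} to bear, and the three collapse checks use precisely Assumptions~\eqref{DsubHaE}, \eqref{pleqp*ab} and the bounds on $(\ga,\gb)$. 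What your route buys: the verification is localized to prime quotients, the role of each hypothesis is transparent, and --- as a byproduct --- the consonance of~$f$ is never used in part~(iii) (it enters only through Lemma~\ref{L:a1nsdb1nmm20} in part~(ii)); what the paper's route buys is the reusable inequalities of its Claims~1 and~2 about~$f_*$. If you write this up, do spell out the two facts you use tersely: the reduction of $\ker\pi\subseteq\ker g_0$ to join-irreducibles (a congruence of a finite distributive lattice is the join of the principal congruences $\Theta(u_*,u)$ it collapses, and these are independent atoms of the congruence lattice), and the one-line computation showing $p\rightarrow_Dp_*=p^{\dagger}$ and $p\wedge p^{\dagger}=p_*$.
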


\begin{note}
Although the proof of our main result (viz. Theorem~\ref{T:ReprCN}) will require only the consideration of $(\ga,\gb)=(f_*(a),f_*(b))$, we keep the more general formulation, due to possible relevance to further extensions of the present work.
The proof of Lemma~\ref{L:Suff41stepCNpure} is mostly  unaffected by that increase in generality.
\end{note}

\begin{proof}
The uniqueness statement on~$g$ follows immediately from Assumption~\eqref{EgenDab}, so we need to deal only with the existence statement.

Since~$f$ is consonant, the assignment $(x,y)\mapsto f(x)\sd_Lf(y)$ defines an $L$-valued difference operation on~$D$ (use Lemma~\ref{L:diffS}).
By Lemma~\ref{L:D(p_*,p)2D(a,b)}, it follows that
 \begin{equation}\label{Eq:Breakf(x)-f(y)}
 f(x)\sd_Lf(y)=\bigvee\vecm{f(p)\sd_Lf(p_*)}{p\in\Ji{D}\,,\ 
 p\leq x\,,\ p\nleq y}\,,\quad\text{for all }x,y\in D\,.
 \end{equation}
The remainder of our proof consists mainly of a series of claims.

\setcounter{claim}{0}

\begin{claim}\label{Cl:newgagb}
For all $x,y\in D$ and all $t\in E$, $x\leq y\vee t$ implies that $f(x)\leq f(y)\vee f_*(t)$.
\end{claim}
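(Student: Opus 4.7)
The plan is to rewrite the desired inequality $f(x)\leq f(y)\vee f_*(t)$ in the equivalent form $f(x)\sd_Lf(y)\leq f_*(t)$; the equivalence holds because $L$ is a generalized dual Heyting algebra and $f(x)\sd_Lf(y)$ is by definition the least $z\in L$ with $f(x)\leq f(y)\vee z$.

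Next I would feed the quantity $f(x)\sd_Lf(y)$ into the formula~\eqref{Eq:Breakf(x)-f(y)} just established in the proof, which expresses it as a join of terms $f(p)\sd_Lf(p_*)$ indexed by $p\in\Ji{D}$ with $p\leq x$ and $p\nleq y$. The claim therefore reduces to the following pointwise statement: for every such $p$, $f(p)\sd_Lf(p_*)\leq f_*(t)$. Inspecting the definition of $f_*(t)$, this is immediate as soon as we know $p\leq p_*\vee t$, since then $f(p)\sd_Lf(p_*)$ is itself one of the joinands defining $f_*(t)$.

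The last step is the short distributive-lattice computation in $E$ that produces $p\leq p_*\vee t$. From $p\leq x\leq y\vee t$ and the distributivity of $E$,
 \[
 p\,=\,p\wedge(y\vee t)\,=\,(p\wedge y)\vee(p\wedge t).
 \]
Now $D$ is finite, being a sublattice of the finite lattice $E$; since $p\in\Ji{D}$ and $p\nleq y\in D$, the element $p\wedge y$ is a proper predecessor of $p$ inside $D$ and is therefore dominated by the lower cover $p_*$ of $p$ in $D$. Hence $p\leq p_*\vee(p\wedge t)\leq p_*\vee t$, completing the proof. I do not anticipate any real obstacle here; the only bookkeeping point is that $p_*$ always refers to the lower cover computed inside $D$, not inside $E$, which is the convention already adopted in the definition of $f_*$.
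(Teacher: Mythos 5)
Your proposal is correct and follows essentially the same route as the paper: reduce to $f(x)\sd_Lf(y)\leq f_*(t)$, break this up via~\eqref{Eq:Breakf(x)-f(y)}, and show $p\leq p_*\vee t$ for each relevant $p\in\Ji{D}$. The paper phrases that last step through $y\leq p^{\dagger}$ and the equivalence $p\leq p^{\dagger}\vee t\Leftrightarrow p\leq p_*\vee t$, whereas you carry out the underlying distributivity computation with $y$ directly; this is only a cosmetic difference.
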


\begin{cproof}
Let $x,y\in D$ such that $x\leq y\vee t$, and let $p\in\Ji{D}$ such that $p\leq x$ and $p\nleq y$.
The latter relation means that $y\leq p^{\dagger}$.
Hence, from $x\leq y\vee t$ it follows that $p\leq p^{\dagger}\vee t$, or, equivalently, $p\leq p_*\vee t$.
Therefore, by the definition of~$f_*(t)$, we get $f(p)\sd_Lf(p_*)\leq f_*(t)$.
Joining those inequalities over all possible values of~$p$ and invoking~\eqref{Eq:Breakf(x)-f(y)}, we get $f(x)\sd_Lf(y)\leq f_*(t)$ and we are done.
\end{cproof}

\begin{claim}\label{Cl:xleqyjjajjb}
Let $x,y\in D$ such that $x\leq y\vee a\vee b$.
Then $f(x)\leq f(y)\vee f_*(a)\vee f_*(b)$.
\end{claim}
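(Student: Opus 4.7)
The plan is to combine the previously established Claim~\ref{Cl:newgagb} with the splitting hypothesis~\eqref{pleqp*ab} of the Main Extension Lemma; no further technology should be needed.

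First, I would invoke Claim~\ref{Cl:newgagb} with $t\eqdef a\vee b\in E$. The hypothesis $x\leq y\vee a\vee b$ rewrites as $x\leq y\vee t$, so that Claim yields $f(x)\leq f(y)\vee f_*(a\vee b)$ immediately. The problem thus reduces to the single lattice-theoretic inequality $f_*(a\vee b)\leq f_*(a)\vee f_*(b)$ in~$L$.

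Second, I would establish that inequality by unfolding the definition of $f_*$ and proceeding summand by summand. By construction,
\[
 f_*(a\vee b)=\bigvee\vecm{f(p)\sd_Lf(p_*)}{p\in\Ji{D}\,,\ p\leq p_*\vee a\vee b}\,.
\]
For each $p\in\Ji{D}$ indexing this join, hypothesis~\eqref{pleqp*ab} forces either $p\leq p_*\vee a$ or $p\leq p_*\vee b$; accordingly the summand $f(p)\sd_Lf(p_*)$ appears in the defining join of $f_*(a)$ or in that of $f_*(b)$, and so lies below $f_*(a)\vee f_*(b)$. Joining over all admissible~$p$ closes the argument.

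I do not foresee any real obstacle here: hypothesis~\eqref{pleqp*ab} is precisely the combinatorial splitting condition engineered so that, at the level of the $\Ji{D}$-indexed summands defining~$f_*$, the pair~$\{a,b\}$ cannot mix, and $f_*$ is sub-join-preserving with respect to it. Neither the consonance of~$f$ nor any of the pseudo-difference axioms~(D0)--(D2) need to be revisited, since everything they contribute has already been absorbed into Claim~\ref{Cl:newgagb}.
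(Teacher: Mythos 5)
Your proof is correct and follows essentially the same route as the paper's: both reduce the statement to the join-irreducible summands $f(p)\sd_Lf(p_*)$ and split each of them between $f_*(a)$ and $f_*(b)$ via hypothesis~\eqref{pleqp*ab}. The only difference is organizational --- you factor the argument through Claim~\ref{Cl:newgagb} applied to $t=a\vee b$ together with the inequality $f_*(a\vee b)\leq f_*(a)\vee f_*(b)$, whereas the paper inlines the decomposition~\eqref{Eq:Breakf(x)-f(y)} directly.
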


\begin{cproof}
We must prove that $f(x)\sd_Lf(y)\leq f_*(a)\vee f_*(b)$.
By~\eqref{Eq:Breakf(x)-f(y)}, it suffices to prove that $f(p)\sd_Lf(p_*)\leq f_*(a)\vee f_*(b)$, for every $p\in\Ji{D}$ such that $p\leq x$ and $p\nleq y$.
The latter relation means that $y\leq p^{\dagger}$, thus, for any such~$p$, the inequality $p\leq p^{\dagger}\vee a\vee b$, or, equivalently, $p\leq p_*\vee a\vee b$, holds.
By Assumption~\eqref{pleqp*ab}, this implies that either $p\leq p_*\vee a$ or $p\leq p_*\vee b$.
By the definition of~$f_*(a)$ and~$f_*(b)$, this implies that either $f(p)\sd_Lf(p_*)\leq f_*(a)$ or $f(p)\sd_Lf(p_*)\leq f_*(b)$.
In both cases, $f(p)\sd_Lf(p_*)\leq f_*(a)\vee f_*(b)$.
\end{cproof}

\begin{claim}\label{Cl:ableqv}
$f_*(t)\leq f^*(t)$, for every $t\in E$.
\end{claim}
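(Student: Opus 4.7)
The plan is to unfold the definitions and reduce the inequality $f_*(t) \leq f^*(t)$ to a join-versus-meet comparison of the form: for every $p \in \Ji(D)$ with $p \leq p_* \vee t$ and every $x \in D$ with $t \leq x$, to verify that
\[
f(p) \sd_L f(p_*) \leq f(x).
\]
Once this pointwise inequality is established, joining over the admissible $p$'s on the left and meeting over the admissible $x$'s on the right yields exactly $f_*(t) \leq f^*(t)$.

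To prove the pointwise inequality, I would first combine the two hypotheses $p \leq p_* \vee t$ and $t \leq x$ to get $p \leq p_* \vee x$. Since $D$ is a finite distributive lattice (as a sublattice of the finite $E$, although actually we only need that $p$ is \jirr\ in $D$ --- by Lemma~\ref{L:pdagger} this makes $p$ join-prime in $D$), the join-primality of $p$ gives $p \leq p_*$ or $p \leq x$. The first alternative contradicts $p_* < p$, so $p \leq x$, whence $f(p) \leq f(x)$.

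Finally, observe the trivial upper bound $f(p) \sd_L f(p_*) \leq f(p)$: indeed, $f(p) \leq f(p_*) \vee f(p)$ places $f(p)$ in $f(p) \ominus_L f(p_*)$, and the pseudo-difference is by definition the least element of that set. Chaining $f(p) \sd_L f(p_*) \leq f(p) \leq f(x)$ completes the argument.

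There is essentially no obstacle here; the only point requiring care is to be sure that $p$ is genuinely join-prime in $D$ (and not merely in $E$), but this is immediate from Lemma~\ref{L:pdagger} applied to the finite distributive lattice $D$. No appeal to consonance or to any property of $a$ and $b$ is needed for this claim.
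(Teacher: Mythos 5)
Your proof is correct and follows essentially the same route as the paper: reduce to the pointwise inequality $f(p)\sd_Lf(p_*)\leq f(x)$, derive $p\leq p_*\vee x$, invoke join-primality of $p$ in the finite distributive lattice $D$ to get $p\leq x$, and conclude via $f(p)\sd_Lf(p_*)\leq f(p)\leq f(x)$. No discrepancies to report.
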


\begin{cproof}
We must prove that for all~$p\in\Ji{D}$ and all~$x\in D$ such that $p\leq p_*\vee t$ and~$t\leq x$, the inequality $f(p)\sd_Lf(p_*)\leq f(x)$ holds.
Since obviously, $p\leq p_*\vee x$, it follows, since~$p$ is join-prime in~$D$, that $p\leq x$, so we obtain the inequalities $f(p)\sd_Lf(p_*)\leq f(p)\leq f(x)$.
\end{cproof}

\begin{claim}\label{Cl:gammgb=0}
$f_*(a)\wedge f_*(b)=0$.
\end{claim}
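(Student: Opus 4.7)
The plan is to reduce the claim, \emph{via} the distributivity of~$L$, to showing that each ``atomic'' cross meet vanishes: that is, for every $p\in\Ji{D}$ with $p\leq p_*\vee a$ and every $q\in\Ji{D}$ with $q\leq q_*\vee b$, we have $(f(p)\sd_Lf(p_*))\wedge(f(q)\sd_Lf(q_*))=0$. Since~$E$, hence~$D$, is finite, both joins defining $f_*(a)$ and $f_*(b)$ are finite, so distributing the meet over them (using that~$L$ is distributive, being a generalized dual Heyting algebra) will give $f_*(a)\wedge f_*(b)$ as a finite join of such cross meets, which will then all equal~$0$.

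First I would invoke Assumption~\eqref{ab2incomppq}: for such~$p$ and~$q$, they are incomparable in~$D$. Consequently $p\wedge q<p$ and $p\wedge q<q$ in the finite lattice~$D$, so, by the defining property of a lower cover of a \jirr\ element, $p\wedge q\leq p_*$ and $p\wedge q\leq q_*$, whence $p\wedge q\leq p_*\wedge q_*$. Applying the lattice homomorphism~$f$, this yields
\[
f(p)\wedge f(q)=f(p\wedge q)\leq f(p_*)\wedge f(q_*).
\]

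Next, since~$f$ is a consonant $0$-lattice homomorphism, $f[D]$ is a consonant subset of~$L$, so $f(p)\sim_Lf(q)$. Thus both hypotheses of Lemma~\ref{L:a1nsdb1nmm20} (applied with $a_1=f(p)$, $a_2=f(q)$, $b_1=f(p_*)$, $b_2=f(q_*)$ inside the generalized dual Heyting algebra~$L$) are met, yielding
\[
\pI{f(p)\sd_Lf(p_*)}\wedge\pI{f(q)\sd_Lf(q_*)}=0,
\]
as required. Combining this with the distributivity step sketched above gives $f_*(a)\wedge f_*(b)=0$.

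There is no serious obstacle: the proof essentially packages together Assumption~\eqref{ab2incomppq}, the consonance of~$f$, and Lemma~\ref{L:a1nsdb1nmm20}. The only point needing a touch of care is confirming that the ambient meet-over-join distributivity is available for these finite joins in~$L$, which is immediate since any generalized dual Heyting algebra is distributive.
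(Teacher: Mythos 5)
Your proof is correct and follows essentially the same route as the paper's: reduce by distributivity to the cross meets $\pI{f(p)\sd_Lf(p_*)}\wedge\pI{f(q)\sd_Lf(q_*)}$, use Assumption~(v) to get incomparability (hence $p\wedge q\leq p_*\wedge q_*$, hence $f(p)\wedge f(q)\leq f(p_*)\wedge f(q_*)$ since $f$ is a \mh), and conclude with Lemma~\ref{L:a1nsdb1nmm20} together with the consonance of $f[D]$. You merely spell out a couple of steps the paper leaves implicit.
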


\begin{cproof}
It suffices to prove that for all $p,q\in\Ji{D}$ with $p\leq p_*\vee a$ and $q\leq q_*\vee b$, the relation $(f(p)\sd_Lf(p_*))\wedge(f(q)\sd_Lf(q_*))=0$ holds.
By Lemma~\ref{L:a1nsdb1nmm20}, it suffices to prove that $f(p)\wedge f(q)\leq f(p_*)\wedge f(q_*)$.
Since~$f$ is a \mh, it suffices to prove that $p\wedge q\leq p_*\wedge q_*$.
However, it follows from~\eqref{ab2incomppq} that~$p$ and~$q$ are incomparable, so this is obvious.
\end{cproof}

It is clear that every $f$-admissible pair $(\ga,\gb)$ satisfies $f_*(a)\leq\ga\leq f^*(a)$, $f_*(b)\leq\gb\leq f^*(b)$, and $\ga\wedge\gb=0$.
It thus remains to prove that conversely, every such pair $(\ga,\gb)$ is $f$-admissible.

\begin{claim}\label{Cl:extf2joinhom}
There exists a unique map $g\colon E\to L$ such that
 \begin{equation}\label{Eq:g(t)=fxfyfz}
 g\pI{(x\wedge a)\vee(y\wedge b)\vee z}=
 (f(x)\wedge\ga)\vee(f(y)\wedge\gb)\vee f(z)\,,\quad
 \text{for all }x,y,z\in D\,.
 \end{equation}
Moreover, $g(a)=\ga$, $g(b)=\gb$, and~$g$ is a \jh\ extending~$f$.
\end{claim}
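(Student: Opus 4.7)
The approach is to identify~$g$ as the factorization of a ``syntactic'' lattice homomorphism $\tilde g\colon D\ast\sJ_2\to L$ through the natural surjection $\pi\colon D\ast\sJ_2\twoheadrightarrow E$, $(x,y,z)\mapsto(x\wedge a)\vee(y\wedge b)\vee z$. The set of such ``normal forms'' in~$E$ contains $D\cup\set{a,b}$ and is closed under~$\vee$ and~$\wedge$ by distributivity together with $a\wedge b=0$; so Assumption~\eqref{EgenDab} yields the surjectivity of~$\pi$, and hence the uniqueness of~$g$. Since $\ga\wedge\gb=0$, the analogous formula $\tilde g(x,y,z)\eqdef(f(x)\wedge\ga)\vee(f(y)\wedge\gb)\vee f(z)$ defines a $0$-lattice homomorphism $\tilde g\colon D\ast\sJ_2\to L$, by the same verification as in Lemma~\ref{L:DotimesJ2}. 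The crux is then to check that $\ker(\pi)\subseteq\ker(\tilde g)$.

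Assume $(x_1\wedge a)\vee(y_1\wedge b)\vee z_1\leq(x_2\wedge a)\vee(y_2\wedge b)\vee z_2$ in~$E$, and bound each joinand of the corresponding LHS in~$L$ separately. For~$f(z_1)$, apply Claim~\ref{Cl:newgagb} with $t\eqdef(x_2\wedge a)\vee(y_2\wedge b)\vee z_2$, reducing the task to bounding~$f_*(t)$. For each $p\in\Ji D$ contributing to~$f_*(t)$, the identity $p=p_*\vee(p\wedge t)$ combined with join-irreducibility of~$p$ in~$D$ gives the trichotomy: either $p\leq z_2$, or $p\leq p_*\vee a$, or $p\leq p_*\vee b$ (the last two cases arise by collapsing the $z_2$-piece of~$p\wedge t$ below~$p_*$ and applying Assumption~\eqref{pleqp*ab} to $p\leq p_*\vee a\vee b$). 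In the case $p\leq p_*\vee a$, meeting with~$b$ and using $a\wedge b=0$ yields $p\wedge b\leq p_*$, so the ``$y_2$-piece'' of~$p\wedge t$ also drops below~$p_*$; it follows that $p\leq p_*\vee x_2$, hence $p\leq x_2$ by join-primality in~$D$, whence $f(p)\sd_Lf(p_*)\leq f(x_2)\wedge\ga$ (using $\ga\geq f_*(a)$). Summing over~$p$ gives $f_*(t)\leq(f(x_2)\wedge\ga)\vee(f(y_2)\wedge\gb)\vee f(z_2)$.

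For the joinand $f(x_1)\wedge\ga$, meeting the $E$-inequality with~$a$ collapses (via $a\wedge b=0$) to $x_1\wedge a\leq(x_2\vee z_2)\wedge a$, hence in particular $x_1\wedge a\leq x_2\vee z_2$. Since~$E$ is a Heyting algebra (finite distributive) and~$D$ is a Heyting subalgebra by~\eqref{DsubHaE}, the element $w\eqdef x_1\rightarrow_E(x_2\vee z_2)$ lies in~$D$ and satisfies $a\leq w$, so $\ga\leq f^*(a)\leq f(w)$. Combining the Heyting identity $x_1\wedge(x_1\rightarrow_Dv)=x_1\wedge v$ within~$D$ with the lattice-hom property of~$f$ then gives $f(x_1)\wedge\ga\leq f(x_1\wedge(x_2\vee z_2))\leq f(x_2)\vee f(z_2)$; together with $f(x_1)\wedge\ga\leq\ga$ and distributivity of~$L$, this yields $f(x_1)\wedge\ga\leq(f(x_2)\wedge\ga)\vee f(z_2)$, majorized by the target. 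The summand $f(y_1)\wedge\gb$ is treated symmetrically.

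Once well-definedness is secured, $g$~is a join-homomorphism as the composite of~$\tilde g$ with the inverse of the induced isomorphism $(D\ast\sJ_2)/\ker\pi\cong E$. Plugging $(z,z,z)$ into the defining formula gives $g(z)=f(z)$ for $z\in D$, so~$g$ extends~$f$; likewise $g(a)=\tilde g(1,0,0)=f(1)\wedge\ga=\ga$ (using $\ga\leq f^*(a)\leq f(1)$) and symmetrically $g(b)=\gb$. The principal obstacle is precisely the well-definedness step above, which efficiently packages Assumptions~\eqref{DsubHaE}--\eqref{ab2incomppq} together with the $f$-admissibility bounds $f_*(a)\leq\ga\leq f^*(a)$ and $f_*(b)\leq\gb\leq f^*(b)$.
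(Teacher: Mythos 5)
Your proposal is correct and follows essentially the same strategy as the paper: reduce to surjectivity of $(x,y,z)\mapsto(x\wedge a)\vee(y\wedge b)\vee z$ for uniqueness, then verify well-definedness joinand by joinand, handling the $f(x_1)\wedge\ga$ and $f(y_1)\wedge\gb$ terms exactly as the paper does, via the Heyting implication $x_1\rightarrow(x_2\vee z_2)$, Assumption~\eqref{DsubHaE}, and $\ga\leq f^*(a)$. The only divergence is in the $f(z_1)$ term: the paper splits the premise $u\leq(x\wedge a)\vee(y\wedge b)\vee z$ into four inequalities by distributivity and recombines after applying Claims~\ref{Cl:newgagb} and~\ref{Cl:xleqyjjajjb}, whereas you bound $f_*(t)$ for the composite~$t$ directly by a case analysis on the \jirr{} elements~$p$ with $p\leq p_*\vee t$ (using $p\wedge z_2\leq p_*$ when $p\nleq z_2$, Assumption~\eqref{pleqp*ab}, and $p\wedge b\leq p_*$ when $p\leq p_*\vee a$); this variant is sound and in effect re-derives the content of those two claims in combined form.
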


\begin{cproof}
By Assumptions~\eqref{EgenDab} and~\eqref{ammb=0}, every element~$t$ of~$E$ has the form $(x\wedge a)\vee(y\wedge b)\vee z$, where $x,y,z\in D$.
This implies the uniqueness statement on~$g$, and says that all we need to do is to verify that the right hand side of~\eqref{Eq:g(t)=fxfyfz} depends only on~$t$;
the map~$g$ thus defined, \emph{via}~\eqref{Eq:g(t)=fxfyfz}, would then automatically be a \jh\ extending~$f$, satisfying, by virtue of the relations $f(0)=0$,  $\ga\leq f(1)$, and $\gb\leq f(1)$, the equations~$g(a)=\ga$ and~$g(b)=\gb$.
Hence, we only need to verify that the following implications hold, for every $u\in D$:
 \begin{align}
 u\leq(x\wedge a)\vee(y\wedge b)\vee z&\Rightarrow
 f(u)\leq(f(x)\wedge\ga)\vee(f(y)\wedge\gb)\vee f(z)\,,
 \label{Eq:uleqxaybz}\\
 u\wedge a\leq
 (x\wedge a)\vee(y\wedge b)\vee z&\Rightarrow
 f(u)\wedge\ga\leq
 (f(x)\wedge\ga)\vee(f(y)\wedge\gb)\vee f(z)\,,
 \label{Eq:ualeqxaybz}\\
 u\wedge b\leq
 (x\wedge a)\vee(y\wedge b)\vee z&\Rightarrow
 f(u)\wedge\gb\leq
 (f(x)\wedge\ga)\vee(f(y)\wedge\gb)\vee f(z)\,,
 \label{Eq:ubleqxaybz}
 \end{align}
for all $u,x,y,z\in D$.
Since~$E$ is distributive, the premise of~\eqref{Eq:uleqxaybz} is equivalent to the conjunction of the following inequalities:
 \begin{align*}
 u&\leq x\vee y\vee z\,;\\
 u&\leq x\vee b\vee z\,;\\
 u&\leq a\vee y\vee z\,;\\
 u&\leq a\vee b\vee z\,. 
 \end{align*}
Since~$f$ is a \jh\ and by Claims~\ref{Cl:newgagb} and~\ref{Cl:xleqyjjajjb}, together with the inequalities $f_*(a)\leq\ga$ and $f_*(b)\leq\gb$, those inequalities imply the following inequalities:
 \begin{align*}
 f(u)&\leq f(x)\vee f(y)\vee f(z)\,;\\
 f(u)&\leq f(x)\vee\gb\vee f(z)\,;\\
 f(u)&\leq\ga\vee f(y)\vee f(z)\,;\\
 f(u)&\leq\ga\vee\gb\vee f(z)\,. 
 \end{align*}
Since~$L$ is distributive, this implies, by reversing the argument above, the inequality $f(u)\leq (f(x)\wedge\ga)\vee(f(y)\wedge\gb)\vee f(z)$, thus completing the proof of~\eqref{Eq:uleqxaybz}.

Further, since~$E$ is distributive and since $a\wedge b=0$, the premise of~\eqref{Eq:ualeqxaybz} is equivalent to 
the inequality $u\wedge a\leq(x\wedge a)\vee z$, thus to the inequality $u\wedge a\leq x\vee z$, which can be written $a\leq(u\rightarrow_E(x\vee z))$.
By Assumption~\eqref{DsubHaE}, this is equivalent to $a\leq v$, where we set $v\eqdef(u\rightarrow_D(x\vee z))$.
Since $\ga\leq f^*(a)$, this implies that $\ga\leq f(v)$.
Hence, $f(u)\wedge\ga\leq f(u)\wedge f(v)=f(u\wedge v)\leq f(x\vee z)=f(x)\vee f(z)$.
Since~$L$ is distributive, this implies in turn that
 \[
 f(u)\wedge\ga\leq(f(x)\wedge\ga)\vee f(z)\leq
 (f(x)\wedge\ga)\vee(f(y)\wedge\gb)\vee f(z)\,
 \]
thus completing the proof of~\eqref{Eq:ualeqxaybz}.
The proof of~\eqref{Eq:ubleqxaybz} is symmetric.
\end{cproof}

In order to conclude the proof of Lemma~\ref{L:Suff41stepCNpure}, it is sufficient to prove that~$g$ is a \mh.
By Assumption~\eqref{ammb=0} and since $\ga\wedge\gb=0$, respectively, it follows from Lemma~\ref{L:DotimesJ2} that there are unique lattice homomorphisms $d\colon D\ast\sJ_2\to E$ and $\gd\colon D\ast\sJ_2\to L$ such that
 \[
 d(x,y,z)=(x\wedge a)\vee(y\wedge b)\vee z\text{ and }
 \gd(x,y,z)=\pI{f(x)\wedge a}\vee\pI{f(y)\wedge b}\vee f(z)
 \]
for all $(x,y,z)\in D\ast\sJ_2$.
Then Claim~\ref{Cl:extf2joinhom} implies that $\gd=g\circ d$.
Moreover, it follows from Assumptions~\eqref{EgenDab} and~\eqref{ammb=0} that~$d$ is surjective.
Now any two elements of~$E$ have the form~$d(t_1)$ and~$d(t_2)$, where $t_1,t_2\in D\ast\sJ_2$, and
 \[
 g(d(t_1))\wedge g(d(t_2))=\gd(t_1)\wedge\gd(t_2)
 =\gd(t_1\wedge t_2)=g(d(t_1\wedge t_2))\leq
  g(d(t_1)\wedge d(t_2))\,.
 \]
The converse inequality
$g(d(t_1)\wedge d(t_2))\leq g(d(t_1))\wedge g(d(t_2))$ is trivial.
\end{proof}

\section{Lattices of convex open polyhedral cones}
\label{S:CxOp}

Throughout this section we shall fix a real topological vector space~$\EE$.
Denote by~$\tin(A)$ and~$\tcl(A)$ the interior and closure of a subset~$A$, respectively.
We begin with two preparatory lemmas.

\begin{lemma}\label{L:cl(AiiF)}
Let~$A$ and~$F$ be convex subsets in~$\EE$, with~$F$ closed and $F\cap\tin(A)\neq\es$.
Then $\tcl(F\cap A)=F\cap\tcl(A)$.
\end{lemma}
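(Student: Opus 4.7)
The plan is to prove the two inclusions $\tcl(F\cap A)\subseteq F\cap\tcl(A)$ and $F\cap\tcl(A)\subseteq\tcl(F\cap A)$ separately, the first one being routine and the second relying on the classical \emph{line segment principle} for convex sets in a topological vector space.

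For the first inclusion, I would simply observe that $F\cap A\subseteq F\cap\tcl(A)$, and that the right-hand side is closed (as an intersection of the closed set $F$ with a closed set). Taking the closure of the left-hand side preserves the inclusion, yielding $\tcl(F\cap A)\subseteq F\cap\tcl(A)$.

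For the reverse inclusion, fix $x\in F\cap\tcl(A)$ and any $y\in F\cap\tin(A)$ (which exists by assumption). I would then invoke the line segment principle: for any convex subset $A$ of~$\EE$, if $y\in\tin(A)$ and $x\in\tcl(A)$, then $(1-t)y+tx\in\tin(A)$ for every $t\in[0,1)$. Setting $z_t\eqdef(1-t)y+tx$, convexity of $F$ (together with $x,y\in F$) yields $z_t\in F$, while the line segment principle yields $z_t\in\tin(A)\subseteq A$; hence $z_t\in F\cap A$. Letting $t\to 1$, the net $z_t$ converges to~$x$, proving that $x\in\tcl(F\cap A)$.

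The only mildly delicate point is the invocation of the line segment principle in the generality of an arbitrary topological vector space (rather than, say, a finite-dimensional or normed space). I would either cite a standard reference (e.g., Rudin's \emph{Functional Analysis} or Bourbaki's \emph{Espaces Vectoriels Topologiques}) or, if it is judged cleaner, prove it inline by the standard argument: given an open neighborhood~$U$ of $0$ with $y+U\subseteq A$, one uses continuity of scalar multiplication and addition to find, for each $t\in[0,1)$, an open neighborhood~$V$ of $0$ with $z_t+V\subseteq((1-t)y+tx)+V\subseteq(1-t)(y+U)+t(x+W)$ for some open $W$, where one exploits $x\in\tcl(A)$ to find points of $A$ in $x+W$, thereby producing points of $A$ in every neighborhood of~$z_t$, and concluding by convexity that $z_t\in\tin(A)$. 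No step requires any new machinery beyond basic properties of topological vector spaces and convexity.
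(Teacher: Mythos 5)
Your proof is correct and follows essentially the same route as the paper: both prove the nontrivial inclusion by fixing a point of $F\cap\tin(A)$, moving along the segment toward the given point of $F\cap\tcl(A)$, and using convexity of $F$ together with the line segment principle for $A$. The only difference is presentational --- the paper asserts $(1-\gl)p+\gl u\in A$ directly from convexity without isolating the line segment principle, whereas you make that step explicit; this is if anything a slight improvement in rigor.
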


\begin{proof}
Fix $u\in F\cap\tin(A)$, and let $p\in F\cap\tcl(A)$.
Since~$F$ and~$A$ are both convex, $(1-\gl)p+\gl u\in F\cap A$ for each $\gl\in\oc{0,1}$.
Since $(1-\gl)p+\gl u$ converges to~$p$, as~$\gl$ goes to~$0$ and~$\gl>0$, it follows that $p\in\tcl(F\cap A)$.
We have thus proved that $F\cap\tcl(A)\subseteq\tcl(F\cap A)$.
The converse containment is trivial.
\end{proof}

\begin{lemma}\label{L:QmmHnwd}
Let~$F$ be the union of finitely many closed subspaces in~$\EE$ and let~$Q$ be a convex subset of~$\EE$.
Then either $Q\subseteq F$ or~$Q\cap F$ is nowhere dense in~$Q$.
\end{lemma}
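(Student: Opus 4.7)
The plan is to prove the contrapositive: assuming $Q\not\subseteq F$, I will show that $Q\cap F$ has empty interior in~$Q$ (endowed with the subspace topology). Since $F=H_1\cup\cdots\cup H_n$ is a finite union of closed sets, $Q\cap F$ is closed in~$Q$, so empty relative interior will automatically imply nowhere density. Suppose, for a contradiction, that some nonempty relatively open $U\subseteq Q$ is contained in~$F$, and fix $u\in U$ together with $y\in Q\sd F$.

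By convexity of $Q$, the segment $[u,y]$ lies in~$Q$; by relative openness of~$U$, there is $\eps\in\oo{0,1}$ such that $u+t(y-u)\in U\subseteq F$ for every $t\in[0,\eps]$. The key observation is that the sets
\[
P_i\eqdef\setm{t\in[0,\eps]}{u+t(y-u)\in H_i}\,,\quad 1\leq i\leq n\,,
\]
cover the uncountable interval $[0,\eps]$, so some $P_i$ must contain two distinct points $t_1\neq t_2$. Since $H_i$ is a subspace, subtraction yields $(t_1-t_2)(y-u)\in H_i$, hence $y-u\in H_i$, and then $u=\pI{u+t_1(y-u)}-t_1(y-u)\in H_i$; consequently $y=u+(y-u)\in H_i\subseteq F$, contradicting $y\notin F$.

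I do not foresee any substantial obstacle. The only step requiring care is the use that $H_i$ is closed under suitable linear combinations of its own elements, which is precisely the content of the hypothesis ``subspace''; the topological closedness of the~$H_i$ is used only at the outset, to reduce nowhere density to empty interior. The same scheme in fact goes through verbatim for closed \emph{affine} subspaces, since two distinct points of the line $\setm{u+t(y-u)}{t\in\RR}$ lying in an affine subspace force the entire line (and in particular~$y$) to lie in it.
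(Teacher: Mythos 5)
Your proof is correct. The core mechanism is the same as the paper's: use convexity to place a small initial portion of the segment from an interior point of $Q\cap F$ to an arbitrary point of $Q$ inside $F$, then use the linear-subspace structure (two distinct points of a line lying in a subspace force the whole line into it) to propagate membership. The one organizational difference is in how the finite union is handled: the paper first proves the statement for a single closed subspace and then invokes the fact that a finite union of nowhere dense sets is nowhere dense, whereas you keep the union intact and instead apply a pigeonhole argument to the (infinite) parameter interval $[0,\eps]$ to find two distinct points of the segment in the same $H_i$. Both routes are equally short; the paper's reduction is slightly more modular, while your pigeonhole avoids the (easy) auxiliary fact about unions of nowhere dense sets. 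Your closing observation that topological closedness of the $H_i$ is used only to convert ``empty relative interior'' into ``nowhere dense,'' and that the argument works verbatim for closed affine subspaces, is accurate.
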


\begin{proof}
We first deal with the case where~$F$ is a closed subspace of~$\EE$.
Suppose that~$Q\cap F$ is not nowhere dense in~$Q$.
Since~$F$ is a closed subspace of~$\EE$, $Q\cap F$ is relatively closed in~$Q$, thus the relative interior~$U$ of~$Q\cap F$ in~$Q$ is nonempty.
Fix $u\in U$ and let $q\in Q$.
Since~$Q$ is convex, $(1-\gl)u+\gl q\in Q$ for every $\gl\in[0,1]$.
Since~$U$ is a relative neighborhood of~$u$ in~$Q$, it follows that $(1-\gl)u+\gl q$ belongs to~$U$, thus to~$F$, for some $\gl\in\oc{0,1}$.
Since $\set{u,(1-\gl)u+\gl q}\subseteq F$ with $\gl>0$, it follows that $q\in F$, therefore completing the proof that $Q\subseteq F$.

In the general case, $F=\bigcup_{i=1}^nF_i$, where each~$F_i$ is a closed subspace of~$\EE$.
If $Q\not\subseteq F$, then $Q\not\subseteq F_i$, thus, by the paragraph above, $Q\cap F_i$ is nowhere dense in~$Q$ whenever $1\leq i\leq n$; whence $Q\cap F=\bigcup_{i=1}^n(Q\cap F_i)$ is also nowhere dense in~$Q$.
\end{proof}

{}From now on, for any closed hyperplane~$H$ of~$\EE$, we shall denote by~$H^+$ and~$H^-$ the open half-spaces with boundary~$H$, with associated closed half-spaces $\ol{H}^+\eqdef\tcl(H^+)$ and $\ol{H}^-\eqdef\tcl(H^-)$.

\begin{notation}\label{Not:BoolOpcH}
For a set~$\cH$ of closed hyperplanes of~$\EE$, we will set
 \begin{align*}
 \gS_{\cH}&\eqdef\setm{H^+}{H\in\cH}\cup
 \setm{H^-}{H\in\cH}\,,\\
 \ol{\gS}_{\cH}&\eqdef\setm{\ol{H}^+}{H\in\cH}\cup
 \setm{\ol{H}^-}{H\in\cH}\,.
 \end{align*}
Furthermore, we will denote by~$\Bool(\cH)$ the Boolean algebra of subsets of~$\EE$ generated by $\gS_{\cH}$ (equivalently, by~$\ol{\gS}_{\cH}$), and by~$\Clos(\cH)$ (resp., $\Op(\cH)$) the lattice of all closed (resp., open) members of~$\Bool(\cH)$.
\end{notation}

Trivially, $\Clos(\cH)$ and~$\Op(\cH)$ are both $0,1$-sublattices of~$\Bool(\cH)$, which is a $0,1$-sublattice of the powerset lattice of~$\EE$.
For the remainder of this section we shall fix a nonempty%
\footnote{Lemma~\ref{L:CharOpPH} does not extend to $\cH=\es$, for $\Op(\es)=\set{\es,\EE}$ while $\gS_{\es}=\es$.}
set~$\cH$ of closed hyperplanes of~$\EE$ through the origin.

\begin{lemma}\label{L:CharOpPH}
For every $X\in\Bool(\cH)$, the subsets~$\tcl(X)$ and~$\tin(X)$ both belong to $\Bool(\cH)$.
Moreover, $\Op(\cH)$ is generated, as a lattice, by
$\gS_{\cH}\cup\set{\EE}$, and it is Heyting subalgebra of the Heyting algebra~$\cO(\EE)$ of all open subsets of~$\EE$.
\end{lemma}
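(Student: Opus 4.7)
The plan rests on a cell decomposition. For each finite $\cH_0\subseteq\cH$ and each sign vector $\epsilon\colon\cH_0\to\set{+,-,0}$, let $C_\epsilon\eqdef\bigcap_{H\in\cH_0}H^{\epsilon(H)}$ with the convention $H^0\eqdef H$. The nonempty $C_\epsilon$'s partition $\EE$ and each member of~$\Bool(\cH_0)$ is a finite disjoint union of cells; since $\Bool(\cH)=\bigcup_{\cH_0\text{ finite}}\Bool(\cH_0)$, it will be enough to work with a fixed finite~$\cH_0$.

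First I would prove that if $C_\epsilon\neq\es$, then $\tcl(C_\epsilon)=\bigcap_{H\in\cH_0}\ol{H^{\epsilon(H)}}$ (with $\ol{H^0}=H$). The inclusion~$\subseteq$ is immediate, and for the converse I would fix $c\in C_\epsilon$ and, for any~$y$ in the right-hand side, check that the segment $y_\gl\eqdef(1-\gl)y+\gl c$ lies in $C_\epsilon$ for every $\gl\in\oc{0,1}$, since at each~$H$ the segment either enters the open half-space strictly once $\gl>0$ or stays on~$H$. Because $\ol{H^+}$, $\ol{H^-}$, and~$H$ all lie in~$\Bool(\cH)$, a finite union of such closures yields $\tcl(X)\in\Bool(\cH)$ for every $X\in\Bool(\cH)$, and then $\tin(X)=\EE\setminus\tcl(\EE\setminus X)\in\Bool(\cH)$ by complementation. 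The Heyting implication in~$\cO(\EE)$ is $U\to V=\tin\pI{(\EE\setminus U)\cup V}$, so for $U,V\in\Op(\cH)$ the result lies in~$\Bool(\cH)$ by the above and, being open, in~$\Op(\cH)$; this gives the Heyting subalgebra assertion.

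It remains to prove the generation assertion, which is where the main obstacle lies. Let~$L$ denote the sublattice of~$\Bool(\cH)$ generated by~$\gS_\cH\cup\set{\EE}$; the inclusion $L\subseteq\Op(\cH)$ is clear. For the converse, I would fix $X\in\Op(\cH)$, a finite~$\cH_0$ with $X\in\Bool(\cH_0)$, and the unique set~$S$ of signatures with $X=\bigsqcup_{\epsilon\in S}C_\epsilon$. The key combinatorial claim is that~$S$ is upward closed in the face order, where $\epsilon\preceq\epsilon'$ means that $\epsilon(H)\in\set{0,\epsilon'(H)}$ for all~$H$: given $\epsilon\in S$, $\epsilon'\succeq\epsilon$ with $C_{\epsilon'}\neq\es$, a point $x\in C_\epsilon$, and a ball $B(x,r)\subseteq X$, any $z\in C_{\epsilon'}$ satisfies $(1-\mu)x+\mu z\in C_{\epsilon'}$ for every $\mu\in\oc{0,1}$, and the tail of this segment enters~$B(x,r)$, forcing $C_{\epsilon'}\subseteq X$ and hence $\epsilon'\in S$. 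Setting $V_\epsilon\eqdef\bigcap_{H\colon\epsilon(H)\neq 0}H^{\epsilon(H)}$, upward closedness gives $V_\epsilon=\bigsqcup_{\epsilon'\succeq\epsilon}C_{\epsilon'}\subseteq X$ whenever $\epsilon\in S$, so $X=\bigcup_{\epsilon\in S}V_\epsilon$. If the all-zero signature lies in~$S$, then the corresponding $V_\epsilon=\EE$ and $X=\EE\in L$; otherwise every~$V_\epsilon$ is a nonempty finite meet of members of~$\gS_\cH$, so $X\in L$ in both cases. The delicate step is this upward closedness, which is precisely what ties the topological openness of~$X$ to the combinatorial structure of its cell decomposition.
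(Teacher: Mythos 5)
Your argument is correct, but it takes a genuinely different route from the paper's, most visibly for the generation statement. The paper writes each member of~$\Bool(\cH)$ in disjunctive normal form as a finite union of sets $Q\setminus F$, with~$Q$ a finite intersection of closed half-spaces and~$F$ a finite union of members of~$\cH$, and invokes Lemma~\ref{L:QmmHnwd} to get $\tcl(Q\setminus F)\in\set{\es,Q}$; this shows that closures of members of~$\Bool(\cH)$ lie in the sublattice generated by $\ol{\gS}_{\cH}\cup\set{\es}$, hence that~$\Clos(\cH)$ equals that sublattice, and the generation statement for~$\Op(\cH)$ then falls out by taking complements. You instead refine to the cell decomposition indexed by sign vectors in $\set{+,-,0}^{\cH_0}$, compute $\tcl(C_\epsilon)$ exactly by the same segment trick that underlies Lemmas~\ref{L:cl(AiiF)} and~\ref{L:QmmHnwd}, and then prove the generation statement directly, by showing that the signature set of an open union of cells is upward closed in the face order, whence $X=\bigcup_{\epsilon\in S}V_\epsilon$ with each~$V_\epsilon$ a finite intersection of open half-spaces (or~$\EE$). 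Your route is longer but more informative: it exhibits every member of~$\Op(\cH)$ explicitly as a union of basic open sets attached to the cells it contains, which in effect re-proves what the paper later records as Corollary~\ref{C:JirrOpPH}. Two cosmetic repairs: $\EE$ is only assumed to be a topological vector space, so the ball $B(x,r)$ should be replaced by an arbitrary neighborhood of~$x$ contained in~$X$ (the segment $(1-\mu)x+\mu z$ still converges to~$x$ as $\mu\to0^+$ by continuity of the vector space operations); and in the degenerate case $S=\es$ you should note that $\es=H^+\cap H^-$ lies in your lattice~$L$ for any $H\in\cH$, which uses the standing hypothesis $\cH\neq\es$ under which the lemma is stated.
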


\begin{proof}
For the duration of the proof, we shall denote by~$\Clos'(\cH)$ (resp., $\Op'(\cH)$) the sublattice of~$\Bool(\cH)$ generated by~$\ol{\gS}_{\cH}\cup\set{\es}$ (resp., $\gS_{\cH}\cup\set{\EE}$).

We first prove that the closure of any member of~$\Bool(\cH)$ belongs to~$\Clos'(\cH)$.
Writing the elements of~$\Bool(\cH)$ in disjunctive normal form, we see that every element of~$\Bool(\cH)$ is a finite union of finite intersections of open half-spaces and closed half-spaces with boundaries in~$\cH$.
Since $H^{\gs}=\ol{H}^{\gs}\setminus H$ for all $H\in\cH$ and all $\gs\in\set{+,-}$, it follows that every element of~$\Bool(\cH)$ is a finite union of sets of the form $Q\setminus F$, where~$Q$ is a finite intersection of closed half-spaces with boundaries in~$\cH$ and~$F$ is a finite union of members of~$\cH$.
Since the closure operator commutes with finite unions, the first statement of Lemma~\ref{L:CharOpPH} thus reduces to verifying that $\tcl(Q\setminus F)$ belongs to~$\Clos'(\cH)$, for any~$Q$ and~$F$ as above.
Now this follows from Lemma~\ref{L:QmmHnwd}:
if $Q\subseteq F$ then $\tcl(Q\setminus F)=\es$, and if $Q\not\subseteq F$, then $Q\cap F$ is nowhere dense in~$Q$, thus $\tcl(Q\setminus F)=\tcl(Q)=Q$.
The statement about the closure follows; in particular, $\Clos'(\cH)=\Clos(\cH)$.
By taking complements, the statement about the interior follows;
in particular, $\Op'(\cH)=\Op(\cH)$.

For all $X,Y\in\Op(\cH)$, the Heyting residue $X\rightarrow Y$, evaluated within the lattice~$\cO(\EE)$ of all open subsets of~$\EE$, is equal to $\tin\pI{(\complement X)\cup Y}$ (where~$\complement$ denotes the complement in~$\EE$), thus, as $(\complement X)\cup Y$ belongs to $\Bool(\cH)$ and by the paragraph above, it belongs to $\Op(\cH)$.
\end{proof}

In particular, the members of~$\Op(\cH)$ are \emph{open polyhedral cones}, that is, finite unions of finite intersections of open half-spaces of~$\EE$.
Lemma~\ref{L:CharOpPH} also says that the topology on~$\EE$ could be, in principle, omitted from the study of~$\Bool(\cH)$ and~$\Op(\cH)$.

%
Define a \emph{basic open member of~$\Op(\cH)$} as a nonempty finite intersection of open half-spaces with boundaries in~$\cH$.
In particular, the intersection of the empty collection yields the basic open set~$\EE$.
Since every element of~$\Op(\cH)$ is a finite union of basic open sets, we obtain the following.

\begin{corollary}\label{C:JirrOpPH}
Every \jirr\ element of~$\Op(\cH)$ is basic open.
In particular, it is convex.
\end{corollary}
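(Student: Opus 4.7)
The plan is to combine the observation stated just before the corollary (every element of $\Op(\cH)$ is a finite union of basic open sets) with the definition of \jirry.

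Let $U \in \Op(\cH)$ be \jirr. First I would write $U = U_1 \cup \cdots \cup U_n$ where each $U_i$ is a basic open member of $\Op(\cH)$; this decomposition is available by the remark preceding the corollary. Since $U$ is \jirr, it is in particular nonzero, so $n \geq 1$ and at least one $U_i$ is nonempty. A straightforward induction on $n$, using the fact that $U = (U_1 \cup \cdots \cup U_{n-1}) \vee U_n$ within the lattice $\Op(\cH)$, together with \jirry\ of $U$, shows that $U = U_i$ for some~$i$ with $U_i \neq \es$. Hence $U$ is basic open.

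For the convexity statement, I would unfold the definition: a basic open set is either $\EE$ (which is convex) or a nonempty finite intersection $\bigcap_{j=1}^{m} H_j^{\gs_j}$ of open half-spaces $H_j^{\gs_j}$ with $H_j \in \cH$ and $\gs_j \in \set{+,-}$. Each open half-space $H_j^{\gs_j}$ is convex, and an intersection of convex subsets of $\EE$ is convex; therefore $U$ is convex.

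There is no real obstacle here: the decomposition into basic open sets has already been established in the preceding paragraph (as a consequence of Lemma~\ref{L:CharOpPH}, which shows that $\Op(\cH)$ is generated as a lattice by $\gS_{\cH} \cup \set{\EE}$, so that every element is a join of meets of generators, i.e.\ a finite union of basic open sets). The whole argument is then a one-line application of \jirry\ followed by the elementary fact that intersections of convex sets are convex.
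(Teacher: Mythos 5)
Your proof is correct and follows exactly the paper's intended argument: the paper derives the corollary in one line from the observation that every element of $\Op(\cH)$ is a finite union of basic open sets, which is precisely your decomposition, followed by the same appeal to \jirry\ and the convexity of intersections of open half-spaces. Nothing to add.
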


It is easy to find examples showing that the converse of Corollary~\ref{C:JirrOpPH} does not hold:
\emph{a basic open member of~$\Op(\cH)$ may not be \jirr}.

\begin{corollary}\label{C:ChangeH}
Let~$H$ be a closed hyperplane of~$\EE$, with associated open half spaces~$H^+$ and~$H^-$.
Then the members of~$\Op(\cH\cup\set{H})$ are exactly the sets of the form $(X\cap H^+)\cup(Y\cap H^-)\cup Z$, where~$X,Y,Z\in\Op(\cH)$;
moreover, one can take~$Z\subseteq X$ and~$Z\subseteq Y$.
\end{corollary}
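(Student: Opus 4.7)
The plan is to handle the two containments separately. The backward direction is immediate: any set $(X\cap H^+)\cup(Y\cap H^-)\cup Z$ with $X,Y,Z\in\Op(\cH)$ belongs to $\Op(\cH\cup\set{H})$, since $\Op(\cH)\subseteq\Op(\cH\cup\set{H})$, both $H^+$ and $H^-$ lie in $\Op(\cH\cup\set{H})$, and this lattice is closed under binary meets and joins. So the substance of the argument is to show that every member of $\Op(\cH\cup\set{H})$ admits such a representation, with the additional containments $Z\subseteq X$ and $Z\subseteq Y$.

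For the forward direction, I will introduce the class $\cF$ of all subsets of $\EE$ of the stated form with $Z\subseteq X\cap Y$. By Lemma~\ref{L:CharOpPH} applied to $\cH\cup\set{H}$, the lattice $\Op(\cH\cup\set{H})$ is generated by $\gS_{\cH}\cup\set{H^+,H^-,\EE}$. It therefore suffices to check that $\cF$ contains each such generator and is closed under binary unions and intersections.

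The generator check is routine: $\EE$ arises by taking $X=Y=Z=\EE$; $H^+$ by taking $X=\EE$, $Y=Z=\es$ (both belong to $\Op(\cH)$, which is a $0,1$-sublattice); $H^-$ by symmetry; and any $A\in\gS_{\cH}$ by taking $X=Y=Z=A$. Closure under binary union is also direct: for $U_i=(X_i\cap H^+)\cup(Y_i\cap H^-)\cup Z_i$ with $Z_i\subseteq X_i\cap Y_i$, the union $U_1\cup U_2$ has the same form with $X_1\cup X_2$, $Y_1\cup Y_2$, $Z_1\cup Z_2$, and the containment conditions transfer componentwise.

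The main step, and the one where the extra condition $Z\subseteq X\cap Y$ earns its keep, is closure under binary intersection. Distributing $U_1\cap U_2$ yields nine cross-terms; the four involving the factor $H^+\cap H^-=\es$ vanish. Among the remaining terms, the mixed pieces $X_1\cap Z_2\cap H^+$ and $Z_1\cap X_2\cap H^+$ are absorbed into $(X_1\cap X_2)\cap H^+$ by virtue of $Z_2\subseteq X_2$ and $Z_1\subseteq X_1$; symmetrically on the $H^-$ side. What remains collapses to $((X_1\cap X_2)\cap H^+)\cup((Y_1\cap Y_2)\cap H^-)\cup(Z_1\cap Z_2)$, which again satisfies the required containment. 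This simultaneously establishes the existence of the representation and yields the additional conclusion of the ``moreover'' clause.
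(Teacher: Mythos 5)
Your proof is correct, and it takes a different route from the paper's. The paper argues via normal forms: by Lemma~\ref{L:CharOpPH}, every member of $\Op(\cH\cup\set{H})$ is a finite union of basic open sets, and each basic open set of $\Op(\cH\cup\set{H})$ is of the form $T\cap H^+$, $T\cap H^-$, or $T$ with $T$ basic open in $\Op(\cH)$; grouping the union by these three types gives the representation at once, and the ``moreover'' clause is obtained afterwards by replacing $X$ with $X\cup Z$ and $Y$ with $Y\cup Z$. You instead run a closure argument: you build the constraint $Z\subseteq X\cap Y$ into the class $\cF$ from the start, verify that $\cF$ contains the generators $\gS_{\cH}\cup\set{H^+,H^-,\EE}$ supplied by Lemma~\ref{L:CharOpPH}, and check stability under $\cup$ and $\cap$, the latter being the only step with real content. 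Both arguments lean on the same lemma; yours trades the paper's one-line appeal to the shape of basic open sets for an explicit nine-term distributivity computation, but in exchange it also records the backward inclusion explicitly and delivers the ``moreover'' clause as an invariant rather than a patch. One trivial slip: when you distribute $U_1\cap U_2$, only \emph{two} of the nine cross-terms contain the factor $H^+\cap H^-=\es$ (namely $(X_1\cap H^+)\cap(Y_2\cap H^-)$ and $(Y_1\cap H^-)\cap(X_2\cap H^+)$), not four; your subsequent accounting of the seven surviving terms is consistent with this, so nothing else needs to change.
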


\begin{proof}
For every basic open set~$U$ in~$\Op(\cH\cup\set{H})$, there is a basic open set~$T$ in~$\Op(\cH)$ such that $U=T\cap H^+$ or $U=T\cap H^-$ or $U=T$.
By Lemma~\ref{L:CharOpPH}, every element of~$\Op(\cH\cup\set{H})$ is a finite union of basic open sets, thus it has the given form.
Moreover, changing~$X$ to~$X\cup Z$ and~$Y$ to~$Y\cup Z$ does not affect the value of $(X\cap H^+)\cup(Y\cap H^-)\cup Z$.\end{proof}

\begin{lemma}\label{L:OpscH}
The top element of~$\Op(\cH)$ \pup{viz.~$\EE$} is \jirr\ in $\Op(\cH)$.
Consequently, the subset $\Ops(\cH)\eqdef\Op(\cH)\setminus\set{\EE}$ is a $0$-sublattice of~$\Op(\cH)$.
It is generated, as a lattice, by~$\gS_{\cH}$.
\end{lemma}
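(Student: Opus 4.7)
The plan rests on a single observation: since every $H\in\cH$ passes through the origin $0\in\EE$, neither of the open half-spaces $H^+$, $H^-$ contains $0$, and therefore no nonempty finite intersection of members of $\gS_{\cH}$ contains $0$. So I would first argue that, among basic open members of $\Op(\cH)$, only $\EE$ itself (the empty intersection) contains the origin. Combining this with the observation, recorded in the run-up to Corollary~\ref{C:JirrOpPH}, that every element of $\Op(\cH)$ is a finite union of basic open sets, I would extract the crucial dichotomy: for every $X\in\Op(\cH)$,
\[
X=\EE\iff 0\in X\,.
\]

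With that in hand the three conclusions drop out. The \jirry\ of~$\EE$ follows because if $\EE=X\cup Y$ with $X,Y\in\Op(\cH)$ then $0$ lies in~$X$ or in~$Y$, whence $X=\EE$ or $Y=\EE$; and $\EE\neq\es$. Dually, if neither $X$ nor $Y$ contains $0$ then neither does $X\cup Y$, so $\Ops(\cH)$ is closed under~$\vee$. Since it is trivially closed under~$\wedge$ and contains~$\es$, it is a $0$-sublattice of~$\Op(\cH)$.

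For the generation statement, I would invoke Lemma~\ref{L:CharOpPH} to present each $X\in\Op(\cH)$ as a finite union of basic open sets. If $X\in\Ops(\cH)$, none of those basic summands can equal~$\EE$ (else $X=\EE$), so each is a finite meet of elements of~$\gS_{\cH}$, and thus $X$ belongs to the sublattice generated by~$\gS_{\cH}$. The empty join, namely~$\es$, is produced as $H^+\wedge H^-$ for any $H\in\cH$, available since $\cH\neq\es$. The reverse inclusion is trivial since $\gS_{\cH}\subseteq\Ops(\cH)$. I do not anticipate any real obstacle here; the whole argument is powered by the dichotomy above, which in turn reflects the distinguished role of~$0$ as the common point of all hyperplanes in~$\cH$.
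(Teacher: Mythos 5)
Your argument is correct and is essentially the paper's: the paper likewise observes that every basic open member of $\Op(\cH)$ distinct from~$\EE$ omits the origin, hence so does every member of $\Op(\cH)$ distinct from~$\EE$, which gives the \jirry\ of~$\EE$, and it dismisses the remaining statements as straightforward. Your explicit dichotomy $X=\EE\iff 0\in X$ and the details on generation (including $\es=H^+\wedge H^-$, legitimately using that~$\cH\neq\es$) just spell out what the paper leaves to the reader.
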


\begin{proof}
Any basic open member of~$\Op(\cH)$, distinct from~$\EE$, omits the origin.
Hence, any member of~$\Op(\cH)$, distinct from~$\EE$, omits the origin, and so the union of any two such sets is distinct from~$\EE$.
This proves that~$\EE$ is \jirr\ in~$\Op(\cH)$.
The verifications of the other statements of Lemma~\ref{L:OpscH} are straightforward.
\end{proof}

\begin{remark}\label{Rk:OpscH}
Let~$\cH$ be finite.
Then the unit of~$\Ops(\cH)$ is equal to~$\complement\bigcap\cH$, which is distinct from the unit of~$\Op(\cH)$, which is equal to~$\EE$.
In particular, $\Ops(\cH)$ is not a Heyting subalgebra of~$\Op(\cH)$.
\end{remark}

\section{The Main Extension Lemma for lattices $\Op(\cH)$}
\label{S:JirrOpcH}

Throughout this section we shall fix a real topological vector space~$\EE$.
Our main goal is to show that Lemma~\ref{L:Suff41stepCNpure} can be applied to lattices of the form~$\Op(\cH)$ (cf. Lemma~\ref{L:OpcHCNpureinOpH'}).
This goal will be achieved \emph{via} a convenient description of the \jirr\ members of~$\Op(\cH)$ (cf. Lemma~\ref{L:JirrOpcH}), involving an operator that we will denote by~$\nabla_{\cH}$ (cf. Notation~\ref{Not:nablaU}).

For any subset~$X$ in~$\EE$, we denote by $\conv(X)$ the convex hull of~$X$, and by~$\cone(X)\eqdef\RR^+\cdot\conv(X)$ the closed convex cone generated by~$X$.
For a set~$X$, a poset~$P$, and maps $f,g\colon X\to P$, we shall set $\bck{f\leq g}\eqdef\setm{x\in X}{f(x)\leq g(x)}$, and similarly for $\bck{f<g}$, $\bck{f>g}$, and so on.
We first state two preparatory lemmas.

\begin{lemma}\label{L:Farkas}
Let~$n$ be a nonnegative integer and let~$b_1$, \dots, $b_n$, $c$ be linear functionals on~$\EE$.
Then $\bigcap_{i=1}^n\bck{b_i\geq0}\subseteq\bck{c\geq0}$ if{f} $c\in\cone(\set{b_1,\dots,b_n})$.
\end{lemma}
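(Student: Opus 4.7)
The forward implication is immediate: if $c = \sum_{i=1}^n \lambda_i b_i$ with all $\lambda_i \geq 0$, then $c(x) = \sum \lambda_i b_i(x) \geq 0$ on $\bigcap_i \bck{b_i \geq 0}$.

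For the converse, the plan is to reduce to the finite-dimensional case and then apply a separation argument. First I would set $V \eqdef \bigcap_{i=1}^n \ker(b_i)$ and observe that the hypothesis forces $c$ to vanish on $V$: for $x \in V$ both $x$ and $-x$ lie in $\bigcap_i \bck{b_i \geq 0}$, so $c(x) \geq 0$ and $c(-x) \geq 0$. Hence all of $b_1, \ldots, b_n, c$ annihilate $V$ and descend to linear functionals on the quotient $\EE/V$, which has dimension at most $n$. Since neither side of the biconditional is affected by this passage, we may assume $\dim \EE \leq n$; the topology of $\EE$ now plays no role, so we are in the classical finite-dimensional setting.

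In that setting, the cone $C \eqdef \cone(\{b_1, \ldots, b_n\})$ is a closed convex cone in the dual space $\EE^*$; closedness of a finitely generated convex cone in a finite-dimensional real vector space is a standard (if slightly delicate) fact, provable by induction on $n$. If $c \notin C$, geometric Hahn--Banach produces a linear functional $\xi$ on $\EE^*$ with $\xi(c) < 0$ and $\xi \geq 0$ on $C$. Identifying $\EE^{**}$ with $\EE$, the functional $\xi$ corresponds to a vector $x \in \EE$ satisfying $b_i(x) \geq 0$ for every $i$ while $c(x) < 0$, contradicting the hypothesis. The main obstacle is precisely the closedness of $C$: this is the technical heart of Farkas-type arguments and the one step where finite-dimensionality is genuinely essential; once it is in hand, the separation theorem together with the canonical identification $\EE^{**} \cong \EE$ completes the argument.
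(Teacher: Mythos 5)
Your proof is correct and takes essentially the same route as the paper: both reduce to the finite-dimensional case by passing to the quotient $\EE/\bigcap_{i=1}^n\ker(b_i)$ (your observation that $c$ vanishes on that subspace is exactly the point making this legitimate) and then invoke the classical Farkas lemma. The only difference is that the paper simply cites Schrijver for the finite-dimensional statement, whereas you additionally sketch its standard proof via closedness of finitely generated cones and a separation argument.
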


\begin{proof}
By working in the quotient space $\EE/{\bigcap_{i=1}^n\ker(b_i)}$,  the problem is reduced to the classical finite-dimensional case (cf. Schrijver \cite[Thm.~7.1]{Sch1986}).
\end{proof}

\begin{lemma}\label{L:FMW}
Suppose that~$\EE$ is Hausdorff.
Then $\cone(X)$ is a closed subset of~$\EE$, for every finite subset~$X$ of~$\EE$.
\end{lemma}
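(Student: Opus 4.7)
The plan is to reduce the statement to the classical finite-dimensional case, where finitely generated convex cones in a Euclidean space are known to be closed. First, I would let~$V$ be the linear subspace of~$\EE$ spanned by~$X$. Because $X$ is finite, $V$ is finite-dimensional; and because~$\EE$ is Hausdorff, $V$ is closed in~$\EE$ and its subspace topology agrees with the unique Hausdorff topological vector space topology on a finite-dimensional real vector space (namely, the Euclidean one). Since $\cone(X)\subseteq V$, it therefore suffices to prove that $\cone(X)$ is closed in~$V$.

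Now, working inside the Euclidean space~$V$, I would invoke the conic version of Carath\'eodory's theorem: every element of~$\cone(X)$ is a non-negative linear combination of some linearly independent subfamily of~$X$. This yields
\[
\cone(X)=\bigcup_{Y}\cone(Y)\,,
\]
where $Y$ ranges over the (finitely many) linearly independent subfamilies of~$X$. For any such $Y=\set{y_1,\dots,y_k}$, the linear map $\RR^k\to V$ defined by $(\gl_1,\dots,\gl_k)\mapsto\sum_{i=1}^k\gl_iy_i$ is an injective linear map between finite-dimensional Hausdorff topological vector spaces, hence a topological embedding with closed range; the image of the closed orthant $(\RR^+)^k$ under that embedding is~$\cone(Y)$, which is therefore closed in~$V$. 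A finite union of closed sets then finishes the argument.

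I do not expect any real conceptual obstacle here. The two facts I would lean on externally are the uniqueness of the Hausdorff topological vector space topology in finite dimensions (which both makes $V$ automatically closed in~$\EE$ and legitimizes purely ``Euclidean'' reasoning inside~$V$) and the conic Carath\'eodory theorem; both are entirely standard. The only place where the Hausdorff hypothesis actually enters the argument is in the very first reduction step; after that step, everything is pure finite-dimensional linear algebra.
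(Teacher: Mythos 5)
Your proof is correct, and the first half — reducing to the finite-dimensional case by observing that the span of~$X$ is finite-dimensional, hence closed in the Hausdorff space~$\EE$ and carrying the unique (Euclidean) Hausdorff vector topology — is exactly the reduction the paper performs. Where you diverge is in the finite-dimensional endgame: the paper simply cites the Farkas--Minkowski--Weyl theorem, which says that a finitely generated cone in~$\RR^d$ is polyhedral (a finite intersection of closed half-spaces) and hence closed, whereas you use the conic Carath\'eodory theorem to write $\cone(X)$ as a finite union of simplicial cones $\cone(Y)$ over linearly independent subfamilies~$Y$, each of which is the image of a closed orthant under a linear embedding and therefore closed. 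Your route is more elementary and self-contained (it needs only that injective linear maps between finite-dimensional Hausdorff spaces are closed embeddings, plus Carath\'eodory), while the paper's citation of Schrijver is shorter and keeps the references consistent with the Farkas-type duality already invoked in the neighbouring Lemma on linear functionals; the polyhedrality it delivers is stronger than what is actually needed here. One microscopic point: make sure the case of the zero vector (the empty subfamily in Carath\'eodory) is covered, but since $0\in\cone(Y)$ for every nonempty~$Y$ (and $\cone(\es)$ is closed anyway), this costs nothing.
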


\begin{proof}
The subspace~$F$ of~$\EE$ generated by~$X$ is finite-dimensional, thus (since~$\EE$ is Hausdorff) closed.
This reduces the problem to the case where $\EE=\RR^d$ for some nonnegative integer~$d$.
By the Farkas-Minkowski-Weyl Theorem (cf. Schrijver \cite[Cor.~7.1a]{Schrij1986}), $\cone(X)$ is then a finite intersection of closed half-spaces of~$\EE$.
\end{proof}

Until the end of this section, we will fix a nonempty \emph{finite} set~$\cH$ of closed hyperplanes of~$\EE$ through the origin.

\begin{notation}\label{Not:nablaU}
For every $U\in\Op(\cH)$, we set $\cH_U\eqdef\setm{H\in\cH}{H\cap U\neq\es}$.
The intersection~$\nabla_{\cH}{U}$ of all members of~$\cH_U$ is a closed subspace of~$\EE$.
\end{notation}

Recall (cf. Lemma~\ref{L:pdagger}) that for a \jirr\ member~$P$ of~$\Op(\cH)$, $P^{\dagger}$ denotes the largest element of~$\Op(\cH)$ not containing~$P$.

\begin{lemma}\label{L:JirrOpcH}
A nonempty, convex member~$P$ of~$\Op(\cH)$ is \jirr, within the lattice~$\Op(\cH)$, if{f} $P\cap\nabla_{\cH}{P}$ is nonempty.
Moreover, in that case, the lower cover~$P_*$ of~$P$, in~$\Op(\cH)$, is equal to~$P\setminus\nabla_{\cH}{P}$, and~$P^{\dagger}=\complement(\tcl(P)\cap\nabla_{\cH}{P})$.
\end{lemma}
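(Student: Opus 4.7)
The plan is to decompose every element of $\Op(\cH)$ into \emph{cells} of the arrangement and recast join-irreducibility as an adjacency question. For each $x\in\EE$ I would define the signature $\tau_x\colon\cH\to\set{+,0,-}$ via the signs of fixed defining linear functionals $b_H$ of the hyperplanes $H\in\cH$; the level sets $C_\tau\eqdef\setm{x\in\EE}{\tau_x=\tau}$ are finitely many convex pieces partitioning $\EE$. Every member of $\Bool(\cH)$ is a union of cells, and such a union $Q$ is open in $\EE$ iff $C_\sigma\subseteq Q$ and $C_\sigma\subseteq\tcl(C_\tau)$ together force $C_\tau\subseteq Q$. By Lemma~\ref{L:CharOpPH}, a nonempty convex $P\in\Op(\cH)$ is basic open, so there is a canonical partial signature $\sigma_P\colon\cH\setminus\cH_P\to\set{+,-}$ such that the cells contained in $P$ are exactly those $C_\tau$ with $\tau|_{\cH\setminus\cH_P}=\sigma_P$ (and $\tau|_{\cH_P}$ arbitrary subject to $C_\tau\neq\es$).

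For the forward direction, assume $P\cap\nabla_{\cH}P\neq\es$ and pick $x$ in this set, lying in a cell $C_0$ whose signature satisfies $\tau_0|_{\cH_P}\equiv0$ and $\tau_0|_{\cH\setminus\cH_P}=\sigma_P$. The key observation is that $C_0$ is a face of \emph{every} cell of $P$, since $\tau_0(H)\in\set{\tau(H),0}$ for all $H\in\cH$ and every cell $C_\tau\subseteq P$. Consequently, any $Q\in\Op(\cH)$ meeting $C_0$ must, by the openness criterion, contain every cell of $P$, so $P\subseteq Q$. Contrapositively, every proper $Q\subsetneq P$ in $\Op(\cH)$ avoids $\nabla_{\cH}P$. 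Since $P\setminus\nabla_{\cH}P=\bigcup_{H\in\cH_P}\pI{(P\cap H^+)\cup(P\cap H^-)}$ belongs to $\Op(\cH)$ and is strictly below $P$, it is the lower cover $P_*$, and $P$ is \jirr.

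For the converse, I would observe that $P\cap\nabla_{\cH}P=\es$ forces $\card\cH_P\geq2$: nonemptiness of $P$ rules out $\cH_P=\es$, and $\cH_P=\set{H}$ would give $\nabla_{\cH}P=H$ with $P\cap H\neq\es$. Then $P=\bigcup_{H\in\cH_P}(P\setminus H)$ expresses $P$ as a finite join in $\Op(\cH)$ of strictly smaller members (each $P\setminus H$ is proper because $P\cap H\neq\es$), so $P$ is not \jirr.

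Finally, to identify $P^\dagger$ in the \jirr\ case: the set $\complement\pI{\tcl(P)\cap\nabla_{\cH}P}$ lies in $\Op(\cH)$ by Lemma~\ref{L:CharOpPH} and does not contain $P$ since $P\cap\nabla_{\cH}P\neq\es$. A direct signature check identifies the cells of $\tcl(P)\cap\nabla_{\cH}P$ as those $C_\sigma$ with $\sigma|_{\cH_P}\equiv0$ and $\sigma(H)\in\set{\sigma_P(H),0}$ on $\cH\setminus\cH_P$; each such $C_\sigma$ enjoys the same face-of-every-cell-of-$P$ property as $C_0$. Hence, by the forward-direction argument, any $Q\in\Op(\cH)$ meeting such a cell must contain $P$; so $P\not\subseteq Q$ forces $Q\subseteq\complement\pI{\tcl(P)\cap\nabla_{\cH}P}$, confirming the identification. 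The main obstacle throughout is the combinatorial signature bookkeeping: once the face-of-every-cell-of-$P$ property is established (first for $C_0$, then for cells of $\tcl(P)\cap\nabla_{\cH}P$), everything else follows directly.
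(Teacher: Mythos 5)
Your proposal is correct in substance but takes a genuinely different route from the paper. The paper argues directly with convexity: for one direction it applies join-primeness to the covering $P\subseteq\bigcup_{H\in\cH_P}\complement H$; for the other it shows that any basic open $X\subsetneqq P$ in $\Op(\cH)$ misses some $H\in\cH_P$ (using that a convex set meeting both closed half-spaces of $H$ meets $H$), hence misses $\nabla_{\cH}P$; and it identifies $P^{\dagger}$ via $\tin\complement(P\cap\nabla_{\cH}P)$ and Lemma~\ref{L:cl(AiiF)}. You instead set up the full sign-vector/face combinatorics of the arrangement and reduce everything to one mechanism: the cell of a point of $P\cap\nabla_{\cH}P$ (and, later, any cell of $\tcl(P)\cap\nabla_{\cH}P$) is a common face of all cells of $P$, so any open union of cells meeting it absorbs all of $P$. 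This handles $P_*$, \jirry, and $P^{\dagger}$ uniformly; the price is that you must actually prove the supporting combinatorial facts (that $C_\sigma\subseteq\tcl(C_\tau)$ if{f} $\sigma(H)\in\set{\tau(H),0}$ for all $H$, and that a union of cells is open if{f} its set of cells is an up-set for this order). These do hold in an arbitrary topological vector space, via segment arguments of the same flavour as the paper's Lemma~\ref{L:cl(AiiF)}, but they are not free and are nowhere available in the paper.

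Two points need repair. The assertion that ``by Lemma~\ref{L:CharOpPH}, a nonempty convex $P\in\Op(\cH)$ is basic open'' is not what that lemma says (it only yields that every member of $\Op(\cH)$ is a finite union of basic open sets); the paper obtains basic-openness only for \jirr\ members, in Corollary~\ref{C:JirrOpPH}, which you cannot invoke here without circularity. Consequently the ``exactly'' in your description of the cells of $P$ is unjustified where it is stated. Fortunately, inspection shows your argument never uses that direction: all you need is that every cell $C_\tau\subseteq P$ has $\tau(H)=\sigma_P(H)$ for $H\in\cH\setminus\cH_P$ (immediate from $P\subseteq H^{\sigma_P(H)}$) together with the fact that a cell meeting $P$ is contained in $P$ (immediate since $P$ is a union of cells). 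Likewise, in the $P^{\dagger}$ step you only need one inclusion --- every cell of $\tcl(P)\cap\nabla_{\cH}P$ is a face of some, hence of every, cell of $P$ --- which follows from $\tcl(P)=\bigcup\tcl(C_\tau)$ over the finitely many cells of $P$; the reverse inclusion your phrasing claims would again require the unproved basic-openness. With the citation corrected, the overclaims trimmed to the inclusions actually used, and the two combinatorial lemmas supplied, the proof is complete.
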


\begin{proof}
Suppose first that~$P$ is \jirr.
Suppose, by way of contradiction, that $P\cap\nabla_{\cH}{P}=\es$, that is, $P\subseteq\bigcup_{H\in\cH_P}\complement H$.
Since~$P$ is \jirr\ in the distributive lattice~$\Op(\cH)$, it is join-prime in that lattice (cf. Lemma~\ref{L:pdagger}), thus there exists $H\in\cH_P$ such that $P\subseteq\complement H$; in contradiction with $H\in\cH_P$.

Suppose, conversely, that $P\cap\nabla_{\cH}{P}\neq\es$.
The subset $P\setminus\nabla_{\cH}{P}$ belongs to~$\Op(\cH)$ and it is a proper subset of~$P$, thus we only need to prove that every proper subset~$X$ of~$P$, belonging to~$\Op(\cH)$, is contained in~$P\setminus\nabla_{\cH}{P}$.
It suffices to consider the case where~$X$ is basic open.
There are a subset~$\cX$ of~$\cH$ and a family $\vecm{\eps_H}{H\in\cX}$ of elements of~$\set{+,-}$ such that $X=\bigcap_{H\in\cX}H^{\eps_H}$.
Since $P\not\subseteq X$, there exists $H\in\cX$ such that $P\not\subseteq H^{\eps_H}$.
Hence,
 \begin{equation}\label{Eq:PcapepsHaH<=0}
 P\cap\ol{H}^{-\eps_H}\neq\es\,.
 \end{equation}
If $P\subseteq H^{-\eps_H}$, then $X\subseteq H^{-\eps_H}$, thus, since $X\subseteq H^{\eps_H}$, we get $X=\es$, a contradiction.
Hence, $P\not\subseteq H^{-\eps_H}$, that is,
 \begin{equation}\label{Eq:PcapepsHaH>=0}
 P\cap \ol{H}^{\eps_H}\neq\es\,.
 \end{equation}
By~\eqref{Eq:PcapepsHaH<=0} and~\eqref{Eq:PcapepsHaH>=0}, and since~$P$ is convex, it follows that $P\cap H\neq\es$, that is, $H\in\cH_P$.
Hence, $\nabla_{\cH}{P}\subseteq H$.
Since $X\cap H=\es$, it follows that $X\cap\nabla_{\cH}{P}=\es$, that is, $X\subseteq P\setminus\nabla_{\cH}P$, thus completing the proof of the \jirry\ of~$P$.

Finally, it follows from Lemma~\ref{L:CharOpPH} that the set $U\eqdef\tin\complement(P\cap\nabla_{\cH}{P})$ belongs to~$\Op(\cH)$.
Moreover, $U=\complement\tcl(P\cap\nabla_{\cH}{P})$.
Since~$P\cap\nabla_{\cH}{P}\neq\es$ and by Lemma~\ref{L:cl(AiiF)}, we get $U=\complement(\tcl(P)\cap\nabla_{\cH}{P})$.
For every $V\in\Op(\cH)$, $P\not\subseteq V$ if{f} $P\cap V\subsetneqq P$, if{f} $P\cap V\subseteq P_*$, if{f} $P\cap V\cap\nabla_{\cH}{P}=\es$, if{f} $V\subseteq\complement(P\cap\nabla_{\cH}{P})$.
Since~$V$ is open, this is equivalent to $V\subseteq U$.
Therefore, $U=P^{\dagger}$.
\end{proof}


\begin{proposition}\label{P:APantitone}
Let~$P$ and~$Q$ be \jirr\ elements in~$\Op(\cH)$.
If $P\subsetneqq Q$, then $\nabla_{\cH}{Q}\subsetneqq\nabla_{\cH}{P}$.
\end{proposition}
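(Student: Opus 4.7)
The easy half of the proposition goes through immediately: since every hyperplane meeting $P$ also meets $Q$ we have $\cH_P\subseteq\cH_Q$, whence $\nabla_{\cH}Q=\bigcap\cH_Q\subseteq\bigcap\cH_P=\nabla_{\cH}P$. The content of the statement is thus strictness, and my plan is to case-split on whether $\cH_P\subsetneqq\cH_Q$ or $\cH_P=\cH_Q$.

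In the first case, I would pick any $H_0\in\cH_Q\setminus\cH_P$ and use the join-irreducibility of $P$ to exclude $\nabla_{\cH}P\subseteq H_0$. Indeed, Lemma~\ref{L:JirrOpcH} furnishes a point $x\in P\cap\nabla_{\cH}P$; were $\nabla_{\cH}P\subseteq H_0$, one would get $x\in P\cap H_0$, contradicting $H_0\notin\cH_P$. Hence $\nabla_{\cH}P\nsubseteq H_0$, and
\[
 \nabla_{\cH}Q\subseteq H_0\cap\nabla_{\cH}P\subsetneqq\nabla_{\cH}P
\]
gives the desired strict inclusion.

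The remaining case $\cH_P=\cH_Q$ should not occur, and the plan is to rule it out by showing that it already forces $P=Q$. By Corollary~\ref{C:JirrOpPH} both $P$ and $Q$ are basic open (hence convex and nonempty); for every $H\in\cH$ not meeting the open convex set $P$ (resp.\ $Q$), $P$ (resp.\ $Q$) lies on a uniquely determined open side $H^{\eps^P_H}$ (resp.\ $H^{\eps^Q_H}$). Enlarging any defining basic open representation of $P$ so as to include every hyperplane in $\cH\setminus\cH_P$ yields
\[
 P=\bigcap_{H\in\cH\setminus\cH_P}H^{\eps^P_H},\qquad
 Q=\bigcap_{H\in\cH\setminus\cH_Q}H^{\eps^Q_H}.
\]
Under $\cH_P=\cH_Q$ the two intersections range over the same index set, and for each such $H$ the chain of inclusions $P\subseteq Q\subseteq H^{\eps^Q_H}$ combined with $P\subseteq H^{\eps^P_H}$ and $P\neq\es$ forces $\eps^P_H=\eps^Q_H$; whence $P=Q$, contradicting $P\subsetneqq Q$.

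The only mildly delicate point I foresee is the representation step above: one must verify that every basic open element $P$ of $\Op(\cH)$ admits the intersection representation indexed precisely by $\cH\setminus\cH_P$. This rests on the observation that any hyperplane $H$ with $H\cap P=\es$ may be freely adjoined to any defining intersection of $P$, using the sign of the open half-space containing $P$. Once this normalisation is secured, the rest of the argument is purely combinatorial and the proposition follows.
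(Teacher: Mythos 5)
Your argument is correct, but it takes a noticeably longer route than the paper's. The paper settles strictness in three lines: since $Q$ is \jirr\ and $P\subsetneqq Q$, $P$ lies below the lower cover $Q_*$, which Lemma~\ref{L:JirrOpcH} identifies as $Q\setminus\nabla_{\cH}Q$; hence $P\cap\nabla_{\cH}Q=\es$, whereas $P\cap\nabla_{\cH}P\neq\es$ (Lemma~\ref{L:JirrOpcH} again), so $\nabla_{\cH}P\neq\nabla_{\cH}Q$, and the non-strict inclusion $\nabla_{\cH}Q\subseteq\nabla_{\cH}P$, which you both establish the same way, does the rest. This sidesteps your case distinction entirely: the paper never needs to know whether $\cH_P=\cH_Q$, because the description of $Q_*$ already encodes that any member of $\Op(\cH)$ properly contained in $Q$ must avoid $\nabla_{\cH}Q$. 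Your case~1 is essentially a localized form of the same idea (a single $H_0\in\cH_Q\setminus\cH_P$ witnessing the drop in dimension), while your case~2 proves a genuinely extra structural fact: a nonempty basic open set satisfies $P=\bigcap_{H\in\cH\setminus\cH_P}H^{\eps^P_H}$, so that nested \jirr\ elements with the same set of meeting hyperplanes must coincide. That normalization is sound --- the sign $\eps^P_H$ is well defined because a nonempty convex set disjoint from a hyperplane lies in a single open half-space, and adjoining such half-spaces does not shrink $P$ --- but it is work the lower-cover argument renders unnecessary. In short: both proofs are valid; yours yields a small bonus (the determination of a \jirr\ element by $\cH_P$ together with the signs), the paper's is the economical one.
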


\begin{proof}
By definition, $\cH_P\subseteq\cH_Q$, thus $\nabla_{\cH}{Q}\subseteq\nabla_{\cH}{P}$.
Since $P\subsetneqq Q$ and by Lemma~\ref{L:JirrOpcH}, $P$ is contained in $Q_*=Q\setminus\nabla_{\cH}{Q}$, thus $P\cap\nabla_{\cH}{Q}=\es$.
Since $P\cap\nabla_{\cH}{P}\neq\es$, it follows that $\nabla_{\cH}{P}\neq\nabla_{\cH}{Q}$.
\end{proof}

\begin{lemma}[Extension Lemma for~$\Op(\cH)$]
\label{L:OpcHCNpureinOpH'}
Let~$H$ be a closed hyperplane of~$\EE$, let~$L$ be a generalized dual Heyting algebra, and let $f\colon\Op(\cH)\to L$ be a consonant $0$-lattice homomorphism.
Then~$f$ extends to a unique lattice homomorphism $g\colon\Op(\cH\cup\set{H})\to L$ such that $g(H^+)=f_*(H^+)$ and $g(H^-)=f_*(H^-)$.
\end{lemma}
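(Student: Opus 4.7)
\emph{Plan.} My plan is to invoke the Main Extension Lemma~\ref{L:Suff41stepCNpure} with $D\eqdef\Op(\cH)$, $E\eqdef\Op(\cH\cup\set{H})$, $a\eqdef H^+$, $b\eqdef H^-$, and $L$ as given; the desired $g$ will arise from the $f$-admissible pair $(\ga,\gb)\eqdef(f_*(H^+),f_*(H^-))$, which is indeed admissible by parts~(i) and~(ii) of that lemma. Uniqueness of~$g$ will follow for free from hypothesis~\eqref{EgenDab}, since a lattice homomorphism on~$E$ is determined by its values on any generating set.

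The bulk of the work will be verifying the five hypotheses of Lemma~\ref{L:Suff41stepCNpure}. Hypothesis~\eqref{EgenDab} is immediate from Corollary~\ref{C:ChangeH}. Hypothesis~\eqref{DsubHaE} follows from Lemma~\ref{L:CharOpPH}: both~$\Op(\cH)$ and $\Op(\cH\cup\set{H})$ are Heyting subalgebras of~$\cO(\EE)$, so~$\Op(\cH)$ is a Heyting subalgebra of~$\Op(\cH\cup\set{H})$. Hypothesis~\eqref{ammb=0} is simply $H^+\cap H^-=\es$. For hypothesis~\eqref{pleqp*ab}, given $P\in\Ji(D)$, Lemma~\ref{L:JirrOpcH} gives $P\setminus P_*=P\cap\nabla_{\cH}P$, which is convex (as the intersection of the convex set~$P$, by Corollary~\ref{C:JirrOpPH}, with a linear subspace) and nonempty (by \jirry). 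The assumption $P\subseteq P_*\vee H^+\vee H^-$ means $P\cap\nabla_{\cH}P$ avoids~$H$; by connectedness this convex set lies entirely in~$H^+$ or entirely in~$H^-$, giving the required alternative $P\subseteq P_*\cup H^+$ or $P\subseteq P_*\cup H^-$.

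The main obstacle is hypothesis~\eqref{ab2incomppq}. Suppose $P,Q\in\Ji(D)$ satisfy $P\cap\nabla_{\cH}P\subseteq H^+$ and $Q\cap\nabla_{\cH}Q\subseteq H^-$; I want to show~$P$ and~$Q$ are incomparable. The equality $P=Q$ would force the nonempty set $P\cap\nabla_{\cH}P$ into $H^+\cap H^-=\es$, absurd. Suppose then $P\subsetneq Q$. By Lemma~\ref{L:pdagger}, the map $p\mapsto p^{\dagger}$ is an order-\emph{reversing} bijection $\Ji(D)\to\Mi(D)$, so $P^{\dagger}\supsetneq Q^{\dagger}$. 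The characterization $p^{\dagger}=\complement\pI{\tcl(p)\cap\nabla_{\cH}p}$ from Lemma~\ref{L:JirrOpcH} therefore yields
\[
\tcl(P)\cap\nabla_{\cH}P\subseteq\tcl(Q)\cap\nabla_{\cH}Q.
\]
Pick $x\in P\cap\nabla_{\cH}P$; then on the one hand $x\in H^+$, while on the other $x\in\nabla_{\cH}Q$ and $x\in P\subseteq Q$, so $x\in Q\cap\nabla_{\cH}Q\subseteq H^-$, a contradiction. By symmetry $Q\subsetneq P$ is also excluded, establishing~\eqref{ab2incomppq}.

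With all five hypotheses verified, Lemma~\ref{L:Suff41stepCNpure}(iii) applied to the admissible pair $(f_*(H^+),f_*(H^-))$ produces the required lattice homomorphism $g\colon\Op(\cH\cup\set{H})\to L$ extending~$f$ with $g(H^+)=f_*(H^+)$ and $g(H^-)=f_*(H^-)$, and uniqueness was noted at the outset.
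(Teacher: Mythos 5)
Your overall strategy is exactly the paper's: reduce to Lemma~\ref{L:Suff41stepCNpure} with $D=\Op(\cH)$, $E=\Op(\cH\cup\set{H})$, $a=H^+$, $b=H^-$, and take the admissible pair $(f_*(H^+),f_*(H^-))$. Your verifications of hypotheses \eqref{EgenDab}, \eqref{DsubHaE}, \eqref{ammb=0}, and \eqref{pleqp*ab} match the paper and are correct. However, your verification of hypothesis~\eqref{ab2incomppq} contains a genuine error: Lemma~\ref{L:pdagger} asserts that $p\mapsto p^{\dagger}$ is an order-\emph{isomorphism} (order-preserving), not order-reversing. Indeed, if $p\leq q$ then $\setm{x}{p\nleq x}\subseteq\setm{x}{q\nleq x}$, so $p^{\dagger}\leq q^{\dagger}$. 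Consequently, from $P\subsetneqq Q$ you get $P^{\dagger}\subseteq Q^{\dagger}$, hence $\tcl(Q)\cap\nabla_{\cH}Q\subseteq\tcl(P)\cap\nabla_{\cH}P$ --- the \emph{reverse} of the containment you wrote. Your subsequent step, deducing $x\in\nabla_{\cH}Q$ from $x\in\nabla_{\cH}P$, is then unjustified and in fact false: since $P\subsetneqq Q$ forces $P\subseteq Q_*=Q\setminus\nabla_{\cH}Q$, one has $P\cap\nabla_{\cH}Q=\es$, so no point of $P$ can lie in $\nabla_{\cH}Q$. (A concrete counterexample to your containment: in $\RR^2$ with $\cH$ the two coordinate axes, take $P$ the open first quadrant and $Q$ the open right half-plane; then $\nabla_{\cH}P=\RR^2$ while $\nabla_{\cH}Q$ is the $x$-axis.)

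The correct argument, which is the paper's, runs through the same objects with the inclusion pointed the right way: from $P\subseteq P_*\cup H^+$ one gets $P\cap\nabla_{\cH}P\subseteq H^+$, hence (using Lemma~\ref{L:cl(AiiF)}) $\tcl(Q)\cap\nabla_{\cH}Q\subseteq\tcl(P)\cap\nabla_{\cH}P=\tcl(P\cap\nabla_{\cH}P)\subseteq\ol{H}^+$; combined with $Q\cap\nabla_{\cH}Q\subseteq H^-$ and $\ol{H}^+\cap H^-=\es$ this forces $Q\cap\nabla_{\cH}Q=\es$, contradicting the \jirry\ of~$Q$ by Lemma~\ref{L:JirrOpcH}. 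Everything else in your write-up stands, so the fix is local to this one case of hypothesis~\eqref{ab2incomppq}.
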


We refer to Lemma~\ref{L:Suff41stepCNpure} for the notations~$f_*(H^+)$ and~$f_*(H^-)$.

\begin{proof}
It suffices to verify that Conditions \eqref{EgenDab}--\eqref{ab2incomppq} of Lemma~\ref{L:Suff41stepCNpure} are satisfied, with $D:=\Op(\cH)$, $E:=\Op(\cH\cup\set{H})$, $a:= H^+$, and $b:= H^-$.
Conditions~\eqref{EgenDab} (use Corollary~\ref{C:ChangeH}) and~\eqref{ammb=0} are obvious.
By Lemma~\ref{L:CharOpPH}, $\Op(\cH)$ is a Heyting subalgebra of $\Op(\cH\cup\set{H})$;
Condition~\eqref{DsubHaE} follows.

Let~$P$ be a \jirr\ element of~$\Op(\cH)$ such that $P\subseteq P_*\cup H^+\cup H^-$.
By Lemma~\ref{L:JirrOpcH}, this means that $P\cap\nabla_{\cH}{P}\subseteq H^+\cup H^-$.
Since $P\cap\nabla_{\cH}{P}$ is convex, this implies that~$P\cap\nabla_{\cH}{P}$ is contained either in~$H^+$ or in~$H^-$, thus that~$P$ is contained either in $P_*\cup H^+$ or in~$P_*\cup H^-$.
Condition~\eqref{pleqp*ab} follows.

For Condition~\eqref{ab2incomppq}, let $P,Q\in\Ji\Op(\cH)$ such that $P\subseteq P_*\cup H^+$ and $Q\subseteq Q_*\cup H^-$.
Suppose for example that $P\subseteq Q$.
Then $P\cap\nabla_{\cH}{P}\subseteq H^+$, $Q\cap\nabla_{\cH}{Q}\subseteq H^-$, and $P^{\dagger}\subseteq\nobreak Q^{\dagger}$.
Thus, by Lemma~\ref{L:JirrOpcH}, $\tcl(Q)\cap\nabla_{\cH}{Q}\subseteq\tcl(P)\cap\nabla_{\cH}{P}$.
It follows that $Q\cap\nabla_{\cH}{Q}\subseteq\ol{H}^+$, hence $Q\cap\nabla_{\cH}{Q}=\es$, a contradiction.
\end{proof}

\section{Correcting a closure defect}\label{S:ForcCl}

Throughout this section we shall fix a real topological vector space~$\EE$, with topological dual~$\EE'$, endowed with the weak-$*$ topology.

\begin{lemma}\label{L:1stepForcCl}
Let~$\cH$ be a finite set of closed hyperplanes in~$\EE$, let $a,b\in\EE'$ with respective kernels~$A$ and~$B$, both belonging to~$\cH$.
We set
 \begin{align*}
 A^+\eqdef\bck{a>0}\,,&\quad A^-\eqdef\bck{a<0}\,,\\
 B^+\eqdef\bck{b>0}\,,&\quad B^-\eqdef\bck{b<0}\,,\\
 C_m\eqdef\ker(a-mb)\,,&\quad
 \cH_m\eqdef\cH\cup\set{C_m}\,,\\
 C_m^+\eqdef\bck{a>mb}\,,&\quad
 C_m^-\eqdef\bck{a<mb}\,,
 \end{align*}
for any positive integer~$m$.
Then for all large enough~$m$, the following statement holds:
for every generalized dual Heyting algebra~$L$, every consonant $0$-lattice homomorphism $f\colon\Op(\cH)\to L$ extends to a lattice homomorphism $g\colon\Op(\cH_m)\to L$ such that $g(A^+\sd_{\Ops(\cH_m)}B^+)=f(A^+)\sd_Lf(B^+)$.
\end{lemma}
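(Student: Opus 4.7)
The plan is to choose $m$ large enough that for every $p \in \Ji\Op(\cH)$, the nonempty relatively open convex cone $P \eqdef p \cap \nabla_{\cH}p$ satisfies $P \subseteq C_m^+$ if and only if $P \subseteq B^- \cup (A^+ \cap B)$. Indeed, a point $x \in P$ with $b(x) > 0$ forces $m < a(x)/b(x)$, while a point with $b(x) = 0$, $a(x) \leq 0$ is never in $C_m^+$; since $\Ji\Op(\cH)$ is finite, taking $m$ above all the relevant thresholds rules out both obstructions. Lemma~\ref{L:OpcHCNpureinOpH'} applied to $H \eqdef C_m$ then yields the canonical extension $g\colon \Op(\cH_m) \to L$ of $f$, with $g(C_m^\pm) = f_*(C_m^\pm)$.

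Since on $A^+ \setminus B^+$ one has $a > 0 \geq mb$, the inclusion $A^+ \subseteq B^+ \cup (A^+ \cap C_m^+)$ holds, whence $A^+ \sd_{\Ops(\cH_m)} B^+ \subseteq A^+ \cap C_m^+$ in $\Op(\cH_m)$; applying $g$ gives the sandwich
\[
 f(A^+) \sd_L f(B^+) \leq g\pI{A^+ \sd_{\Ops(\cH_m)} B^+} \leq f(A^+) \wedge f_*(C_m^+)\,,
\]
where the left inequality is automatic. The conclusion thus reduces to the key estimate $f(A^+) \wedge f_*(C_m^+) \leq f(A^+) \sd_L f(B^+)$. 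Expanding $f_*(C_m^+)$ as in Lemma~\ref{L:Suff41stepCNpure} and distributing, it suffices to show, for every $p \in \Ji\Op(\cH)$ with $P \subseteq C_m^+$, that $f(A^+) \wedge \pI{f(p) \sd_L f(p_*)} \leq f(A^+) \sd_L f(B^+)$.

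For each such $p$ the choice of $m$ forces $P \subseteq B^- \cup (A^+ \cap B)$; since $\nabla_{\cH}p$ is a subspace one cannot have both $A, B \in \cH_p$ \pup{otherwise $P \subseteq A \cap B$, incompatible with the target containment}. The case analysis then runs as follows. If $p \subseteq A^-$, then $f(A^+) \wedge f(p) \leq f(A^+ \cap A^-) = 0$. If $p \subseteq A^+$ and $B \in \cH_p$, then $\nabla_{\cH}p \subseteq B$ yields $p \cap B^+ \subseteq p_*$, and Lemma~\ref{L:a1-b1leqa-b}, applied to the $L$-valued difference operation $(x, y) \mapsto f(x) \sd_L f(y)$ on $\Op(\cH)$ with $(a_1, b_1, a_2, b_2) \eqdef (p, p_*, A^+, B^+)$, delivers $f(p) \sd_L f(p_*) \leq f(A^+) \sd_L f(B^+)$. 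In all remaining configurations $p \subseteq B^-$, so the bound reduces to the identity $f(A^+ \cap B^-) \leq f(A^+) \sd_L f(B^+)$, which itself follows by meeting $f(A^+) \leq f(B^+) \vee (f(A^+) \sd_L f(B^+))$ with $f(A^+ \cap B^-)$ and using $f(A^+ \cap B^- \cap B^+) = 0$ together with the distributivity of~$L$. The main obstacle is the careful bookkeeping of the geometric configurations of $p$ relative to $A$ and $B$ --- in particular, that the subspace nature of $\nabla_{\cH}p$ blocks all the degenerate possibilities --- so as to ensure that the three cases above exhaust every $p$ contributing to $f_*(C_m^+)$.
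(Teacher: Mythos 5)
Your proof is correct and has the same overall architecture as the paper's --- reduce everything to the inequality $f(A^+)\wedge g(C_m^+)\leq f(A^+)\sd_Lf(B^+)$, expand $g(C_m^+)=f_*(C_m^+)$ over the join-irreducibles of $\Op(\cH)$, and dispatch each contributing $p$ either through Lemma~\ref{L:a1-b1leqa-b} (when $p\subseteq A^+$ with $p\cap B^+\subseteq p_*$) or through the computation with $f(A^+)\wedge f(B^-)$ (when $p\subseteq B^-$). Where you genuinely diverge is in the choice of the threshold $m_0$. The paper proves the stronger Claim that $C_m^-\subseteq X$ implies $B^+\subseteq X$ for all $X\in\Op(\cH)$, by encoding each $\tcl(P)\cap\nabla_{\cH}P$ as $\bigcap_{x\in\Phi_P}\bck{x\geq 0}$ and invoking Farkas' lemma together with the Minkowski--Weyl closedness of the finitely generated cones $K_P$ in the weak-$*$ dual $\EE'$ (Lemmas~\ref{L:Farkas} and~\ref{L:FMW}); you instead pick, for each of the finitely many $p\in\Ji\Op(\cH)$ with $(p\cap\nabla_{\cH}p)\cap B^+\neq\es$, a single witness point $x_p$ and take $m>a(x_p)/b(x_p)$, so that $x_p\notin C_m^+$ disqualifies $p$ from contributing to $f_*(C_m^+)$. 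This witness-point argument is more elementary (no duality, no closed-cone theorem) and it delivers exactly what is needed, namely $p\cap B^+\cap\nabla_{\cH}p=\es$, hence $p\cap B^+\subseteq p_*$, without any detour through closures and $P^{\dagger}$. Two small imprecisions worth fixing: the biconditional ``$P\subseteq C_m^+$ if and only if $P\subseteq B^-\cup(A^+\cap B)$'' is false in the ``if'' direction (e.g.\ a convex open cone inside $B^-$ whose closure meets $B$ nontrivially need not lie in any $C_m^+$), but only the ``only if'' direction is used and is correctly secured; and ``taking $m$ above all the relevant thresholds'' must be read as one threshold per join-irreducible (a chosen witness), not the supremum over all points of $P\cap B^+$, which could be infinite.
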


\begin{note}
The notation $A^+\sd_{\Ops(\cH_m)}B^+$ might look a bit crowded, in particular due to the use of~$\Ops(\cH_m)$ instead of~$\Op(\cH_m)$.
In reality, that distinction is immaterial here, because~$\Ops(\cH_m)$ is an ideal of~$\Op(\cH_m)$, thus $U\sd_{\Ops(\cH_m)}V=U\sd_{\Op(\cH_m)}V$ for all $U,V\in\Ops(\cH_m)$.
\end{note}

\begin{proof}
We begin by stating exactly how large~$m$ should be.

\setcounter{claim}{0}

\begin{claim}\label{Cl:HowLargem}
There exists a positive integer~$m_0$ such that for all~$m\geq m_0$ and all $X\in\Op(\cH)$, $C_m^-\subseteq X$ implies that $B^+\subseteq X$.
\end{claim}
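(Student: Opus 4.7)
The plan is to exploit the finiteness of~$\Op(\cH)$ together with the pointwise ``convergence'' of the half-spaces~$C_m^-$ toward~$B^+$ as~$m$ grows.

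Since~$\cH$ is finite, the generating set~$\gS_{\cH}$ of~$\Bool(\cH)$ is finite, so~$\Bool(\cH)$ itself is a finite Boolean algebra; in particular the sublattice $\Op(\cH)$ is finite.
It therefore suffices to produce, separately for each of the finitely many $X\in\Op(\cH)$ with $B^+\nsubseteq X$, a threshold~$m_X$ such that $C_m^-\nsubseteq X$ whenever $m\geq m_X$, and then to take~$m_0$ to be the maximum of these finitely many~$m_X$ (together with~$1$, to ensure positivity).

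To construct~$m_X$, fix once and for all a witness $y_X\in B^+\setminus X$; since $b(y_X)>0$, we may choose~$m_X$ to be any positive integer strictly greater than $a(y_X)/b(y_X)$.
For every $m\geq m_X$ one then has $a(y_X)<m\,b(y_X)$, i.e.~$y_X\in C_m^-$, while $y_X\notin X$ by the choice of witness; hence $C_m^-\nsubseteq X$.
The contrapositive of this, applied to $m_0\eqdef\max_X m_X$, is exactly the statement of the claim.

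I do not foresee any genuine obstacle in this preparatory step: the whole argument is a finite case analysis reduced to a one-variable linear inequality, with no input from the topology of~$\EE$ beyond the algebraic structure of the functionals $a,b$.
The substantive content of Lemma~\ref{L:1stepForcCl} should lie rather in the subsequent construction of the extension~$g$ itself, and in verifying the equation $g(A^+\sd_{\Ops(\cH_m)}B^+)=f(A^+)\sd_Lf(B^+)$, where Lemma~\ref{L:Suff41stepCNpure} and the description of \jirr~elements of~$\Op(\cH_m)$ via~$\nabla_{\cH_m}$ are expected to do the bulk of the work, with Claim~\ref{Cl:HowLargem} providing the combinatorial lever that turns containment of~$C_m^-$ into containment of~$B^+$.
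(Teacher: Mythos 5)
Your proof is correct, and it takes a genuinely more elementary route than the paper's. The paper first reduces the claim to the \mirr\ elements $P^{\dagger}$ of~$\Op(\cH)$ (every member of~$\Op(\cH)$ being an intersection of such), writes $\tcl(P)\cap\nabla_{\cH}{P}$ as $\bigcap_{x\in\Phi_P}\bck{x\geq0}$ for a finite set~$\Phi_P$ of functionals with kernels in~$\cH$, and invokes Lemma~\ref{L:FMW} to see that the cone $K_P\eqdef\cone(\Phi_P)$ is closed in~$\EE'$ and Lemma~\ref{L:Farkas} to translate $C_m^-\subseteq P^{\dagger}$ into $-b+(1/m)a\in K_P$ and $B^+\subseteq P^{\dagger}$ into $-b\in K_P$; the threshold~$m_0$ then comes from the closedness of the finitely many~$K_P$ with $-b\notin K_P$, together with the convergence of $-b+(1/m)a$ to~$-b$. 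You instead stay in the primal space~$\EE$: the finiteness of~$\Op(\cH)$ (clear, since~$\Bool(\cH)$ is a Boolean algebra of sets with finitely many generators) lets you treat each~$X$ with $B^+\nsubseteq X$ separately, and a single witness $y_X\in B^+\setminus X$, for which $b(y_X)>0$, yields a threshold $m_X>a(y_X)/b(y_X)$ by a one-line computation, after which taking the maximum gives~$m_0$. Both arguments are valid and produce an~$m_0$ with exactly the property that the remainder of the proof of Lemma~\ref{L:1stepForcCl} uses; yours has the advantage of bypassing Lemmas~\ref{L:Farkas} and~\ref{L:FMW} and the weak-$*$ topology on~$\EE'$ altogether (none of which are used elsewhere in the paper), while the paper's dual formulation makes the geometric content of the containments $C_m^-\subseteq P^{\dagger}$ explicit as membership in finitely generated cones of functionals. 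Your closing remark is also accurate: the substantive work in Lemma~\ref{L:1stepForcCl} lies in the subsequent verification of $g(A^+\sd_{\Ops(\cH_m)}B^+)=f(A^+)\sd_Lf(B^+)$.
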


\begin{cproof}
Every $P\in\Ji\Op(\cH)$ is basic open, thus both~$\tcl(P)$ and~$\nabla_{\cH}{P}$ are intersections of closed half-spaces with boundaries in~$\cH$.
Hence, there is a finite subset~$\Phi_P$ of $\EE'\setminus\set{0}$ such that $\tcl(P)\cap\nabla_{\cH}{P}=\bigcap_{x\in\Phi_P}\bck{x\geq0}$ and $\ker(x)\in\cH$ for every $x\in\Phi_P$.
Since~$\EE'$ is Hausdorff, it follows from Lemma~\ref{L:FMW} that the closed convex cone~$K_P$ generated by~$\Phi_P$ is a closed subset of~$\EE'$.
Hence, setting $\cP\eqdef\setm{P\in\Ji\Op(\cH)}{-b\notin K_P}$, there exists a positive integer~$m_0$ such that
 \begin{equation}\label{Eq:Defnm0}
 -b+(1/m)a\notin K_P\,,\text{ for all }P\in\cP
 \text{ and all }m\geq m_0\,.
 \end{equation}
It follows from Lemma~\ref{L:Farkas} that for every $y\in\EE$ and every $P\in\Ji\Op(\cH)$, $-y\in\nobreak K_P$ if{f} $\bigcap_{x\in\Phi_P}{\bck{x\geq0}}\subseteq\bck{y\leq0}$, if{f} $\tcl(P)\cap\nabla_{\cH}{P}\subseteq\bck{y\leq0}$, if{f} $\bck{y>0}\subseteq P^{\dagger}$ (cf. Lemma~\ref{L:JirrOpcH}).
In particular, $-b\in K_P$ if{f} $B^+\subseteq P^{\dagger}$.
Similarly, $-b+(1/m)a\in K_P$ if{f} $C_m^-\subseteq P^{\dagger}$.
Hence, \eqref{Eq:Defnm0} means that $C_m^-\subseteq P^{\dagger}$ implies that $B^+\subseteq P^{\dagger}$, whenever $m\geq m_0$ and $P\in\Ji\Op(\cH)$.
Now every \mirr\ element of~$\Op(\cH)$ has the form~$P^{\dagger}$ (cf. Lemma~\ref{L:pdagger}), and every element of~$\Op(\cH)$ is an intersection of \mirr\ elements of~$\Op(\cH)$. 
\end{cproof}

We shall prove that every integer $m\geq m_0$ has the property stated in Lemma~\ref{L:1stepForcCl}.
Let~$L$ be a generalized dual Heyting algebra and let $f\colon\Op(\cH)\to L$ be a consonant $0$-lattice homomorphism.
We consider the extension~$g$ of~$f$, to a homomorphism from~$\Op(\cH_m)$ to~$L$, given by Lemma~\ref{L:OpcHCNpureinOpH'}, with $H:=C_m$, $H^+:=C_m^+$, $H^-:=C_m^-$.
In particular,
 \begin{equation}\label{Eq:g(Cm+)}
 g(C_m^+)=\bigvee\vecm{f(P)\sd_Lf(P_*)}
 {P\in\Ji\Op(\cH)\,,\ P\subseteq P_*\cup C_m^+}\,.
 \end{equation}
We claim that the following inequality holds:
 \begin{equation}\label{Eq:fA+Cm+A+B+}
 f(A^+)\wedge g(C_m^+)\leq f(A^+)\sd_Lf(B^+)\,.
 \end{equation}
Since~$L$ is distributive, this amounts to proving the following statement:
 \begin{multline}\label{Eq:fACm}
 f(A^+)\wedge\pI{f(P)\sd_Lf(P_*)}\leq f(A^+)\sd_Lf(B^+)\,,\\
 \quad\text{for every }P\in\Ji\Op(\cH)\text{ such that }
 P\subseteq P_*\cup C_m^+\,.
 \end{multline}
Let $P\in\Ji\Op(\cH)$ such that $P\subseteq P_*\cup C_m^+$; that is, $P\cap\nabla_{\cH}{P}\subseteq C_m^+$.
It follows that $\tcl(P)\cap\nabla_{\cH}{P}=\tcl(P\cap\nabla_{\cH}{P})\subseteq\ol{C}_m^+$, that is, $C_m^-\subseteq P^{\dagger}$.
It thus follows from the definition of~$m_0$ (cf. Claim~\ref{Cl:HowLargem}) that $B^+\subseteq P^{\dagger}$, that is, $P\not\subseteq B^+$.
Since~$B^+\in\Op(\cH)$, it follows that $P\cap B^+\subseteq P_*$.
%
%
%

%

Now suppose that $P\subseteq A^+$.
Since $P\cap B^+\subseteq P_*$, the inequalities $P\subseteq P_*\cup A^+$ and $P\cap B^+\subseteq P_*$ both hold, thus also $f(P)\leq f(P_*)\vee f(A^+)$ and $f(P)\wedge f(B^+)\leq f(P_*)$.
Since~$\sd_L$ is an $L$-valued difference operation on the range of~$f$ (cf. Lemma~\ref{L:diffS}), it follows from Lemma~\ref{L:a1-b1leqa-b} that $f(P)\sd_Lf(P_*)\leq f(A^+)\sd_Lf(B^+)$, which implies~\eqref{Eq:fACm} right away.

It remains to handle the case where $P\not\subseteq A^+$.
Due to the obvious containment $C_m^+\subseteq A^+\cup B^-$, we get $P\subseteq P_*\cup A^+\cup B^-$, thus, since~$P$ is join-prime in~$\Op(\cH)$, we get $P\subseteq B^-$, thus $f(P)\sd_Lf(P_*)\leq f(B^-)$, and thus,
by using the equation $f(B^+)\wedge f(B^-)=0$ and the inequality $f(A^+)\leq f(B^+)\vee\pI{f(A^+)\sd_Lf(B^+)}$,
 \begin{align*}
 f(A^+)\wedge\pI{f(P)\sd_Lf(P_*)}&\leq f(A^+)\wedge f(B^-)\\
 &\leq\pI{f(B^+)\wedge f(B^-)}\vee
 \pII{\pI{f(A^+)\sd_Lf(B^+)}\wedge f(B^-)}\\
 &=\pI{f(A^+)\sd_Lf(B^+)}\wedge f(B^-)\\
 &\leq f(A^+)\sd_Lf(B^+)\,,
 \end{align*}
thus completing the proof of~\eqref{Eq:fACm} in the general case, and therefore of~\eqref{Eq:fA+Cm+A+B+}.

Now $A^+\subseteq B^+\cup(A^+\cap C_m^+)$, thus $A^+\sd_{\Ops(\cH_m)}B^+\subseteq A^+\cap C_m^+$, and thus
 \[
 g(A^+\sd_{\Ops(\cH_m)}B^+)\leq g(A^+\cap C_m^+)
 =f(A^+)\wedge g(C_m^+)\leq f(A^+)\sd_Lf(B^+)\,.
 \]
Since $f(A^+)\leq f(B^+)\vee g(A^+\sd_{\Ops(\cH_m)}B^+)$, the converse inequality
 \[
 f(A^+)\sd_Lf(B^+)\leq g(A^+\sd_{\Ops(\cH_m)}B^+)
 \]
holds, and therefore $f(A^+)\sd_Lf(B^+)=g(A^+\sd_{\Ops(\cH_m)}B^+)$.
\end{proof}

Lemma~\ref{L:1stepForcCl} deals with closure defects of the form $f(A^+)\leq f(B^+)\vee\gamma$.
A finite iteration of that result will yield our next lemma, which extends it to closure defects of the form $f(U)\leq f(V)\vee\gamma$, for arbitrary $U,V\in\Ops(\cH)$.

\begin{lemma}\label{L:2stepForcCl}
Let~$\gL$ be an additive subgroup of~$\EE'$.
Let~$\cH$ be a finite subset of $\cH_{\gL}\eqdef\setm{\ker(x)}{x\in\gL\setminus\set{0}}$, let~$L$ be a completely normal distributive lattice with zero, let $f\colon\Op(\cH)\to L$ be a $0$-lattice homomorphism, let $U,V\in\Ops(\cH)$, and let~$\gamma\in L$ such that $f(U)\leq f(V)\vee\gamma$.
Then there are a finite subset~$\widetilde{\cH}$ of~$\cH_{\gL}$, containing~$\cH$, $W\in\Ops(\widetilde{\cH})$, and a lattice homomorphism $g\colon\Op(\widetilde{\cH})\to L$ extending~$f$, such that $U\subseteq V\cup W$ and $g(W)\leq\gamma$.
\end{lemma}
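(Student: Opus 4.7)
The plan is to take $W := U\sd_{\Ops(\widetilde{\cH})}V$ inside a suitably enlarged finite hyperplane set $\widetilde{\cH}\supseteq\cH$; because $\Ops(\widetilde{\cH})$ is a finite distributive lattice, hence a generalized dual Heyting algebra, this pseudo-difference exists, the containment $U\subseteq V\cup W$ is automatic from its defining property, and the substantive task reduces to arranging $g(W)\leq\gamma$.

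The bridge from the closure defect for the arbitrary pair $(U,V)$ down to half-space closure defects already handled by Lemma~\ref{L:1stepForcCl} is Lemma~\ref{L:GenConsonant}: since $\gS_\cH$ generates $\Ops(\cH)$ as a lattice by Lemma~\ref{L:OpscH}, it suffices to produce a generalized dual Heyting sublattice $S\subseteq L$ containing $f[\Op(\cH)]\cup\set{\gamma}$ in which $f[\Ops(\cH)]$ is consonant, together with a lattice extension $g\colon\Op(\widetilde{\cH})\to S$ of $f$ satisfying $g(S_1\sd_{\Ops(\widetilde{\cH})}S_2)\leq f(S_1)\sd_S f(S_2)$ for every pair $(S_1,S_2)\in\gS_\cH\times\gS_\cH$. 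Lemma~\ref{L:GenConsonant} will then promote this to $g(U\sd_{\Ops(\widetilde{\cH})}V)\leq f(U)\sd_S f(V)\leq\gamma$, where the final inequality holds because $\gamma\in S$ is a witness for $f(U)\leq f(V)\vee\gamma$ and $\sd_S$ computes the least such witness inside $S$.

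The sublattice $S$ is built as a finite sublattice of $L$: start from $f[\Op(\cH)]\cup\set{\gamma}$, adjoin consonance witnesses for the relevant pairs (furnished by the complete normality of $L$), and close under $\vee,\wedge$; finiteness of $S$ automatically makes it a generalized dual Heyting algebra. The homomorphism $g$ and the hyperplane set $\widetilde{\cH}$ are produced by iterating Lemma~\ref{L:1stepForcCl} over the finitely many ordered pairs of non-complementary half-spaces in $\gS_\cH$ (complementary pairs being trivial, as $A^+\cap A^-=\es$ gives $A^+\sd A^-=A^+$). Each step adds a single new hyperplane of the form $\ker(a-mb)$ with $a,b\in\gL$, which keeps the cumulative set inside $\cH_\gL$, and extends the current homomorphism so as to realize the desired pseudo-difference relation at the level of $S$.

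The principal obstacle is the bookkeeping needed to make these two constructions compatible: each application of Lemma~\ref{L:1stepForcCl} requires the current homomorphism to be consonant into a generalized dual Heyting algebra, and every extension may enlarge the image so as to demand additional consonance witnesses and additional pseudo-differences in $S$. The delicate point is that all pseudo-differences used at each iteration must be computed in the \emph{same} ambient $S$, so that the generator inequalities produced at different stages still line up for the final invocation of Lemma~\ref{L:GenConsonant}. This is handled by a simultaneous construction of $\widetilde{\cH}$ and $S$: one may either enlarge $S$ step by step inside $L$ (still within $L$, by adjoining fresh witnesses guaranteed by complete normality), or fix a sufficiently closed finite $S$ in advance; in either case, finiteness of both the number of generator pairs and the number of new elements introduced per step ensures termination, yielding the required finite $\widetilde{\cH}\subseteq\cH_\gL$, finite $S\subseteq L$, and extension $g$.
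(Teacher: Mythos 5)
Your proposal follows essentially the same route as the paper's proof: the paper fixes an enumeration of all pairs of open half-spaces with boundary in~$\cH$, uses complete normality to build a chain $S_0\subseteq S_1\subseteq\cdots\subseteq S_n$ of finite sublattices of~$L$ with $S_0\supseteq f[\Op(\cH)]\cup\set{\gamma}$ and each~$S_i$ consonant in~$S_{i+1}$, applies Lemma~\ref{L:1stepForcCl} once per pair with target~$S_{k+1}$ (using that pseudo-differences computed in the larger~$S_{k+1}$ lie below those computed in~$S_1$, so that all generator inequalities are expressed against the single lattice~$S_1$), and then invokes Lemma~\ref{L:GenConsonant} and sets $W\eqdef U\sd_{\Ops(\widetilde{\cH})}V$, exactly as you describe. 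The one loose point is your dismissal of complementary pairs as trivial: the generator inequality there reads $f(A^+)\leq f(A^+)\sd_S f(A^-)$, which does not follow from $A^+\sd A^-=A^+$ alone (that only computes the left-hand side) but does follow from consonance via~\eqref{Eq:Breakf(x)-f(y)} --- or one can simply keep those pairs in the enumeration, as the paper does, since Lemma~\ref{L:1stepForcCl} applies to them with $b=-a$.
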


\begin{proof}
We may assume that~$\cH$ is nonempty.
Fix an enumeration~$(A_0,B_0)$, \dots, $(A_{n-1},B_{n-1})$ of all pairs of open half-spaces with boundary in~$\cH$.
Since~$L$ is completely normal, there is a finite chain $S_0\subseteq S_1\subseteq\cdots\subseteq S_n$ of finite sublattices of~$L$ such that~$S_0$ contains $f[\Op(\cH)]\cup\set{\gamma}$ and~$S_i$ is consonant in~$S_{i+1}$ whenever $0\leq i<n$.
We construct inductively an ascending chain $\cH=\cH_0\subseteq\cH_1\subseteq\cdots\subseteq\cH_n$ of finite subsets of~$\cH_{\gL}$, together with an ascending chain of lattice homomorphisms $f_l\colon\Op(\cH_l)\to S_l$, for $0\leq l\leq n$, such that~$f_0=f$ and
 \begin{equation}\label{Eq:IndStepHkk+1}
 f_k\pI{A_l\sd_{\Ops(\cH_k)}B_l}\leq f(A_l)\sd_{S_1}f(B_l)
 \quad\text{whenever }0\leq l<k\leq n\,.
 \end{equation}
For $k=0$ there is nothing to verify.
Suppose having performed the construction up to level~$k$, with $0\leq k<n$.
By applying Lemma~\ref{L:1stepForcCl}, with~$\cH_k$ in place of~$\cH$, $f_k$ in place of~$f$, $S_{k+1}$ (which is a finite distributive lattice, thus, \emph{a fortiori}, a dual Heyting algebra) in place of~$L$, and $(A_k,B_k)$ in place of $(A^+,B^+)$, we get a finite subset~$\cH_{k+1}$ of~$\cH_{\gL}$, containing~$\cH_k$, together with a lattice homomorphism $f_{k+1}\colon\Op(\cH_{k+1})\to S_{k+1}$, extending~$f_k$, such that
 \[
 f_{k+1}\pI{A_k\sd_{\Ops(\cH_{k+1})}B_k}
 =f(A_k)\sd_{S_{k+1}}f(B_k)\,.
 \]
Since~$S_{k+1}$ contains~$S_1$, it follows that
 \begin{equation}\label{Eq:fk-stepk}
 f_{k+1}\pI{A_k\sd_{\Ops(\cH_{k+1})}B_k}
 \leq f(A_k)\sd_{S_1}f(B_k)\,.
 \end{equation}
Since~$\Ops(\cH_k)$ is a sublattice of~$\Ops(\cH_{k+1})$ and since~$f_{k+1}$ extends~$f_k$, it follows from the induction hypothesis~\eqref{Eq:IndStepHkk+1} (with fixed~$k$) that
 \[
 f_{k+1}\pI{A_l\sd_{\Ops(\cH_{k+1})}B_l}\leq
 f(A_l)\sd_{S_1}f(B_l)
 \quad\text{whenever }0\leq l<k\,,
 \]
and hence, by~\eqref{Eq:fk-stepk},
 \[
 f_{k+1}\pI{A_l\sd_{\Ops(\cH_{k+1})}B_l}\leq
 f(A_l)\sd_{S_1}f(B_l)
 \quad\text{whenever }0\leq l<k+1\,,
 \]
therefore completing the verification of the induction step.

At stage~$n$, we obtain a finite subset~$\widetilde{\cH}=\cH_n$ of~$\cH_{\gL}$, containing~$\cH$, together with a homomorphism $g=f_n\colon\Op(\cH_n)\to S_n$, extending~$f$, such that
 \begin{equation}\label{Eq:fn2Hn2}
 f_n(A_k\sd_{\Ops(\cH_n)}B_k)\leq
 f(A_k)\sd_{S_1}f(B_k)
 \quad\text{whenever }0\leq k<n\,.
 \end{equation}
%
\begin{figure}[htb]
 \[
 \def\labelstyle{\displaystyle}
 \xymatrix{
 \Ops(\cH)\ar@{_(->}[rrr]
 \ar[d]^{\text{consonant}}_{f\res_{\Ops(\cH)}
 =f_n\res_{\Ops(\cH)}}
 &&&
 \Ops(\cH_n)\ar[d]^{f_n\res_{\Ops(\cH_n)}}\\
 S_1\ar@{^(->}[rrr] &&& S_n
 }
 \]
\caption{Illustrating the proof of Lemma~\ref{L:2stepForcCl}}
\label{Fig:n2extensions}
\end{figure}
Since the open half-spaces with boundary in~$\cH$ generate~$\Ops(\cH)$ as a lattice (cf. Lemma~\ref{L:OpscH}) and since every pair of such half-spaces has the form $(A_k,B_k)$, it follows from Lemma~\ref{L:GenConsonant}, applied to~\eqref{Eq:fn2Hn2} and the commutative square represented in Figure~\ref{Fig:n2extensions}, that
 \[
 f_n(X\sd_{\Ops(\cH_n)}Y)\leq
 f(X)\sd_{S_1}f(Y)\,,
 \quad\text{for all }X,Y\in\Ops(\cH)\,.
 \]
In particular, $f_n(U\sd_{\Ops(\cH_n)}V)\leq f(U)\sd_{S_1}f(V)\leq\gamma$.
Let $W\eqdef U\sd_{\Ops(\cH_n)}\nobreak V$.
\end{proof}

%

\section{Enlarging the range of a homomorphism}
\label{S:ExtSpecial}

Until the end of this section we shall fix a set~$I$ and consider the vector space $\EE=\RR^{(I)}$ with basis~$I$, endowed with the coarsest topology making all canonical projections $\gd_i\colon\EE\to\RR$ (for $i\in I$) continuous.
We denote by~$\gL$ the additive subgroup of~$\EE'$ generated by $\setm{\gd_i}{i\in I}$ and we set (using the notation in Lemma~\ref{L:2stepForcCl}) $\cH_{\ZZ}\eqdef\cH_{\gL}$, the set of all \emph{integral hyperplanes} of~$\EE$.
We shall also set $\Delta_i\eqdef\ker\gd_i$, $\Delta_i^+\eqdef\bck{\gd_i>0}$, and $\Delta_i^-\eqdef\bck{\gd_i<0}$.
Any hyperplane $H\in\cH_{\ZZ}$ is the kernel of a nonzero element $x=\sum_{i\in I}x_i\gd_i\in\gL$, with all $x_i\in\ZZ$ and the support $\supp(x)\eqdef\setm{i\in I}{x_i\neq0}$ finite.
Since~$x$ is determined up to a nonzero scalar multiple, $\supp(x)$ depends of~$H$ only, so we may denote it by~$\supp(H)$.
For a set~$\cH$ of integral hyperplanes of~$\RR^{(I)}$, we shall set $\supp(\cH)\eqdef\bigcup\vecm{\supp(H)}{H\in\cH}$.

For $x\in\RR^{(I)}$ and $S\subseteq I$, we shall denote by $x\res_S$ the restriction of~$x$ to~$S$ extended by zero on $I\setminus S$.

\begin{lemma}\label{L:DetRes2S}
Let~$\cH$ be a set of integral hyperplanes of~$\RR^{(I)}$, with support~$S$, and let $Z\in\Bool(\cH)$.
Then $x\in Z$ if{f} $x\res_S\in Z$, for all $x\in\RR^{(I)}$.
\end{lemma}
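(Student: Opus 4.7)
The plan is to reduce the statement to the case of a generating half-space, then extend to all of $\Bool(\cH)$ by a routine Boolean induction.

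First, I would unravel the hypothesis on $S$. By definition $S=\supp(\cH)=\bigcup_{H\in\cH}\supp(H)$, and for each $H\in\cH$ there exists a nonzero $y=\sum_{i\in I}y_i\gd_i\in\gL$, unique up to scalar, with $H=\ker(y)$ and $\supp(y)=\supp(H)\subseteq S$. The crucial point is that for every $x\in\RR^{(I)}$,
\[
y(x)=\sum_{i\in I}y_ix_i=\sum_{i\in S}y_ix_i=y(x\res_S)\,,
\]
since $y_i=0$ for $i\notin S$. Therefore $x\in H$ iff $x\res_S\in H$, and similarly $x\in H^{+}$ iff $x\res_S\in H^{+}$, and likewise for $H^-$, $\ol{H}^+$, and $\ol{H}^-$. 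In other words, each generator of $\Bool(\cH)$ listed in Notation~\ref{Not:BoolOpcH} belongs to the collection
\[
\cF\eqdef\setm{Z\subseteq\RR^{(I)}}{\forall x\in\RR^{(I)},\ x\in Z\Leftrightarrow x\res_S\in Z}\,.
\]

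It now suffices to observe that $\cF$ is a Boolean subalgebra of the powerset of $\RR^{(I)}$: the condition ``$x\in Z$ iff $x\res_S\in Z$'' is trivially preserved under finite unions, finite intersections, and complementation. Since $\cF$ contains $\gS_{\cH}\cup\ol{\gS}_{\cH}$ (and $\es,\RR^{(I)}$), and since $\Bool(\cH)$ is by definition the Boolean subalgebra generated by these sets, we get $\Bool(\cH)\subseteq\cF$, which is exactly the conclusion of the lemma.

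There is no real obstacle here; the only mildly subtle point is checking that the hyperplanes in $\cH$ may be represented by linear functionals supported in $S$, which is immediate from the definition of $\supp(H)$ since the representing functional is determined up to a nonzero scalar.
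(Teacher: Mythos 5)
Your proof is correct and follows essentially the same route as the paper: verify the equivalence on the generating open half-spaces via $p_H(x)=p_H(x\res_S)$ (since $p_H$ is supported in $S$), then note that the family of sets satisfying the equivalence is a Boolean subalgebra containing the generators. Your explicit introduction of the class $\cF$ just spells out what the paper dismisses as "the general result follows easily."
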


\begin{proof}
For each $H\in\cH$, pick $p_H\in\gL$ with kernel~$H$, and set $H^+\eqdef\bck{p_H>0}$, $H^-\eqdef\bck{p_H<0}$.
Then for every $x\in\RR^{(I)}$, $x\in H^+$ if{f} $p_H(x)>0$, if{f} $p_H(x\res_S)>0$, if{f} $x\res_S\in H^+$.
The proof for~$H^-$ is similar.
Since the~$H^+$ and~$H^-$ generate~$\Bool(\cH)$ as a Boolean algebra, the general result follows easily.
\end{proof}

\begin{lemma}\label{L:AddIndepHyp}
Let~$\cH$ be a set of integral hyperplanes of~$\RR^{(I)}$ and let $i\in I\setminus\supp(\cH)$.
We denote by $\gf\colon\Op(\cH)\hookrightarrow\Op(\cH)\ast\sJ_2$ and $\psi\colon\Op(\cH)\hookrightarrow\Op(\cH\cup\set{\Delta_i})$ the diagonal embedding and the inclusion map, respectively, and we set
$\eps(X,Y,Z)\eqdef(X\cap\Delta_i^+)\cup(Y\cap\Delta_i^-)\cup Z$, for all $(X,Y,Z)\in\Op(\cH)\ast\sJ_2$.
Then~$\eps$ is an isomorphism and $\psi=\eps\circ\gf$.
\end{lemma}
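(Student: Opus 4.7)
The plan is to identify $\eps$ as an instance of the universal map supplied by Lemma~\ref{L:DotimesJ2}, deduce that it is a $0,1$-lattice homomorphism essentially for free, then establish surjectivity from Corollary~\ref{C:ChangeH} and injectivity by a support argument based on Lemma~\ref{L:DetRes2S}.

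First I would observe that the inclusion $\psi\colon\Op(\cH)\hookrightarrow\Op(\cH\cup\set{\Delta_i})$ is a $0,1$-lattice homomorphism and that $\Delta_i^+\cap\Delta_i^-=\es$. Applying Lemma~\ref{L:DotimesJ2} to $\psi$, with $a\eqdef\Delta_i^+$ and $b\eqdef\Delta_i^-$, yields a unique $0,1$-lattice homomorphism $\Op(\cH)\ast\sJ_2\to\Op(\cH\cup\set{\Delta_i})$ whose defining formula matches exactly the map~$\eps$. Hence~$\eps$ is a lattice homomorphism. The identity $\psi=\eps\circ\gf$ is then immediate: for $X\in\Op(\cH)$,
\[
\eps(\gf(X))=\eps(X,X,X)=(X\cap\Delta_i^+)\cup(X\cap\Delta_i^-)\cup X=X=\psi(X),
\]
since the last summand absorbs the first two.

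Next I would establish surjectivity. By Corollary~\ref{C:ChangeH}, every element of $\Op(\cH\cup\set{\Delta_i})$ can be written as $(X\cap\Delta_i^+)\cup(Y\cap\Delta_i^-)\cup Z$ with $X,Y,Z\in\Op(\cH)$ and $Z\subseteq X$, $Z\subseteq Y$, i.e., as $\eps(X,Y,Z)$ with $(X,Y,Z)\in\Op(\cH)\ast\sJ_2$.

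The only nontrivial part is injectivity, and this is where I would use that $i\notin\supp(\cH)$. Set $S\eqdef\supp(\cH)$, and let $V\eqdef\eps(X,Y,Z)$ for $(X,Y,Z)\in\Op(\cH)\ast\sJ_2$. Using the disjointness of $\Delta_i^+$, $\Delta_i^-$, $\Delta_i$, and the containments $Z\subseteq X$, $Z\subseteq Y$, I would compute
\[
V\cap\Delta_i^+=X\cap\Delta_i^+\,,\quad V\cap\Delta_i^-=Y\cap\Delta_i^-\,,\quad V\cap\Delta_i=Z\cap\Delta_i\,.
\]
It then remains to see that each of $X$, $Y$, $Z$ is recovered from its intersection with the corresponding region. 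For~$Z$, note that every $x\in\RR^{(I)}$ satisfies $x\res_S\in\Delta_i$ (since $i\notin S$), so by Lemma~\ref{L:DetRes2S} applied to $Z\in\Op(\cH)\subseteq\Bool(\cH)$, we have $x\in Z$ iff $x\res_S\in Z\cap\Delta_i$; hence $Z$ is determined by $Z\cap\Delta_i=V\cap\Delta_i$. For~$X$, given $x\in\RR^{(I)}$, let $x_+$ agree with $x$ on $I\sd\set{i}$ and satisfy $(x_+)_i=1$. Then $x_+\in\Delta_i^+$ and $x_+\res_S=x\res_S$, so Lemma~\ref{L:DetRes2S} gives $x\in X$ iff $x_+\in X\cap\Delta_i^+=V\cap\Delta_i^+$; thus $X$ is determined by $V$. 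Symmetrically for~$Y$.

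The one step requiring care is the injectivity argument, and the entire content really does rest on the hypothesis $i\notin\supp(\cH)$: it is precisely this that makes the ``shifting'' trick $x\mapsto x_+$ work without altering membership in sets of $\Op(\cH)$. Once all three components are seen to be recoverable from~$V$, the map~$\eps$ is a bijective $0,1$-lattice homomorphism, hence a lattice isomorphism, completing the proof.
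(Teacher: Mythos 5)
Your proposal is correct and follows essentially the same route as the paper: Lemma~\ref{L:DotimesJ2} for the homomorphism property, Corollary~\ref{C:ChangeH} for surjectivity, and recovery of $X$, $Y$, $Z$ from $\eps(X,Y,Z)$ via Lemma~\ref{L:DetRes2S} using test points with prescribed $i$-th coordinate (the paper uses $t\res_S$ and $t\res_S\pm\gd_i$ where you use $x\res_S$ and $x_\pm$, an immaterial difference since $i\notin\supp(\cH)$). No gaps.
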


We illustrate Lemma~\ref{L:AddIndepHyp} on Figure~\ref{Fig:AddIndepHyp}.

\begin{figure}[htb]
 \[
 \def\labelstyle{\displaystyle}
 \xymatrix{
 & \Op(\cH)\ar@{^(->}[dl]_{\gf}\ar@{_(->}[dr]^{\psi} & \\
 \Op(\cH)\ast\sJ_2\ar[rr]_{\eps} && \Op(\cH\cup\set{\Delta_i})
  }
 \]
\caption{Illustrating Lemma~\ref{L:AddIndepHyp}}
\label{Fig:AddIndepHyp}
\end{figure}

\begin{proof}
It is obvious that~$\gf$ and~$\psi$ are both $0,1$-lattice homomorphisms, that $\eps$ is lattice homomorphism (use Lemma~\ref{L:DotimesJ2}), and $\psi=\eps\circ\gf$.
Moreover, it follows from Corollary~\ref{C:ChangeH} that~$\eps$ is surjective.

Set $S\eqdef\supp(\cH)$.
In order to prove that~$\eps$ is one-to-one, it is sufficient to prove that every triple $(X,Y,Z)\in\Op(\cH)\ast\sJ_2$ is determined by the set $T\eqdef\eps(X,Y,Z)$.
Let $t\in\RR^{(I)}$.
Then $t\res_S\in\Delta_i$, thus $t\res_S\in T$ if{f} $t\res_S\in Z$, if{f} $t\in Z$ (cf. Lemma~\ref{L:DetRes2S});
hence~$T$ determines~$Z$.
Likewise, $t\res_S+\gd_i\in\Delta_i^+$, thus $t\res_S+\gd_i\in T$ if{f} $t\res_S+\gd_i$ belongs to $X\cup Z=X$, if{f} (using again Lemma~\ref{L:DetRes2S}) $t\res_S\in X$, if{f} $t\in X$.
Symmetrically, $t\res_S-\gd_i\in T$ if{f} $t\in Y$.
Therefore, $T$ determines both~$X$ and $Y$.
\end{proof}

\begin{lemma}\label{L:Adjoin1elt}
Let~$\cH$ be a set of integral hyperplanes of~$\RR^{(I)}$ and let $i\in I\setminus\supp(\cH)$.
Let~$L$ be a bounded distributive lattice, and let $a,b\in L$ such that $a\wedge b=0$.
Then every $0,1$-lattice homomorphism $f\colon\Op(\cH)\to L$ extends to a unique $0,1$-lattice homomorphism $g\colon\Op(\cH\cup\set{\Delta_i})\to L$ such that $a=g(\Delta_i^+)$ and $b=g(\Delta_i^-)$.
\end{lemma}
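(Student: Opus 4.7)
The plan is to deduce this lemma immediately from the two structural results already established: Lemma~\ref{L:DotimesJ2}, which gives the universal property of the free distributive product $D\ast\sJ_2$, and Lemma~\ref{L:AddIndepHyp}, which identifies $\Op(\cH\cup\set{\Delta_i})$ with $\Op(\cH)\ast\sJ_2$ under precisely the hypothesis $i\notin\supp(\cH)$. Essentially, the new hyperplane~$\Delta_i$ is ``free'' over~$\cH$, so adjoining it to~$\cH$ at the lattice level is the same as forming the distributive product with~$\sJ_2$, and adjoining it at the homomorphism level is the same as prescribing the images of the two new atoms of~$\sJ_2$.

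Here is how I would carry it out. First, apply Lemma~\ref{L:DotimesJ2} to the given $0,1$-lattice homomorphism $f\colon\Op(\cH)\to L$ and to the pair $(a,b)\in L\times L$ with $a\wedge b=0$; this yields a unique $0,1$-lattice homomorphism
\[
h\colon\Op(\cH)\ast\sJ_2\to L,\qquad h(X,Y,Z)=\pI{f(X)\wedge a}\vee\pI{f(Y)\wedge b}\vee f(Z),
\]
satisfying $h(\EE,\es,\es)=a$, $h(\es,\EE,\es)=b$, and $h(X,X,X)=f(X)$ for all $X\in\Op(\cH)$. Second, invoke Lemma~\ref{L:AddIndepHyp}: since $i\notin\supp(\cH)$, the map
\[
\eps\colon\Op(\cH)\ast\sJ_2\;\xrightarrow{\ \cong\ }\;\Op(\cH\cup\set{\Delta_i}),\qquad\eps(X,Y,Z)=(X\cap\Delta_i^+)\cup(Y\cap\Delta_i^-)\cup Z,
\]
is a lattice isomorphism. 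Define $g\eqdef h\circ\eps^{-1}$.

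Then $g$ is a $0,1$-lattice homomorphism, and a direct computation of $\eps$ on the distinguished triples gives $\eps(\EE,\es,\es)=\Delta_i^+$, $\eps(\es,\EE,\es)=\Delta_i^-$, and $\eps(X,X,X)=X$ for every $X\in\Op(\cH)$; hence $g(\Delta_i^+)=a$, $g(\Delta_i^-)=b$, and $g$ extends~$f$. For uniqueness, note that by Corollary~\ref{C:ChangeH} the lattice $\Op(\cH\cup\set{\Delta_i})$ is generated by $\Op(\cH)\cup\set{\Delta_i^+,\Delta_i^-}$, so any lattice homomorphism on it is determined by its restriction to $\Op(\cH)$ together with its values at $\Delta_i^+$ and $\Delta_i^-$.

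There is no real obstacle here: the content has already been absorbed by the preceding two lemmas, and the only thing to check is the consistency of the constants $(\EE,\es,\es)$, $(\es,\EE,\es)$, $(X,X,X)$ across the two universal properties, which is a matter of unwinding definitions. The hypothesis $i\notin\supp(\cH)$ enters exclusively through Lemma~\ref{L:AddIndepHyp} (that is what makes $\eps$ injective, via Lemma~\ref{L:DetRes2S}), and the hypothesis $a\wedge b=0$ enters exclusively through Lemma~\ref{L:DotimesJ2}.
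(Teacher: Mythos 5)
Your proposal is correct and follows exactly the paper's own argument: transfer the problem through the isomorphism $\eps$ of Lemma~\ref{L:AddIndepHyp} and apply the universal property of $\Op(\cH)\ast\sJ_2$ from Lemma~\ref{L:DotimesJ2}. The paper merely phrases it as ``$g$ satisfies the required conditions if{f} $h\eqdef g\circ\eps$ satisfies the hypotheses of Lemma~\ref{L:DotimesJ2}'', which handles existence and uniqueness in one stroke, but the content is identical to yours.
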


\begin{proof}
Keep the notation of Lemma~\ref{L:AddIndepHyp}.
A homomorphism $g\colon\Op(\cH\cup\set{\Delta_i})\to L$ satisfies the given conditions if{f} the homomorphism $h\eqdef g\circ\eps\colon\Op(\cH)\ast\sJ_2\to L$ satisfies $h(X,X,X)=f(X)$ whenever $X\in\Op(\cH)$, $a=h(\RR^{(I)},\es,\es)$, and $b=h(\es,\RR^{(I)},\es)$.
Apply Lemma~\ref{L:DotimesJ2}.
\end{proof}

\section{Representing countable completely normal lattices}
\label{S:ReprCtbleCN}

This section is devoted to a proof of our main theorem (Theorem~\ref{T:ReprCN}), together with a short discussion of some of its corollaries.

\begin{theorem}\label{T:ReprCN}
Every countable completely normal distributive lattice with zero is isomorphic to~$\Idc G$, for some Abelian $\ell$-group~$G$.
\end{theorem}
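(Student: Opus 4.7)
The plan is to apply Lemma~\ref{L:FactClosed} with $G\eqdef\FL(\go)$: constructing a closed surjective lattice homomorphism $f\colon\Idc{\FL(\go)}\twoheadrightarrow L$ will yield $L\cong\Idc(\FL(\go)/I)$ for the corresponding $\ell$-ideal~$I$. Via the Baker-Beynon duality, $\Idc{\FL(\go)}$ is identified with $\Ops(\cH_{\ZZ})=\bigcup_{\cH}\Ops(\cH)$, the directed union ranging over the finite sets~$\cH$ of integral hyperplanes of $\RR^{(\go)}$. Adjoining a top if necessary, replace~$L$ by $L^+\eqdef L\cup\{\infty\}$ (still countable and completely normal), and build~$f$ as the direct limit of $0,1$-lattice homomorphisms $f_n\colon\Op(\cH_n)\to L^+$ along an ascending chain $\cH_0\subseteq\cH_1\subseteq\cdots$ of finite subsets of~$\cH_{\ZZ}$, subject to the invariant $f_n^{-1}(\infty)=\{\EE\}$; in the limit, $f$ will take values in~$L$.

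The inductive construction is a standard dovetailing over three agendas:
\begin{enumeratea}
\item an enumeration of $\cH_{\ZZ}$, so that $\bigcup_n\cH_n=\cH_{\ZZ}$, whence the final domain is all of $\Ops(\cH_{\ZZ})$;
\item an enumeration of all triples $(U,V,\gamma)$ with $\gamma\in L$ and $U,V$ ranging over the currently constructed $\Ops(\cH_n)$'s, so that every closure defect is corrected;
\item an enumeration of~$L$, so that every $c\in L$ eventually lies in the range.
\end{enumeratea}
Agenda~(a) is handled by Lemma~\ref{L:OpcHCNpureinOpH'}, which extends $f_n$ from $\Op(\cH_n)$ to $\Op(\cH_n\cup\{H\})$ for a prescribed $H\in\cH_{\ZZ}$. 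Agenda~(b) is handled by Lemma~\ref{L:2stepForcCl}: whenever $f_n(U)\leq f_n(V)\vee\gamma$, it produces a finite $\widetilde\cH\supseteq\cH_n$, an extension $f_{n+1}$, and a witness $W\in\Ops(\widetilde\cH)$ with $U\subseteq V\cup W$ and $f_{n+1}(W)\leq\gamma$. Agenda~(c) is handled by Lemma~\ref{L:Adjoin1elt} applied with a fresh coordinate $i\in\go\setminus\supp(\cH_n)$ and $(a,b)\eqdef(c,0)$, yielding an extension with $f_{n+1}(\Delta_i^+)=c$. The consonance hypothesis required by Lemmas~\ref{L:OpcHCNpureinOpH'} and~\ref{L:2stepForcCl} is automatic since the image of $f_n\res\Ops(\cH_n)$ lies in the completely normal~$L$, and direct inspection of the explicit extension formulas shows that the invariant $f_n^{-1}(\infty)=\{\EE\}$ is preserved at each step.

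Passing to the direct limit $f\colon\Ops(\cH_{\ZZ})\to L$, agenda~(a) makes $f$ total on $\Idc{\FL(\go)}\cong\Ops(\cH_{\ZZ})$, agenda~(c) makes it surjective, and agenda~(b) makes it closed (every triple $(U,V,\gamma)$ witnessing a defect of $f$ already appears at some finite stage). Lemma~\ref{L:FactClosed} then produces the desired isomorphism.

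The main obstacle is verifying that the three extension steps do not undermine one another: i.e., fixing one agenda item must never invalidate a previously fulfilled one. This is secured by the faithfulness of each extension on the previously constructed domain: a witness $W$ produced at stage~$n$ for a defect $(U,V,\gamma)$ continues to satisfy $f_m(W)\leq\gamma$ at every later stage $m\geq n$, since $f_m$ agrees with $f_n$ on $\Ops(\cH_n)$, and hyperplanes already added remain in $\cH_m$. A secondary complication is that steps of types~(a) and~(b) themselves enlarge the pool of triples to be examined, but the combined agenda remains countable, so a standard interleaving absorbs all of it in $\go$ many steps.
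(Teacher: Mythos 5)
Your proposal follows the paper's proof almost verbatim: the same three-way dovetailing (domain extension via Lemma~\ref{L:OpcHCNpureinOpH'}, closure-defect correction via Lemma~\ref{L:2stepForcCl}, range enlargement via Lemma~\ref{L:Adjoin1elt}), the same identification of $\Idc\FL(\go)$ with $\Ops(\cH_{\ZZ})$ by Baker--Beynon duality, and the same endgame via Lemma~\ref{L:FactClosed}. Your device of working in $L^+=L\cup\set{\infty}$ with the invariant $f_n^{-1}(\infty)=\set{\EE}$ is a repackaging of the paper's reduction to the bounded case, and your persistence argument for previously fulfilled agenda items is exactly the paper's implicit use of the fact that each $f_{n+1}$ extends~$f_n$.

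There is, however, one point where your justification does not meet the hypotheses of the lemma you invoke. Lemma~\ref{L:OpcHCNpureinOpH'} requires its codomain to be a \emph{generalized dual Heyting algebra}, and a countable completely normal distributive lattice need not be one (Example~\ref{Ex:IdcGnotdHagain} exhibits a countable Abelian $\ell$-group~$G$ for which $\Idc G$ is completely normal yet not a dual Heyting algebra; the same defect survives in~$L^+$). Saying that ``the consonance hypothesis is automatic since the image lies in the completely normal~$L$'' addresses only half of what is needed: consonance of the range is indeed free, but the pseudo-differences $f(p)\sd f(p_*)$ entering the definition of $f_*(H^{+})$ and $f_*(H^{-})$ must actually \emph{exist} in the codomain. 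The repair, which the paper carries out explicitly, is to note that the range of~$f_n$ is finite, so complete normality of~$L$ yields a \emph{finite} sublattice~$S$ of~$L$ containing that range together with consonance witnesses for all of its pairs; one then applies Lemma~\ref{L:OpcHCNpureinOpH'} with codomain~$S$ (a finite distributive lattice is a dual Heyting algebra) and composes with the inclusion $S\hookrightarrow L$. Note that ``consonant in~$S$'' is genuinely stronger than ``consonant in~$L$'', since the witnesses must lie in~$S$; this is why~$S$ must be chosen larger than the bare range of~$f_n$. No such repair is needed for your agenda~(b): Lemma~\ref{L:2stepForcCl} assumes only that~$L$ is completely normal and performs the finite-sublattice factorization internally.
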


\begin{proof}
We must represent a countable completely normal distributive lattice~$L$ with zero.
The lattice~$\ol{L}$, obtained from~$L$ by adding a new top element, is also completely normal, and~$L$ is an ideal of~$\ol{L}$.
Any representation of~$\ol{L}$ as~$\Idc\ol{G}$, for an Abelian $\ell$-group~$\ol{G}$, yields $L\cong\Idc G$ for the $\ell$-ide\-al $G\eqdef\setm{x\in\ol{G}}{\seq{x}_{\ol{G}}\in L}$ (cf. Bigard, Keimel, and Wolfenstein \cite[\S~2.3]{BKW}).
Hence, it suffices to consider the case where~$L$ is bounded, following the strategy described in Section~\ref{S:Strategy}.
Fix a generating subset $\setm{a_n}{n\in\go}$ of~$L$.

As in Section~\ref{S:ExtSpecial}, we shall denote by~$\gL$ the additive subgroup of $\pI{\RR^{(\go)}}'$ generated by the canonical projections $\gd_n\colon\RR^{(\go)}\to\RR$ (where $n<\go$), and we shall denote by $\cH_{\ZZ}=\cH_{\gL}=\setm{H_n}{n\in\go}$ the set of all integral hyperplanes of~$\RR^{(\go)}$.
Moreover, let $\setm{(U_n,V_n,\gamma_n)}{n\in\go}$ be an enumeration of all triples $(U,V,\gamma)$, where $U,V\in\Ops(\cH_{\ZZ})$ and~$\gamma\in L$.

We construct an ascending chain $\vecm{\cH_n}{n\in\go}$ of nonempty finite subsets of~$\cH_{\ZZ}$, with union~$\cH_{\ZZ}$, together with an ascending sequence $\vecm{f_n}{n\in\go}$ of $0,1$-lattice homomorphisms $f_n\colon\Op(\cH_n)\to L$, as follows.

Take~$\cH_0\eqdef\set{\Delta_0}$ (cf. Section~\ref{S:ExtSpecial}); so $\Op(\cH_0)=\set{\es,\Delta_0^+,\Delta_0^-,\Delta_0^+\cup\Delta_0^-,\RR^{(\go)}}$ is isomorphic to~$\sJ_2$ (cf. Section~\ref{S:HomExt}).
Let $f_0\colon\Op(\cH_0)\to\set{0,a_0,1}$ be the unique homomorphism such that $f_0(\Delta_0^+)=a_0$, $f_0(\Delta_0^-)=0$, and $f_0\pI{\RR^{(\go)}}=1$.

Suppose $f_n\colon\Op(\cH_n)\to L$ already constructed.

Let $n=3m$ for some integer~$m$, denote by~$k$ the first nonnegative integer outside $\supp(\cH_n)$, and set $\cH_{n+1}\eqdef\cH_n\cup\set{\Delta_k}$.
By Lemma~\ref{L:Adjoin1elt}, there is a unique lattice homomorphism $f_{n+1}\colon\Op(\cH_{n+1})\to L$, extending~$f_n$, such that $f_{n+1}(\Delta_k^+)=a_m$ and $f_{n+1}(\Delta_k^-)=0$.
This will take care of the surjectivity of the restriction, to $\Ops(\cH_{\ZZ})$, of the union of the~$f_n$.

Let $n=3m+1$ for some integer~$m$, and set $\cH_{n+1}\eqdef\cH_n\cup\set{H_m}$.
Since~$L$ is completely normal and the range of~$f_n$ is finite, there is a finite sublattice~$S$ of~$L$ such that the range of~$f_n$ is consonant in~$S$.
By Lemma~\ref{L:OpcHCNpureinOpH'}, $f_n$ extends to a lattice homomorphism~$f_{n+1}$ from~$\Op(\cH_{n+1})$ to~$S$, thus to~$L$.
This will take care of the union of all~$f_n$ be defined on~$\Op(\cH_{\ZZ})$.

Let, finally, $n=3m+2$ for some integer~$m$.
By iterating Lemma~\ref{L:2stepForcCl} finitely many times, we get a finite subset~$\cH_{n+1}$ of~$\cH_{\ZZ}$ containing~$\cH_n$, together with an extension $f_{n+1}\colon\Op(\cH_{n+1})\to L$, such that for every $k\leq n$, if $\set{U_k,V_k}\subseteq\Ops(\cH_n)$ and $f_n(U_k)\leq f_n(V_k)\vee\gamma_k$, then $f_{n+1}(U_k\sd_{\Ops(\cH_{n+1})}V_k)\leq\gamma_k$.
This will take care of the union of the~$f_n$ be closed (cf. Definition~\ref{D:closed}) on $\Ops(\cH_{\ZZ})$.

The union~$f$ of all the~$f_n$ is a surjective lattice homomorphism from~$\Op(\cH_{\ZZ})$ onto~$L$.
Furthermore, the restriction~$f^-$ of~$f$ to~$\Ops(\cH_{\ZZ})$ is a closed, surjective lattice homomorphism from~$\Ops(\cH_{\ZZ})$ onto~$L$.
Now it follows from the Baker-Beynon duality (cf. Lemmas~3.2 and~3.3, and Section~7, in Baker~\cite{Baker1968}) that~$\Idc\FL(\go)$ is isomorphic to the sublattice of $\RR^{\RR^{(\go)}}$ generated by $\setm{\bck{f>0}}{f\in\gL}$, that is, $\Ops(\cH_{\ZZ})$ (cf. Lemma~\ref{L:OpscH}).
Hence, the map~$f^-$ induces a closed, surjective lattice homomorphism $g\colon\Idc\FL(\go)\twoheadrightarrow L$.
By Lemma~\ref{L:FactClosed}, this map factors through an isomorphism from $\Idc(\FL(\go)/I)$ onto~$L$, for a suitable $\ell$-ide\-al~$I$ of~$\FL(\go)$.
\end{proof}

Recall that Delzell and Madden's results in~\cite{DelMad1994} imply that Theorem~\ref{T:ReprCN} does not extend to the uncountable case.

\begin{corollary}\label{C:ReprCN1}
A second countable generalized spectral space~$X$ is homeomorphic to the $\ell$-spec\-trum of an Abelian $\ell$-group if{f} it is completely normal.
\end{corollary}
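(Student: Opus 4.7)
The plan is to assemble the corollary directly from three ingredients already established in the paper: Lemma~\ref{L:Sp2Lat} (reducing the topological question to $\ell$-rep\-re\-sentability of $\cKo(X)$), Proposition~\ref{P:CNTop2Lat} (transferring complete normality between~$X$ and its lattice of compact opens), and the main Theorem~\ref{T:ReprCN} (providing $\ell$-rep\-re\-sentability in the countable case).

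For the forward implication, if $X\cong\Specl G$ for some Abelian $\ell$-group~$G$, then complete normality of~$X$ is the classical result of Bigard, Keimel, and Wolfenstein recalled in Section~\ref{S:Intro}, so nothing further is needed.

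For the converse, I would assume $X$ second countable, generalized spectral, and completely normal, and first verify that $\cKo(X)$ is countable. Fixing a countable basis~$\cB$ of~$X$, any compact open subset~$K$ of~$X$ satisfies $K=\bigcup\setm{B\in\cB}{B\subseteq K}$, and by compactness this union reduces to a finite subunion. Hence each $K\in\cKo(X)$ is determined by a finite subset of~$\cB$, so $\cKo(X)$ is at most countable. Proposition~\ref{P:CNTop2Lat} then tells me that this countable distributive lattice with zero is completely normal, and Theorem~\ref{T:ReprCN} supplies an Abelian $\ell$-group~$G$ with $\cKo(X)\cong\Idc G$; equivalently, $\cKo(X)$ is $\ell$-rep\-re\-sentable. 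Finally, the ``if'' direction of Lemma~\ref{L:Sp2Lat} yields $X\cong\Specl G$.

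The argument is essentially bookkeeping once the three cited results are in hand; the only step requiring any real verification is the countability of~$\cKo(X)$, and even that is elementary. I do not anticipate any substantive obstacle beyond assembling the pieces correctly.
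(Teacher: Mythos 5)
Your proposal is correct and matches the paper's own proof, which likewise observes that second countability plus compactness makes $\cKo(X)$ countable and then applies Theorem~\ref{T:ReprCN} and Lemma~\ref{L:Sp2Lat} (with Proposition~\ref{P:CNTop2Lat} implicitly mediating the complete normality transfer). Your spelled-out verification that each compact open set is a finite union of basic opens is exactly the ``easy application of compactness'' the paper leaves to the reader.
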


\begin{proof}
Since~$X$ is second countable, an easy application of compactness shows that~$\cKo(X)$ is countable.
Apply Theorem~\ref{T:ReprCN} and Lemma~\ref{L:Sp2Lat}.
\end{proof}

It is well known that the lattice~$\cC(G)$, of all convex $\ell$-subgroups of any $\ell$-group (not necessarily Abelian)~$G$, is the ideal lattice a completely normal distributive lattice with zero (see Iberkleid, Mart{\'{\i}}nez, and McGovern \cite[\S~1.2]{IMM2011} for a short overview).
Of course, in the Abelian case, $\cC(G)$ is isomorphic to the ideal lattice of~$\Idc{G}$.
A direct application of Theorem~\ref{T:ReprCN} yields the following.

\begin{corollary}\label{C:ReprCN2}
For every countable $\ell$-group~$G$, there exists a countable Abelian $\ell$-group~$A$ such that $\cC(G)\cong\cC(A)$.
\end{corollary}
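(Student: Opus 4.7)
The plan is to pass from $G$ to an associated countable completely normal distributive lattice with zero, apply Theorem~\ref{T:ReprCN} to produce a countable Abelian $\ell$-group $A$, and then recover $\cC(G)$ from $\Idc A$ on the Abelian side via the isomorphism $\cC(A)\cong\Id(\Idc A)$ recalled just above the corollary.

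First, let $D$ be the join-semilattice of all \emph{principal} (equivalently, compact) convex $\ell$-subgroups of $G$; this is the compact part of the algebraic lattice $\cC(G)$. By the remark recalled immediately before Corollary~\ref{C:ReprCN2}, $D$ is a distributive lattice with zero, is completely normal, and satisfies $\cC(G)\cong\Id D$. Moreover, since each principal convex $\ell$-subgroup of $G$ is determined by a single element of $G$ and $G$ is countable, the lattice $D$ is countable.

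Second, apply Theorem~\ref{T:ReprCN} to $D$: there exists an Abelian $\ell$-group $A$ with $\Idc A\cong D$. Inspecting the proof of Theorem~\ref{T:ReprCN}, the group $A$ is produced as a quotient $\FL(\go)/I$ of the free Abelian $\ell$-group on countably many generators, hence is itself countable. Since $A$ is Abelian, the Abelian case of the remark preceding the corollary gives $\cC(A)\cong\Id(\Idc A)$, so
\[
\cC(A)\;\cong\;\Id(\Idc A)\;\cong\;\Id D\;\cong\;\cC(G),
\]
and $A$ is a countable Abelian $\ell$-group, as required.

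There is essentially no obstacle beyond Theorem~\ref{T:ReprCN} itself: the only point requiring attention is that the group $A$ returned by the theorem can be taken countable. This is immediate from the construction (as $A$ is a quotient of the countable group $\FL(\go)$), so the corollary is genuinely an unpacking of the main theorem once $\cC(G)$ is identified with the ideal lattice of its countable compact part $D$.
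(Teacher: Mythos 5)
Your proposal is correct and is exactly the argument the paper intends: the paper gives no written proof beyond ``a direct application of Theorem~\ref{T:ReprCN}'', preceded by the two facts you use, namely that $\cC(G)$ is the ideal lattice of a (here countable) completely normal distributive lattice with zero and that $\cC(A)\cong\Id(\Idc A)$ in the Abelian case. Your observation that the $A$ produced by Theorem~\ref{T:ReprCN} is countable, being a quotient of $\FL(\go)$, is the right way to secure the countability clause.
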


The results of Kenoyer~\cite{Kenoy1984} and McCleary~\cite{McClear1986} imply that Corollary~\ref{C:ReprCN2} does not extend to the uncountable case.

The real spectrum~$\Specr{R}$, of any commutative unital ring~$R$, is a completely normal spectral space (cf. Coste and Roy~\cite{CosRoy1982}, Dickmann~\cite{Dickm1985}).
A direct application of Corollary~\ref{C:ReprCN1} yields the following.

\begin{corollary}\label{C:ReprCN3}
For every countable, commutative, unital ring~$R$, there exists a countable Abelian $\ell$-group~$A$ with unit such that $\Specr{R}\cong\Specl{A}$.
\end{corollary}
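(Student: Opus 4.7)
The plan is to reduce the statement to Theorem~\ref{T:ReprCN} applied to the lattice $L\eqdef\cKo(X)$, where $X\eqdef\Specr R$. Two preliminary facts about $X$ are needed. First, $X$ is a completely normal spectral space; this is the Coste--Roy/Dickmann result recalled just above. Second, and this is where the countability of $R$ enters, $X$ is second countable: the sets $\setm{P\in X}{r(P)>0}$, indexed by $r\in R$, form a subbasis of the topology of~$X$, and taking finite intersections among them yields a countable basis.

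With these two facts in hand, I would observe that $L$ is bounded (because $X$ is compact), countable (by an easy compactness argument, exactly as in the proof of Corollary~\ref{C:ReprCN1}), and completely normal (by Proposition~\ref{P:CNTop2Lat}). Applying Theorem~\ref{T:ReprCN} to $L$, and inspecting its proof, produces an Abelian $\ell$-group of the form $A\cong\FL(\go)/I$ together with an isomorphism $\Idc A\cong L$. Since $\FL(\go)$ is countable, so is~$A$.

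The only nontrivial extra ingredient, and the part I expect to be the main obstacle, is to extract a strong order unit for $A$; this is not literally in the statement of Theorem~\ref{T:ReprCN} but should fall out of its proof. The surjective closed \jh\ $g\colon\Idc\FL(\go)\twoheadrightarrow L$ built there necessarily hits the top element $1_L$ of~$L$ (which exists precisely because $X$ is compact), so one can pick $u\in\FL(\go)^+$ with $g(\seq{u}_{\FL(\go)})=1_L$; the image $\bar u$ of $u$ in $A$ then satisfies $\seq{\bar u}_A=A$, i.e., $\bar u$ is a strong order unit. Once the unit is secured, Stone duality together with Lemma~\ref{L:Sp2Lat} lifts $\Idc A\cong\cKo(X)$ to a homeomorphism $\Specl A\cong\Specr R$, completing the proof.
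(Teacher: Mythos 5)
Your argument is correct and matches the paper's, which simply invokes Corollary~\ref{C:ReprCN1} after recalling that $\Specr{R}$ is a completely normal spectral space (Coste--Roy, Dickmann) and observing, as you do, that countability of~$R$ yields second countability of~$\Specr{R}$, with countability of~$A$ coming from the construction as a quotient of~$\FL(\go)$. The only superfluous step is your extraction of the unit from the proof of Theorem~\ref{T:ReprCN}: since $X$ is compact, $L=\cKo(X)$ has a top element, and any~$A$ with $\Idc{A}\cong L$ then satisfies $A=\seq{u}_A$ for the element~$u$ whose principal $\ell$-ideal corresponds to~$1_L$, so the strong unit comes for free.
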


We prove in~\cite{MVRS} that Corollary~\ref{C:ReprCN3} does not extend to the uncountable case.

\section{Non-$\ell$-rep\-re\-sentabil\-ity results}
\label{S:NotEltary}

In this section we shall show that the class of $\ell$-rep\-re\-sentable distributive lattices is neither first-order, nor closed under infinite products (resp., homomorphic images).
All our non-$\ell$-rep\-re\-sentabil\-ity results will rely on the following concept.
We say that a distributive lattice~$D$ has \emph{countably based differences} if for all $a,b\in D$, the filter $a\ominus_Db$ (cf.~\eqref{Eq:Defominus}) is countably generated.
The following result is a restatement, in terms of lattices of principal $\ell$-ide\-als, of Cignoli, Gluschankof, and Lucas \cite[Thm.~2.2]{CGL}; see also Iberkleid, Mart{\'{\i}}nez, and McGovern \cite[Prop.~4.1.2]{IMM2011}.

\begin{lemma}\label{L:aleqb+cn}
Let~$G$ be an Abelian $\ell$-group.
Then the lattice~$\Idc G$ has countably based differences.\end{lemma}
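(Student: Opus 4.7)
The plan is to unwind the definitions directly and exhibit an explicit countable cofinal family in every difference filter $a\ominus_{\Idc G}b$. First, I would reduce to convenient representatives: every element of $\Idc G$ has the form $\seq{x}_G$ for some $x\in G^+$, so write $a=\seq{a_0}_G$ and $b=\seq{b_0}_G$ with $a_0,b_0\in G^+$, and consider an arbitrary candidate $\seq{z}_G$ (with $z\in G^+$).

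Next, I would translate the relation $a\leq b\vee\seq{z}_G$ into a concrete inequality inside~$G$. By the standard description of the principal $\ell$-ideal generated by a positive element, $\seq{a_0}_G\subseteq\seq{b_0}_G\vee\seq{z}_G=\seq{b_0+z}_G$ means that there exists a positive integer~$n$ with $a_0\leq n(b_0+z)=nb_0+nz$, and, since $z\geq0$ and $G$ is an $\ell$-group, this is equivalent to $(a_0-nb_0)^+\leq nz$, which in turn is equivalent to $\seq{(a_0-nb_0)^+}_G\subseteq\seq{z}_G$.

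Consequently, the countable family
\[
\bC\eqdef\setm{\seq{(a_0-nb_0)^+}_G}{n\in\NN\sd\set{0}}
\]
is contained in $a\ominus_{\Idc G}b$ (take $z=(a_0-nb_0)^+$ itself), and the computation above shows that every element of $a\ominus_{\Idc G}b$ lies above some member of~$\bC$. Hence $\bC$ generates $a\ominus_{\Idc G}b$ as a filter, proving that the filter is countably generated.

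There is no real obstacle here; the only point that requires a moment of care is the passage from $a_0\leq n(b_0\vee z)$ to $(a_0-nb_0)^+\leq nz$, which uses only that $G$ is an $\ell$-group and $z\geq 0$, and which has already been used in the proof of Proposition~\ref{P:Idcf_closed}. Note also that the argument is independent of the choice of representatives $a_0,b_0$: a different choice would only replace~$\bC$ by a cofinal subfamily, which remains countable.
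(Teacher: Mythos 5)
Your argument is correct, and it is complete: the reduction to positive generators, the translation of $\seq{a_0}_G\subseteq\seq{b_0}_G\vee\seq{z}_G$ into $a_0\leq nb_0+nz$ for some $n\geq1$, the equivalence with $(a_0-nb_0)^+\leq nz$ (using $z\geq0$), and the resulting countable decreasing chain $\seq{(a_0-nb_0)^+}_G$ as a basis of $a\ominus_{\Idc G}b$ are all valid, and the verification that each $\seq{(a_0-nb_0)^+}_G$ itself belongs to the filter is the easy inequality $a_0\leq nb_0+n(a_0-nb_0)^+$. The one point of comparison with the paper is that the paper does not prove this lemma at all: it is presented as a restatement of Cignoli--Gluschankof--Lucas (Theorem~2.2 of~\cite{CGL}; see also \cite[Prop.~4.1.2]{IMM2011}), so you have supplied the direct verification that the paper outsources to the literature. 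Your computation is exactly the one already used in the paper in the proofs of Lemma~\ref{L:FactClosed} and Proposition~\ref{P:Idcf_closed}, so the self-contained route costs nothing and makes the source of countability (the single integer parameter~$n$) explicit. Two cosmetic remarks: the displayed equivalence ``$(a_0-nb_0)^+\leq nz$ iff $\seq{(a_0-nb_0)^+}_G\subseteq\seq{z}_G$'' is really only an implication in the direction you need (the converse would give $(a_0-nb_0)^+\leq mz$ for some possibly different~$m$, which is harmless); and in your closing paragraph you wrote $a_0\leq n(b_0\vee z)$ where you mean $a_0\leq n(b_0+z)$ --- for positive elements the two conditions are interchangeable up to adjusting~$n$, but the additive form is the one your computation uses.
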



\begin{examplepf}\label{Ex:IdcGnotdHagain}
A countable Abelian $\ell$-group~$G$, with unit, such that $\Idc{G}$ is not a dual Heyting algebra.
\end{examplepf}

\begin{proof}
Let~$G$ consist of all maps $x\colon\go\to\ZZ$ such that there are (necessarily unique) $\ga,\gb\in\ZZ$ such that $x(n)=\ga n+\gb$ for all large enough~$n$.
Then~$G$, ordered componentwise, is an $\ell$-subgroup of~$\ZZ^{\go}$.
The constant function~$a$, with value~$1$, and the identity function~$b$ on~$\go$, both belong to~$G^+$, $a+b$ is a unit of~$G$, and there is no least~$\bx\in\Idc G$ such that $\seq{b}\subseteq\seq{a}\vee\bx$.
\end{proof}

It is easy to see that the class of all $\ell$-rep\-re\-sentable distributive lattices is closed under finite cartesian products.
We shall now show that this observation does not extend to infinite products.

\begin{proposition}\label{P:nondHago}
Let~$D$ be a distributive lattice with zero.
If~$D$ is not a generalized dual Heyting algebra, then~$D^{\go}$ is not $\ell$-rep\-re\-sentable.
\end{proposition}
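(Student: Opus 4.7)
The plan is to argue by contradiction, using Lemma~\ref{L:aleqb+cn} to translate $\ell$-rep\-re\-sentabil\-ity of~$D^{\go}$ into a countable-generation property of its difference filters, then defeat that property by a coordinatewise diagonal construction.

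First I would pick witnesses $a,b\in D$ for the failure of the generalized dual Heyting condition, so that the filter $a\ominus_Db$ has no least element. Since~$D$ is distributive, this filter is closed under binary meets, hence is a sub-meet-semilattice of~$D$ with no minimum. Passing to the constant sequences $\ba\eqdef(a,a,\dots)$ and $\bb\eqdef(b,b,\dots)$ in~$D^{\go}$, a coordinatewise computation identifies
 \[
 \ba\ominus_{D^{\go}}\bb
 =\setm{(y_k)_{k<\go}\in D^{\go}}
 {y_k\in a\ominus_Db\text{ for every }k<\go}\,.
 \]

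Now suppose, for a contradiction, that $D^{\go}\cong\Idc{G}$ for some Abelian $\ell$-group~$G$. Lemma~\ref{L:aleqb+cn} then furnishes a countable family $\bx^{(n)}=(x^{(n)}_k)_{k<\go}$, with $n<\go$, generating the filter $\ba\ominus_{D^{\go}}\bb$. The heart of the argument is a diagonal defeat: enumerate the nonempty finite subsets of~$\go$ as $(F_k)_{k<\go}$; then, for each~$k$, the meet $z_k\eqdef\bigwedge_{n\in F_k}x^{(n)}_k$ still lies in $a\ominus_Db$ and, since that filter has no minimum, one may pick $y_k\in a\ominus_Db$ with $y_k\ngeq z_k$. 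Set $\by\eqdef(y_k)_{k<\go}$. Then $\by\in\ba\ominus_{D^{\go}}\bb$, while for every nonempty finite $F\subseteq\go$, choosing~$k$ with $F_k=F$ yields $y_k\ngeq\bigwedge_{n\in F}x^{(n)}_k$, hence $\by\ngeq\bigwedge_{n\in F}\bx^{(n)}$. If~$D$ happens to have a top, the relation $y_0\ngeq z_0$ also forces~$\by$ to differ from the top of~$D^{\go}$, so~$\by$ escapes the filter generated by $\setm{\bx^{(n)}}{n<\go}$ altogether; this is the desired contradiction.

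The main obstacle is conceptual rather than technical: one must see that although $a\ominus_Db$ may itself be countably generated inside~$D$, the passage to the product $D^{\go}$ amplifies ``no minimum'' into ``no countable generating set'' via an elementary diagonalization over the nonempty finite subsets of~$\go$. Once the coordinatewise identification of $\ba\ominus_{D^{\go}}\bb$ is in hand, the remainder is routine.
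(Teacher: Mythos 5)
Your proof is correct, and it follows the same overall strategy as the paper: assume $D^{\go}$ is $\ell$-rep\-re\-sentable, apply Lemma~\ref{L:aleqb+cn} to get a countable generating family for $\ba\ominus_{D^{\go}}\bb$ (where $\ba,\bb$ are the diagonal images of witnesses $a,b$ for the failure of the generalized dual Heyting condition), and kill it by diagonalization. The diagonalization itself is organized differently, though. The paper first notes that $D$, being isomorphic to an ideal of $D^{\go}$, is itself $\ell$-rep\-re\-sentable, so $a\ominus_Db$ has a countable decreasing basis $\vecm{c_n}{n\in\go}$; the diagonal element is then $\vecm{c_{f(n,n)+1}}{n\in\go}$, sitting one step below the diagonal of that basis. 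You use only that $a\ominus_Db$ has no least element: by enumerating the nonempty finite subsets of~$\go$ and dodging, at coordinate~$k$, the meet $\bigwedge_{n\in F_k}x^{(n)}_k$ (which lies in $a\ominus_Db$ by distributivity), you produce an element of $\ba\ominus_{D^{\go}}\bb$ that lies above no nonempty finite meet of the alleged generators. This buys a slightly stronger intermediate fact --- $\ba\ominus_{D^{\go}}\bb$ is not countably generated whenever $a\ominus_Db$ has no least element, with no need for $a\ominus_Db$ itself to be countably generated --- at the cost of enumerating finite subsets rather than working with a single decreasing basis. One cosmetic remark: your aside about the top element is superfluous, since membership in the filter generated by a nonempty family already requires lying above some nonempty finite meet of its members, and your $\by$ lies above none.
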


\begin{proof}
Denote by $\eps\colon D\hookrightarrow D^{\go}$ the diagonal embedding and suppose that~$D^{\go}$ is $\ell$-rep\-re\-sentable.
Since~$D$ is isomorphic to an ideal of~$D^{\go}$, it is also $\ell$-rep\-re\-sentable, thus, by Lemma~\ref{L:aleqb+cn}, $D$ has countably based differences.
On the other hand, since~$D$ is not a generalized dual Heyting algebra, there are $a,b\in D$ such that $a\ominus_Db$ has no least element.
The filter~$a\ominus_Db$ has a countably basis $\vecm{c_n}{n\in\go}$, with each $c_{n+1}\leq c_n$.

Now by Lemma~\ref{L:aleqb+cn}, $D^{\go}$ has countably based differences.
In particular, the filter $\eps(a)\ominus_{D^{\go}}\eps(b)$ has a countable basis $\vecm{\be_n}{n\in\go}$ with each $\be_{n+1}\leq\be_n$.
For all $n,k\in\go$, $a\leq b\vee\be_n(k)$, thus there exists $f(n,k)\in\go$ such that $c_{f(n,k)}\leq\be_n(k)$.
Set $\bx\eqdef\vecm{c_{f(n,n)+1}}{n\in\go}$.
Since $\eps(a)\leq\eps(b)\vee\bx$, there exists $n\in\go$ such that $\be_n\leq\bx$.
It follows that $c_{f(n,n)}\leq\be_n(n)\leq\bx(n)=c_{f(n,n)+1}$, a contradiction.
\end{proof}

By taking $D\eqdef\Idc{G}$, for the $\ell$-group of Example~\ref{Ex:IdcGnotdHagain}, we get

\begin{corollary}\label{C:nondHago}
The class of all $\ell$-rep\-re\-sentable bounded distributive lattices is not closed under infinite products.
\end{corollary}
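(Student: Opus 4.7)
The plan is to exhibit the required counterexample as an $\go$-fold cartesian power of the single lattice supplied by Example~\ref{Ex:IdcGnotdHagain}. Concretely, let $G$ be the countable Abelian $\ell$-group with unit constructed there, and set $D\eqdef\Idc G$. Because $G$ has a unit, $D$ is bounded, and by the very definition of $\ell$-rep\-re\-sent\-a\-bil\-ity the lattice $D=\Idc G$ is $\ell$-rep\-re\-sent\-a\-ble. The power $D^{\go}$ is then a bounded distributive lattice every factor of which lies in the class of bounded $\ell$-rep\-re\-sent\-a\-ble lattices, so it is an admissible test case for closure under infinite products.

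Next, I would read off from Example~\ref{Ex:IdcGnotdHagain} that $D$ is not a generalized dual Heyting algebra. With $a,b\in G^+$ as in the example (the constant function $1$ and the identity function on~$\go$), the witness provided is precisely an instance where the filter $\seq{b}_G\ominus_D\seq{a}_G$ has no least element, so~$\sd_D$ is not everywhere defined on $D$. This is exactly the negation of the hypothesis ``generalized dual Heyting algebra'' that appears in Proposition~\ref{P:nondHago}.

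With these pieces in place the conclusion is immediate: applying Proposition~\ref{P:nondHago} to~$D$ gives that $D^{\go}$ is not $\ell$-rep\-re\-sent\-a\-ble, whereas each factor is a bounded $\ell$-rep\-re\-sent\-a\-ble distributive lattice. Hence the class of bounded $\ell$-rep\-re\-sent\-a\-ble distributive lattices is not closed under $\go$-fold products, and \emph{a fortiori} not under arbitrary infinite products.

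There is essentially no obstacle beyond assembling the preceding results correctly; the only point that deserves a moment's care is the terminological match between Example~\ref{Ex:IdcGnotdHagain} (phrased with ``dual Heyting algebra'') and Proposition~\ref{P:nondHago} (phrased with ``generalized dual Heyting algebra''). Since $D$ is bounded this distinction collapses, so the example supplies exactly the hypothesis needed to invoke the proposition.
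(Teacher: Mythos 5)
Your proposal is correct and is exactly the paper's argument: take $D\eqdef\Idc{G}$ for the $\ell$-group of Example~\ref{Ex:IdcGnotdHagain} and apply Proposition~\ref{P:nondHago}. Your remark that the bounded case collapses the distinction between ``dual Heyting algebra'' and ``generalized dual Heyting algebra'' is a sensible clarification of a point the paper leaves implicit.
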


Our next example involves the infinitary logic~$\scL_{\infty,\go}$, for which we refer the reader to Keisler and Knight~\cite{KeKn2004} (see also Bell~\cite{Bell2016}), of which we will adopt the terminology, in particular about back-and-forth families.
We say that a submodel~$M$, of a model~$N$, is an \emph{$\scL_{\infty,\go}$-el\-e\-men\-tary submodel} of~$N$, if for every~$\scL_{\infty,\go}$ sentence~$\gf$, with (finitely many, by definition of a sentence) parameters from~$M$, $M$ satisfies~$\gf$ if{f}~$N$ does.
Our example will show that there is no class of $\scL_{\infty,\go}$ sentences whose class of models is the one of all $\ell$-rep\-re\-sentable bounded distributive lattices.
As customary, we denote by~$\go_1$ the first uncountable ordinal.

\begin{examplepf}\label{Ex:Not1Ord}
A non-$\ell$-rep\-re\-sentable bounded distributive lattice~$\bD_{\go_1}$, of cardinality~$\aleph_1$, with a countable $\ell$-rep\-re\-sentable $\scL_{\infty,\go}$-el\-e\-men\-tary sublattice~$\bD_{\go,\go_1}$.
\end{examplepf}

\begin{proof}
For any sets~$I$ and~$J$ with $I\subseteq J$, we denote by~$\fin{I}$ the set of all finite subsets of~$I$, and we set
 \begin{align*}
 \bB_J&\eqdef\setm{X\subseteq J}
 {\text{either }X\text{ or }J\setminus X\text{ is finite}}\,,\\
 \bD_{I,J}&\eqdef\setm{(X,k)\in\bB_J\times\three}
 {(k=0\Rightarrow X\in\fin{I})\text{ and }
 (k\neq0\Rightarrow J\setminus X\in\fin{I})}\,,\\
 \bD_J&\eqdef\bD_{J,J}\,.
 \end{align*}
(Observe, in particular, that if~$J$ is finite, then $\bD_J=\bB_J\times\three$.)
We endow~$\bD_J$ and~$\bD_{I,J}$ with their componentwise orderings (i.e., $(X,k)\leq(Y,l)$ if $X\subseteq Y$ and $k\leq l$).
They are obviously bounded distributive lattices.
Further, we set
 \[
 \eps_{I,J}(X,k)\eqdef\begin{cases}
 (X,k)\,,&\text{if }k=0\,,\\
 (X\cup(J\setminus I),k)\,,&\text{if }k\neq0\,, 
 \end{cases}
 \quad\text{for any }(X,k)\in\bD_I\,.
 \]
For any sets~$I$ and~$J$ and any bijection $f\colon I\to J$, the map $\ol{f}\colon\bD_I\to\bD_J$, $(X,k)\mapsto(f[X],k)$ is a lattice isomorphism.
The following claim states some elementary properties of the maps~$\eps_{I,J}$ and~$\ol{f}$;
its proof is straightforward and we omit it.

\setcounter{claim}{0}

\goodbreak
\begin{claim}\label{Cl:Eltaryeps}\hfill
\begin{enumerater}
\item
For any sets $I\subseteq J$, $\bD_{I,J}$ is a bounded sublattice of~$\bD_J$ and~$\eps_{I,J}$ defines an isomorphism from~$\bD_I$ onto~$\bD_{I,J}$.

\item
The maps~$\eps_{I,J}$ form a direct system: that is, $\eps_{I,I}=\id_{\bD_I}$ and $\eps_{I,K}=\eps_{J,K}\circ\eps_{I,J}$ whenever $I\subseteq J\subseteq K$.

\item
For any set~$J$, the set~$\bD_J$ is the ascending union of all subsets~$\bD_{I,J}$, for $I\in\fin{J}$.

\item
Let $I'$, $I''$, $J'$, $J''$ be sets with $I'\subseteq I''$ and $J'\subseteq J''$, let $g\colon I''\to J''$ be a bijection with $g[I']=J'$, and let~$f$ be the domain-range restriction of~$g$ from~$I'$ onto~$J'$.
Then $\ol{g}\circ\eps_{I',I''}=\eps_{J',J''}\circ\ol{f}$.

\end{enumerater}
\end{claim}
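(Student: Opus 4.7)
The claim is a bookkeeping statement with four routine but non-trivial parts, all of which reduce to case analysis on the second coordinate $k\in\three$ (distinguishing $k=0$ from $k>0$) since the definitions of $\bD_{I,J}$ and $\eps_{I,J}$ branch there. My plan is to dispatch them in the order stated, and in each case the key technical observation is that the elements $(X,k)\in\bD_I$ with $k>0$ satisfy $X\subseteq I$ with $I\setminus X\in\fin{I}$, while those with $k=0$ have $X\in\fin{I}$; this asymmetry is exactly what the $\eps_{I,J}$ are designed to reconcile when $I$ is replaced by a superset $J$.

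For part~(1), I will first verify that $\bD_{I,J}$ is closed under the componentwise meet and join of $\bD_J$: the second coordinates pose no issue since $\three$ is a chain, and for the first coordinate the three cases ($k=l=0$; exactly one of $k,l$ is $0$; both $k,l>0$) each follow from the fact that $\fin{I}$ is closed under finite unions and subsets, together with the identities $J\setminus(X\cup Y)=(J\setminus X)\cap(J\setminus Y)$ and $J\setminus(X\cap Y)=(J\setminus X)\cup(J\setminus Y)$. The bounds $(\es,0)$ and $(J,2)$ manifestly lie in $\bD_{I,J}$. That $\eps_{I,J}$ lands in $\bD_{I,J}$ is immediate from the computation $J\setminus(X\cup(J\setminus I))=I\setminus X$. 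To see $\eps_{I,J}$ is a lattice homomorphism, I check the join and meet case by case on the values of $k$ and $l$; the only mildly delicate verification is that, when $k=0$ and $l>0$, one has $X\cap(J\setminus I)=\es$ because $X\in\fin{I}$ forces $X\subseteq I$, making the meet compute correctly. Injectivity is trivial by intersecting with $I$, and surjectivity onto $\bD_{I,J}$ uses the observation that if $k>0$ and $(X,k)\in\bD_{I,J}$, then $J\setminus X\in\fin{I}$ forces $J\setminus I\subseteq X$, so that $X=(X\cap I)\cup(J\setminus I)$ and $(X\cap I,k)$ is the required preimage.

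Parts (2) and (4) are straightforward direct verifications once (1) is in hand. For (2), $\eps_{I,I}=\id$ is immediate from $I\setminus I=\es$, and the composition identity reduces, in the $k>0$ case, to the set-theoretic equality $(X\cup(J\setminus I))\cup(K\setminus J)=X\cup(K\setminus I)$, which holds whenever $I\subseteq J\subseteq K$. For (4), I split on the sign of $k$ again: when $k=0$ both sides equal $(g[X],0)=(f[X],0)$ because $X\subseteq I'$ and $f=g\res_{I'}$; when $k>0$ both sides equal $(g[X]\cup(J''\setminus J'),k)$, the key ingredient being that $g$ is a bijection with $g[I']=J'$ forces $g[I''\setminus I']=J''\setminus J'$.

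Part (3) is the point at which one must invoke the definition of $\bB_J$. Given $(X,k)\in\bD_J$, I produce an explicit $I\in\fin{J}$ with $(X,k)\in\bD_{I,J}$: take $I\eqdef X$ if $k=0$ (then $X\in\fin{J}$ by definition of $\bD_J$) and $I\eqdef J\setminus X$ if $k>0$ (then $J\setminus X\in\fin{J}$). The ascending-union assertion then follows from the monotonicity $I\subseteq I'\Rightarrow\bD_{I,J}\subseteq\bD_{I',J}$, itself clear from $\fin{I}\subseteq\fin{I'}$ and the fact that any member of $\fin{I}$ (or any set whose $J$-complement lies in $\fin{I}$) has the corresponding property with $I$ replaced by $I'$. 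There is no real obstacle in any of the four parts; the main discipline is simply to keep the two regimes $k=0$ and $k>0$ carefully separate, since the definitions of $\bD_{I,J}$ and $\eps_{I,J}$ treat them asymmetrically.
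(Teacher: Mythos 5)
Your proposal is correct; the paper explicitly omits the proof of this claim as ``straightforward,'' and your case analysis on the second coordinate ($k=0$ versus $k>0$), including the key observations that $X\subseteq I$ forces $X\cap(J\setminus I)=\es$ and that $J\setminus X\in\fin{I}$ forces $J\setminus I\subseteq X$, is precisely the routine verification the author intends the reader to supply. Nothing is missing.
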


For any set~$K$, we denote by~$\cL_K$ the first-order language obtained by adding to the language $(\vee,\wedge,0,1)$, of bounded lattices, a collection of constant symbols indexed by~$\bD_K$.
Then for every set~$I$ containing~$K$, the lattice~$\bD_I$ is naturally equipped with a structure of model for~$\scL_K$, by interpreting every $\ba\in\bD_K$ by $\eps_{K,I}(\ba)$.

For infinite sets~$I$ and~$J$, a finite subset~$K$ of~$I\cap J$, and finite sequences $(\bx_1,\dots,\bx_n)$ of elements of~$\bD_I$ and $(\by_1,\dots,\by_n)$ of elements of~$\bD_J$, let the statement $(\bx_1,\dots,\bx_n)\simeq_K(\by_1,\dots,\by_n)$ hold if there are $I'\in\fin{I}$ and $J'\in\fin{J}$ both containing~$K$, a bijection $f\colon I'\to J'$ extending the identity of~$K$, and elements $\bx'_1,\dots,\bx'_n\in\bD_{I'}$, such that each $\bx_i=\eps_{I',I}(\bx'_i)$ and each $\by_i=\eps_{J',J}(\ol{f}(\bx'_i))$.

\begin{claim}\label{Cl:simKBF}
The relation~$\simeq_K$ is a back-and-forth family for~$(\bD_I,\bD_J)$ with respect to the language~$\cL_K$.
\end{claim}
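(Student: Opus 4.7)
The plan is to verify the three standard conditions for a back-and-forth family of partial $\cL_K$-isomorphisms: that $\simeq_K$ is nonempty, that every $\simeq_K$-related pair $(\bar\bx,\bar\by)$ induces a well-defined partial $\cL_K$-isomorphism $\bx_i\mapsto\by_i$, and that this family has the forth (and, symmetrically, back) extension property.

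Nontriviality holds by taking $I'=J'=K$, $f=\id_K$, and the empty tuples. For the partial-isomorphism check, fix a witness $(I',J',f,\bx'_1,\dots,\bx'_n)$ for $\bar\bx\simeq_K\bar\by$. Since $\eps_{I',I}$ and $\eps_{J',J}$ are lattice embeddings by Claim~\ref{Cl:Eltaryeps}(i) and $\ol{f}$ is a lattice isomorphism, the map $\bx_i\mapsto\by_i$ preserves the value of every lattice term; in particular it is well defined. It remains to check that it matches the interpretations of the constants of $\cL_K$. For $\ba\in\bD_K$, Claim~\ref{Cl:Eltaryeps}(ii) gives $\eps_{K,I}(\ba)=\eps_{I',I}(\eps_{K,I'}(\ba))$ and $\eps_{K,J}(\ba)=\eps_{J',J}(\eps_{K,J'}(\ba))$; Claim~\ref{Cl:Eltaryeps}(iv), applied to the inner pair $K\subseteq K$ with bijection $\id_K$ and outer pair $K\subseteq I'$, $K\subseteq J'$ with bijection $f$ (legal because $f$ extends $\id_K$), yields $\ol{f}(\eps_{K,I'}(\ba))=\eps_{K,J'}(\ba)$. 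Combining, $\bx_i\mapsto\by_i$ sends $\eps_{K,I}(\ba)$ to $\eps_{K,J}(\ba)$, as required for each constant.

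For the forth property, given a witness as above and a new element $\bx_{n+1}\in\bD_I$, Claim~\ref{Cl:Eltaryeps}(iii) provides $I''\in\fin{I}$ with $\bx_{n+1}\in\bD_{I'',I}$, say $\bx_{n+1}=\eps_{I'',I}(\bz)$. Put $I^*\eqdef I'\cup I''$ (still finite, still containing $K$). Because $J$ is infinite and $J'$ is finite, we may choose an injection $h\colon I^*\sd I'\to J\sd J'$ and then extend $f$ by $h$ to a bijection $f^*\colon I^*\to J^*\eqdef J'\cup h[I^*\sd I']$; clearly $f^*$ extends $\id_K$. Using Claim~\ref{Cl:Eltaryeps}(ii), replace each $\bx'_i$ (for $i\le n$) by $\eps_{I',I^*}(\bx'_i)\in\bD_{I^*}$, and set $\bx'_{n+1}\eqdef\eps_{I'',I^*}(\bz)$; then $\bx_i=\eps_{I^*,I}(\bx'_i)$ for all $i\le n+1$. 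Define $\by_{n+1}\eqdef\eps_{J^*,J}(\ol{f^*}(\bx'_{n+1}))$. That the updated witness $(I^*,J^*,f^*,\bx'_1,\dots,\bx'_{n+1})$ also reproduces the original $\by_i$'s follows from Claim~\ref{Cl:Eltaryeps}(iv), applied to $I'\subseteq I^*$, $J'\subseteq J^*$, $f^*\supseteq f$, which gives $\ol{f^*}\circ\eps_{I',I^*}=\eps_{J',J^*}\circ\ol{f}$, whence $\eps_{J^*,J}(\ol{f^*}(\eps_{I',I^*}(\bx'_i)))=\eps_{J',J}(\ol{f}(\bx'_i))=\by_i$ for every $i\le n$. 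The back property is handled symmetrically, using this time that $I$ is infinite.

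The only step that requires real attention is this forth/back extension, and it is precisely where the hypothesis that $I$ and $J$ are infinite is used: one needs an injection $h\colon I^*\sd I'\to J\sd J'$ (respectively, in the reverse direction) to promote $f$ to a bijection covering the new finite set of ``active coordinates''. Everything else is a matter of diagram-chasing with the coherence properties packaged in Claim~\ref{Cl:Eltaryeps}(i)--(iv).
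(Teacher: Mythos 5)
Your proposal is correct and follows essentially the same route as the paper: the same composite isomorphism $\eps_{J',J}\circ\ol{f}\circ\eps_{I',I}^{-1}$ (spelled out via Claim~\ref{Cl:Eltaryeps}(i), (ii), (iv)) for the quantifier-free/constant-preservation step, and the same enlargement of the witness sets plus an extension of $f$ to a bijection (using that $J$, resp.\ $I$, is infinite) for the forth and back steps. The only cosmetic difference is that you form $I^*=I'\cup I''$ whereas the paper chooses $I''$ already containing $I'$.
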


\begin{cproof}
Trivially, $\es\simeq_K\es$.
Further, if $(\bx_1,\dots,\bx_n)\simeq_K(\by_1,\dots,\by_n)$ holds \emph{via}~$I'$, $J'$, and~$f$ as above, then $\eps_{J',J}\circ\ol{f}\circ\eps_{I',I}^{-1}$ is an isomorphism from~$\bD_{I',I}$ onto~$\bD_{J',J}$, sending each~$\bx_i$ to~$\by_i$ and each $\eps_{K,I}(\bz)$, where~$\bz\in\bD_K$, to~$\eps_{K,J}(\bz)$; whence $(\bx_1,\dots,\bx_n)$ and $(\by_1,\dots,\by_n)$ satisfy the same quantifier-free formulas of~$\cL_K$.

Now let $(\bx_1,\dots,\bx_n)\simeq_K(\by_1,\dots,\by_n)$, \emph{via}~$I'$, $J'$, $f\colon I'\to J'$, and elements $\bx'_i\in\bD_{I'}$.
Let $\bx\in\bD_I$.
We need to find $\by\in\bD_J$ such that $(\bx_1,\dots,\bx_n,\bx)\simeq_K(\by_1,\dots,\by_n,\by)$.
There are a finite set~$I''$, with $I'\subseteq I''\subseteq I$, and $\bx''\in\bD_{I''}$, such that $\bx=\eps_{I'',I}(\bx'')$.
We set $\bx''_i=\eps_{I',I''}(\bx'_i)$ for each~$i$.
Since~$J$ is infinite, we can extend~$f$ to a bijection $g\colon I''\to J''$, with $J''\subseteq J$.
Then each $\bx_i=\eps_{I'',I}(\bx''_i)$ and (using Claim~\ref{Cl:Eltaryeps}) $\by_i=\eps_{J'',J}\pI{\ol{g}(\bx''_i)}$.
Hence, setting $\by\eqdef\eps_{J'',J}\pI{\ol{g}(\bx'')}$, we get the relation
 \begin{equation}\label{Eq:simeqext}
 (\bx_1,\dots,\bx_n,\bx)\simeq_K
 (\by_1,\dots,\by_n,\by)\,.
 \end{equation}
Symmetrically, for all $\by\in\bD_J$, there exists $\bx\in\bD_I$ such that~\eqref{Eq:simeqext} holds.
\end{cproof}

By Karp's Theorem (cf. Karp~\cite{Karp}, Barwise \cite[Thm.~VII.5.3]{Barw}, Keisler and Knight \cite[Thm.~1.2.1]{KeKn2004}), it follows that~$\bD_I$ and~$\bD_J$ satisfy the same $\scL_{\infty,\go}$-sentences of the language~$\cL_K$.
By letting~$I\eqdef\go$, $J\eqdef\go_1$ and by letting~$K$ range over all finite subsets of~$\go$, we thus obtain the following claim.

\begin{claim}\label{Cl:Lequivgogo1}
The lattice~$\bD_{\go,\go_1}$ $(\cong\bD_{\go})$ is an $\scL_{\infty,\go}$-el\-e\-men\-tary sublattice of~$\bD_{\go_1}$.
\end{claim}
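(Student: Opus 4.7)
The plan is to reduce the claim directly to the back-and-forth analysis already carried out in Claim~\ref{Cl:simKBF}, then invoke Karp's Theorem. Fix an $\scL_{\infty,\go}$-sentence~$\gf$ over the language of bounded lattices, involving finitely many parameters from~$\bD_{\go,\go_1}$. Since every element of~$\bD_{\go,\go_1}$ has the form $\eps_{\go,\go_1}(\ba)$ for a unique $\ba\in\bD_{\go}$, and since, by Claim~\ref{Cl:Eltaryeps}(iii) applied to $J\eqdef\go$, every such~$\ba$ lies in $\bD_{K,\go}$ for some finite $K\in\fin{\go}$, we may assume, by taking the union of the finitely many witnessing~$K$'s, that all parameters of~$\gf$ are of the form $\eps_{K,\go_1}(\bz)=\eps_{\go,\go_1}(\eps_{K,\go}(\bz))$ for a single finite $K\subseteq\go$ and $\bz\in\bD_K$. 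Interpreting each constant symbol of $\cL_K$ indexed by $\bz\in\bD_K$ accordingly, $\gf$ becomes a sentence of $\cL_K$.

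Next I will apply Claim~\ref{Cl:simKBF} with $I\eqdef\go$ and $J\eqdef\go_1$: it supplies a back-and-forth family between $\bD_{\go}$ and $\bD_{\go_1}$ for the language~$\cL_K$, where each constant $\bz\in\bD_K$ is interpreted as~$\eps_{K,\go}(\bz)$ in~$\bD_{\go}$ and as~$\eps_{K,\go_1}(\bz)$ in~$\bD_{\go_1}$. By Karp's Theorem, $\bD_{\go}$ and $\bD_{\go_1}$ satisfy exactly the same $\scL_{\infty,\go}$-sentences of~$\cL_K$; in particular, they agree on~$\gf$.

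It then remains to transfer this $\cL_K$-equivalence from~$\bD_{\go}$ to its isomorphic copy $\bD_{\go,\go_1}$ inside~$\bD_{\go_1}$. The direct-system identity $\eps_{K,\go_1}=\eps_{\go,\go_1}\circ\eps_{K,\go}$ from Claim~\ref{Cl:Eltaryeps}(ii) says precisely that $\eps_{\go,\go_1}\colon\bD_{\go}\to\bD_{\go,\go_1}$ sends the interpretation of each constant in~$\bD_{\go}$ to its interpretation in the substructure $\bD_{\go,\go_1}$ of~$\bD_{\go_1}$; together with Claim~\ref{Cl:Eltaryeps}(i) (which gives that it is a lattice isomorphism), this makes $\eps_{\go,\go_1}$ an $\cL_K$-isomorphism. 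Hence $\bD_{\go,\go_1}$ satisfies~$\gf$ if and only if~$\bD_{\go}$ does, if and only if~$\bD_{\go_1}$ does, as desired.

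There is no real obstacle beyond bookkeeping: the only thing to verify carefully is that the interpretations of the constant symbols of~$\cL_K$ inside $\bD_{\go_1}$, viewed both as a model on its own and as the ambient model of the substructure~$\bD_{\go,\go_1}$, coincide with the transported interpretations from~$\bD_{\go}$. This is exactly what Claim~\ref{Cl:Eltaryeps}(ii) provides. Everything else is an immediate invocation of the back-and-forth family of Claim~\ref{Cl:simKBF} and of Karp's Theorem.
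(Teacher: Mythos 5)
Your proposal is correct and follows exactly the paper's route: the paper likewise derives the claim by applying Karp's Theorem to the back-and-forth family of Claim~\ref{Cl:simKBF} with $I=\go$, $J=\go_1$, letting~$K$ range over the finite subsets of~$\go$ so as to absorb the finitely many parameters, and then transporting along the isomorphism $\eps_{\go,\go_1}$. You have merely written out the bookkeeping (locating the parameters in some $\bD_{K,\go_1}$ and checking that $\eps_{\go,\go_1}$ is an $\cL_K$-isomorphism via $\eps_{K,\go_1}=\eps_{\go,\go_1}\circ\eps_{K,\go}$) that the paper leaves implicit.
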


Now we move to $\ell$-rep\-re\-sentabil\-ity.

\begin{claim}\label{Cl:DgoRepres}
Let~$I$ be countably infinite.
Then~$\bD_I$ is $\ell$-rep\-re\-sentable.
\end{claim}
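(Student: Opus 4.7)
The plan is to verify that~$\bD_I$ is a countable, bounded, completely normal distributive lattice, and then invoke Theorem~\ref{T:ReprCN}.

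First, countability: since~$I$ is countably infinite, $\bB_I$ consists of countably many finite subsets and countably many cofinite subsets, hence is countable, so $\bD_I\subseteq\bB_I\times\three$ is countable as well. Boundedness and distributivity are inherited from~$\bB_I\times\three$ once one checks that~$\bD_I$ is closed under the componentwise operations: if $(X,k),(Y,l)\in\bD_I$, then $(X\cup Y,\max\set{k,l})$ and $(X\cap Y,\min\set{k,l})$ lie in~$\bD_I$, because finiteness is preserved by binary unions and intersections with finite sets, and cofiniteness is preserved by binary unions with cofinite sets and by intersections of cofinite sets.

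The main work is the verification of complete normality, which I would handle by a short case analysis on the second coordinates $k,l$ of two given elements $a=(X,k)$ and $b=(Y,l)$ of~$\bD_I$. The guiding idea is the one used already for~$\bB_I$ and~$\three$ separately: take $x$ and~$y$ to ``cover'' the symmetric difference with disjoint witnesses. When $k=l=0$ (both~$X,Y$ finite), one sets $x\eqdef(X\sd Y,0)$ and $y\eqdef(Y\sd X,0)$. When $\set{k,l}=\set{0,l}$ with $l\geq 1$, say $X$ finite and $Y$ cofinite, the witnesses $x\eqdef(X\sd Y,0)$ and $y\eqdef(Y\sd X,l)$ work, because $Y\sd X$ is cofinite. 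When $k=l\geq 1$ (both~$X,Y$ cofinite), the witnesses $x\eqdef(X\sd Y,0)$ and $y\eqdef(Y\sd X,0)$ are finite and still produce the required inequalities. The only case requiring an asymmetric choice is $(k,l)=(1,2)$ (or its mirror $(2,1)$): then $Y\sd X$ is \emph{finite}, so one cannot use it together with second coordinate~$2$. Instead one takes $x\eqdef(X\sd Y,0)$ and $y\eqdef(Y\cup(I\sd X),2)$; the set $Y\cup(I\sd X)$ is cofinite, $b\leq a\vee y$ holds because $X\cup Y\cup(I\sd X)=I\supseteq Y$, and disjointness follows from $(X\sd Y)\cap(Y\cup(I\sd X))=\es$.

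The hard part is nothing deep---it is merely bookkeeping of which component of $\bB_I\times\three$ forces the witnesses into which part of~$\bD_I$. Once complete normality is established, Theorem~\ref{T:ReprCN} applied to~$\bD_I$ produces an Abelian $\ell$-group~$G$ with $\Idc G\cong\bD_I$, so~$\bD_I$ is $\ell$-rep\-re\-sentable.
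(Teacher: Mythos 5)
Your proof is correct: the case analysis covers all pairs $(k,l)\in\three\times\three$, the witnesses you exhibit do lie in~$\bD_I$ (the only delicate membership checks being that $Y\sd X$ is cofinite when $X$ is finite and $Y$ cofinite, and that $X\sd Y$ is finite when both are cofinite), and the asymmetric choice $y=(Y\cup(I\sd X),2)$ in the case $(k,l)=(1,2)$ correctly circumvents the failure of the naive witness $(Y\sd X,2)$. Your route — verify that $\bD_I$ is a countable bounded completely normal distributive lattice and invoke Theorem~\ref{T:ReprCN} — is in fact the first of two justifications the paper itself records: its proof of the Claim consists of the single remark that the statement ``trivially follows from Theorem~\ref{T:ReprCN}'', followed by the alternative observation that $\bD_I\cong\Idc{G}$ for the concrete $\ell$-group~$G$ of Example~\ref{Ex:IdcGnotdHagain} (eventually affine maps $\go\to\ZZ$). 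The paper leaves the complete-normality check implicit, so your bookkeeping fills a genuine (if routine) gap in the first route; the second route is worth knowing because it is self-contained, avoids deploying the main theorem of the paper merely to settle an example, and ties Example~\ref{Ex:Not1Ord} back to Example~\ref{Ex:IdcGnotdHagain} (roughly, $(X,k)$ corresponds to the principal $\ell$-ideal generated by a map with support~$X$ and eventual slope determined by~$k$). Either argument is acceptable; yours trades a small amount of combinatorial checking for independence from the explicit $\ell$-group.
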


\begin{cproof}
While Claim~\ref{Cl:DgoRepres} trivially follows from Theorem~\ref{T:ReprCN}, it is also easy to verify that~$\bD_I\cong\Idc{G}$ where~$G$ is the $\ell$-group of Example~\ref{Ex:IdcGnotdHagain}.
\end{cproof}

\begin{claim}\label{Cl:Dgo1notRepr}
The lattice~$\bD_{\go_1}$ does not have countably based differences.
In particular, it is not $\ell$-rep\-re\-sentable.
\end{claim}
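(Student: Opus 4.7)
The plan is to exhibit a single pair of elements $a,b \in \bD_{\go_1}$ whose difference filter $a \ominus_{\bD_{\go_1}} b$ fails to be countably generated; the non-$\ell$-representability then follows immediately from Lemma~\ref{L:aleqb+cn}.

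The natural candidates are $a \eqdef (\go_1, 2)$ and $b \eqdef (\go_1, 1)$, both of which lie in $\bD_{\go_1}$ since their second coordinate is nonzero and $\go_1 \setminus \go_1 = \es$ is finite. First I would compute $a \ominus_{\bD_{\go_1}} b$ explicitly. For an element $(X,k) \in \bD_{\go_1}$, the join $b \vee (X,k) = (\go_1 \cup X, \max(1,k)) = (\go_1, \max(1,k))$, so $a \leq b \vee (X,k)$ holds precisely when $k = 2$, which forces $\go_1 \setminus X$ to be finite. Thus
\[
a \ominus_{\bD_{\go_1}} b
 \;=\; \setm{(X,2)}{X \subseteq \go_1,\ \go_1 \setminus X \text{ finite}}\,,
\]
and the ordering on this set is inclusion of the first coordinates.

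Next I would show this filter has no countable basis. Suppose, for contradiction, that $\vecm{(Y_n, 2)}{n < \go}$ is a countable basis, so every cofinite $X \subseteq \go_1$ contains some $Y_n$. Each complement $\go_1 \setminus Y_n$ is finite, hence $\bigcup_{n<\go}(\go_1 \setminus Y_n)$ is a countable subset of the uncountable set~$\go_1$. Pick $\ga \in \go_1$ outside this union, so $\ga \in \bigcap_n Y_n$. Then $X \eqdef \go_1 \setminus \set{\ga}$ is cofinite, yet no $Y_n$ is contained in~$X$ (each contains~$\ga$), contradicting the basis property.

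Hence $\bD_{\go_1}$ does not have countably based differences. By Lemma~\ref{L:aleqb+cn}, the lattice of principal $\ell$-ideals of any Abelian $\ell$-group has countably based differences, so $\bD_{\go_1}$ cannot be of the form $\Idc G$, i.e., it is not $\ell$-representable. No step here is particularly delicate; the only point requiring care is verifying that the chosen $a$ and $b$ genuinely belong to $\bD_{\go_1}$ and that the computation of $a \ominus_{\bD_{\go_1}} b$ is carried out inside $\bD_{\go_1}$ rather than in the ambient lattice $\bB_{\go_1} \times \three$, but both are immediate from the definitions.
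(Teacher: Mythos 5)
Your proof is correct and follows exactly the paper's argument: the paper uses the same pair $(\go_1,1)$, $(\go_1,2)$ (with the roles of $\ba$ and $\bb$ merely labelled in the opposite order) and the same filter $\setm{(X,2)}{X\subseteq\go_1\text{ cofinite}}$, then invokes Lemma~\ref{L:aleqb+cn}. You have simply supplied the routine verification, left implicit in the paper, that this filter of cofinite sets admits no countable basis.
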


\begin{cproof}
The elements $\ba\eqdef(\go_1,1)$ and $\bb\eqdef(\go_1,2)$ both belong to~$\bD_{\go_1}$.
Furthermore, the filter $\bb\ominus_{\bD_{\go_1}}\ba=\setm{(X,2)}{X\subseteq\go_1\text{ cofinite}}$ is not countably based.
The second part of our claim follows from Lemma~\ref{L:aleqb+cn}.
\end{cproof}

This claim finishes the proof of Example~\ref{Ex:Not1Ord}.
\end{proof}

\begin{note}
Denote by~$Z$ the completely normal spectral space constructed by Delzell and Madden in \cite[Thm.~2]{DelMad1994}.
Although there is an obvious $0,1$-lattice embedding from~$\bD_{\go_1}$ into~$\cKo(Z)$, it is not hard to see that the two lattices are not isomorphic.
Hence, $Z$ is not homeomorphic to the spectrum of~$\bD_{\go_1}$.
\end{note}

\begin{examplepf}\label{Ex:HomImNonl}
An $\ell$-rep\-re\-sentable bounded distributive lattice of cardinality~$\aleph_1$, with a non-$\ell$-rep\-re\-sentable lattice homomorphic image.
\end{examplepf}

\begin{proof}
The set~$\bD$, of all almost constant maps from~$\go_1$ to~$\three$, is a $0,1$-sublattice of~$\three^{\go_1}$.
It is straightforward to verify that $\bD\cong\Idc{H}$, where~$H$ denotes the Abelian $\ell$-group of all almost constant maps from~$\go_1$ to the lexicographical product of~$\ZZ$ by itself.
Now consider the non-$\ell$-rep\-re\-sentable lattice~$\bD_{\go_1}$ of Example~\ref{Ex:Not1Ord}.
The map $\rho\colon\bD\to\bD_{\go_1}$, $\bx\mapsto(\supp(\bx),x(\infty))$ is a surjective lattice homomorphism.
\end{proof}

By Stone duality, it follows that \emph{a spectral subspace of an $\ell$-spectrum may not be an $\ell$-spectrum}.

\section{Discussion}\label{S:Discussion}

\subsection{Ideal lattices of dimension groups}
\label{Su:IdDimGp}
A partially ordered Abelian group~$G$ is a  \emph{dimension group} if~$G$ is directed, unperforated (i.e., $mx\geq0$ implies that~$x\geq0$, whenever $x\in G$ and~$m$ is a positive integer), and~$G^+$ satisfies the Riesz refinement property (cf. Goodearl~\cite{Gpoag}).
The construction~$\Idc{G}$, for an Abelian $\ell$-group~$G$, extends naturally to arbitrary dimension groups, by replacing ``$\ell$-ide\-al'' by ``directed convex subgroup'' (in short \emph{ideal}).
However, now~$\Idc{G}$ is only a \jzs.
This semilattice is always distributive (i.e., it satisfies the Riesz refinement property), but it may not be a lattice.
In fact, \emph{every countable distributive \jzs\ is isomorphic to~$\Idc{G}$ for some countable dimension group~$G$} (this is stated in Goodearl and Wehrung \cite[Thm.~5.2]{GoWe2001}; it is also implicit in Bergman~\cite{Berg86});
\emph{moreover, the countable size is optimal} (Wehrung~\cite{CXAl1}).

In particular, it follows from Goodearl and Wehrung~\cite[Thm.~4.4]{GoWe2001} that for every distributive lattice~$L$ with zero, there exists a dimension group~$G$ such that $\Idc{G}\cong L$ (without any restriction on the cardinality of~$L$).
Attempting to infer, \emph{via} Theorem~1 of Elliott and Mundici~\cite{EllMun1993}, that if~$L$ is completely normal, then~$G$ is lattice-ordered, would already fail for the lattice $L\eqdef\bD_{\go_1}$ of Example~\ref{Ex:Not1Ord}, simply because~$\bD_{\go_1}$ is not $\ell$-rep\-re\-sentable.
The problem lies in the impossibility to read, on~$\Idc{G}$ alone, that every prime quotient of~$G$ be totally ordered, as illustrated by the following example (cf. \cite[p.~181]{EllMun1993}): let~$G$ be any non totally ordered simple dimension group (e.g., $G=\QQ\times\QQ$ with positive cone consisting of all $(x,y)$ with either $x=y=0$ or~$x>0$ and~$y>0$).
Then $\Idc{G}\cong\two$, yet~$G$ is not totally ordered.

\subsection{Lattices of $\ell$-ide\-als in non-Abelian $\ell$-groups}\label{Su:NonAbell}
It is proved in R\r{u}\v zi\v cka, T\r{u}ma, and Wehrung~\cite[Thm.~6.3]{RTW} that \emph{every countable distributive \jzs\ is isomorphic to~$\Idc{G}$ for some $\ell$-group~$G$; moreover, this result does not extend to semilattices of cardinality~$\aleph_2$}.
The gap at size~$\aleph_1$ is not filled yet.

\subsection{Open problems}\label{Su:OpPb}
Mellor and Tressl proved in~\cite{MelTre2012} that for any infinite cardinal~$\gl$, there is no~$\scL_{\infty,\gl}$ characterization of Stone duals of real spectra of commutative unital rings.
Our first open problem calls for an extension of that result to $\ell$-spectra, which would thus also extend the result of Example~\ref{Ex:Not1Ord} (where we get only~$\scL_{\infty,\go}$).

\begin{problem}\label{Pb:NotLinftyinfty}
Is the class of all $\ell$-rep\-re\-sentable lattices the class of all models of a class of~$\scL_{\infty,\gl}$ sentences, for some infinite cardinal~$\gl$?
\end{problem}

Recall from Example~\ref{Ex:HomImNonl} that a spectral subspace of an $\ell$-spectrum may not be an $\ell$-spectrum.
We also extend this result to real spectra in~\cite{MVRS}.
This suggests the following problem.

\begin{problem}\label{Pb:Retract}
Is every retract of an $\ell$-spectrum \pup{resp., real spectrum} also an $\ell$-spectrum \pup{resp., real spectrum}?
\end{problem}

The analogy between $\ell$-spec\-tra and real spectra (cf. Delzell and Madden~\cite{DelMad1995}), together with Corollary~\ref{C:ReprCN1}, suggests the following problem.

\begin{problem}\label{Pb:SepRealSpec}
Is every second countable completely normal spectral space homeomorphic to the real spectrum of some commutative, unital ring?
\end{problem}

The more general question, of characterizing real spectra of commutative, unital rings, is part of Problem~12 in Keimel's survey paper~\cite{Keim1995}.
Due to results by Mellor and Tressl~\cite{MelTre2012}, this is essentially hopeless without any cardinality restriction.

\section*{Acknowledgments}
The MV-space problem was introduced to me during a visit, to Universit\`a degli studi di Salerno, in September 2016.
Excellent conditions provided by the math department are greatly appreciated.

Uncountably many thanks are also due to Estella and Lynn, who stayed at my side during those very hard moments, and also to our family's cat Nietzsche, who had decided, that fateful November morning, that I shouldn't leave my desk.


\begin{thebibliography}{10}

\bibitem{Baker1968}
Kirby~A. Baker, \emph{Free vector lattices}, Canad. J. Math. \textbf{20}
  (1968), 58--66. \MR{0224524}

\bibitem{Barw}
Jon Barwise, \emph{{Admissible Sets and Structures}}, Springer-Verlag,
  Berlin-New York, 1975, An approach to definability theory, Perspectives in
  Mathematical Logic. \MR{0424560 (54 \#12519)}

\bibitem{Bell2016}
John~L. Bell, \emph{Infinitary logic}, The Stanford Encyclopedia of Philosophy
  (Edward~N. Zalta, ed.), Metaphysics Research Lab, Stanford University, winter
  2016 ed., 2016, accessible at the URL
  \url{https://plato.stanford.edu/entries/logic-infinitary/}.

\bibitem{Berg86}
George~M. Bergman, \emph{{Von Neumann regular rings with tailor-made ideal
  lattices}}, Unpublished note, available online at
  \url{http://math.berkeley.edu/\~{}gbergman/papers/unpub/}, October 26, 1986.

\bibitem{BKW}
Alain Bigard, Klaus Keimel, and Samuel Wolfenstein, \emph{{Groupes et {A}nneaux
  {R}\'eticul\'es}}, Lecture Notes in Mathematics, Vol. 608, Springer-Verlag,
  Berlin-New York, 1977. \MR{0552653 (58 \#27688)}

\bibitem{CGL}
Roberto Cignoli, Daniel Gluschankof, and Fran{\c{c}}ois Lucas, \emph{Prime
  spectra of lattice-ordered abelian groups}, J. Pure Appl. Algebra
  \textbf{136} (1999), no.~3, 217--229. \MR{1675803}

\bibitem{CigTor1996}
Roberto Cignoli and Antoni Torrens, \emph{The poset of prime {$\ell$}-ideals of
  an abelian {$\ell$}-group with a strong unit}, J. Algebra \textbf{184}
  (1996), no.~2, 604--612. \MR{1409232}

\bibitem{CosRoy1982}
Michel Coste and Marie-Fran\c{c}oise Roy, \emph{La topologie du spectre
  r\'eel}, Ordered fields and real algebraic geometry ({S}an {F}rancisco,
  {C}alif., 1981), Contemp. Math., vol.~8, Amer. Math. Soc., Providence, R.I.,
  1982, pp.~27--59. \MR{653174}

\bibitem{DavPri1990}
Brian~A. Davey and Hilary~A. Priestley, \emph{Introduction to {L}attices and
  {O}rder}, Cambridge Mathematical Textbooks, Cambridge University Press,
  Cambridge, 1990. \MR{1058437}

\bibitem{DelMad1994}
Charles~N. Delzell and James~J. Madden, \emph{A completely normal spectral
  space that is not a real spectrum}, J. Algebra \textbf{169} (1994), no.~1,
  71--77. \MR{1296582}

\bibitem{DelMad1995}
\bysame, \emph{Lattice-ordered rings and semialgebraic geometry. {I}}, Real
  analytic and algebraic geometry ({T}rento, 1992), de Gruyter, Berlin, 1995,
  pp.~103--129. \MR{1320313}

\bibitem{DinGri2004}
Antonio Di~Nola and Revaz Grigolia, \emph{Pro-finite {MV}-spaces}, Discrete
  Math. \textbf{283} (2004), no.~1-3, 61--69. \MR{2061482}

\bibitem{Dickm1985}
Maximo~A. Dickmann, \emph{Applications of model theory to real algebraic
  geometry. {A} survey}, Methods in mathematical logic ({C}aracas, 1983),
  Lecture Notes in Math., vol. 1130, Springer, Berlin, 1985, pp.~76--150.
  \MR{799038}

\bibitem{EllMun1993}
George~A. Elliott and Daniele Mundici, \emph{A characterisation of
  lattice-ordered abelian groups}, Math. Z. \textbf{213} (1993), no.~2,
  179--185. \MR{1221712}

\bibitem{Gpoag}
Kenneth~R. Goodearl, \emph{{Partially Ordered Abelian Groups with
  Interpolation}}, Mathematical Surveys and Monographs, vol.~20, American
  Mathematical Society, Providence, RI, 1986. \MR{845783 (88f:06013)}

\bibitem{GoWe2001}
Kenneth~R. Goodearl and Friedrich Wehrung, \emph{Representations of
  distributive semilattices in ideal lattices of various algebraic structures},
  Algebra Universalis \textbf{45} (2001), no.~1, 71--102. \MR{1809858}

\bibitem{GGFC}
George Gr{\"a}tzer, \emph{Lattice {T}heory. {F}irst {C}oncepts and
  {D}istributive {L}attices}, W. H. Freeman and Co., San Francisco, Calif.,
  1971. \MR{0321817}

\bibitem{LTF}
\bysame, \emph{{Lattice Theory: Foundation}}, Birkh\"auser/Springer Basel AG,
  Basel, 2011. \MR{2768581 (2012f:06001)}

\bibitem{IMM2011}
Wolf Iberkleid, Jorge Mart{\'{\i}}nez, and Warren~Wm. McGovern, \emph{Conrad
  frames}, Topology Appl. \textbf{158} (2011), no.~14, 1875--1887. \MR{2823701}

\bibitem{Johnst1982}
Peter~T. Johnstone, \emph{Stone {S}paces}, Cambridge Studies in Advanced
  Mathematics, vol.~3, Cambridge University Press, Cambridge, 1982. \MR{698074}

\bibitem{Karp}
Carol~R. Karp, \emph{Finite-quantifier equivalence}, Theory of {M}odels
  ({P}roc. 1963 {I}nternat. {S}ympos. {B}erkeley), North-Holland, Amsterdam,
  1965, pp.~407--412. \MR{0209132 (35 \#36)}

\bibitem{Keim1971}
Klaus Keimel, \emph{The {R}epresentation of {L}attice-{O}rdered {G}roups and
  {R}ings by {S}ections in {S}heaves},  (1971), 1--98. Lecture Notes in Math.,
  Vol. 248. \MR{0422107}

\bibitem{Keim1995}
\bysame, \emph{Some trends in lattice-ordered groups and rings}, Lattice theory
  and its applications ({D}armstadt, 1991), Res. Exp. Math., vol.~23,
  Heldermann, Lemgo, 1995, pp.~131--161. \MR{1366870}

\bibitem{KeKn2004}
H.~Jerome Keisler and Julia~F. Knight, \emph{Barwise: infinitary logic and
  admissible sets}, Bull. Symbolic Logic \textbf{10} (2004), no.~1, 4--36.
  \MR{2062240}

\bibitem{Kenoy1984}
David Kenoyer, \emph{Recognizability in the lattice of convex
  {$\ell$}-subgroups of a lattice-ordered group}, Czechoslovak Math. J.
  \textbf{34(109)} (1984), no.~3, 411--416. \MR{761423}

\bibitem{MaMu2002a}
Vincenzo Marra and Daniele Mundici, \emph{Combinatorial fans, lattice-ordered
  groups, and their neighbours: a short excursion}, S\'em. Lothar. Combin.
  \textbf{47} (2001/02), Article B47f, 19. \MR{1894026}

\bibitem{MaMu2002b}
\bysame, \emph{M{V}-algebras and abelian {$\ell$}-groups: a fruitful
  interaction}, Ordered algebraic structures, Dev. Math., vol.~7, Kluwer Acad.
  Publ., Dordrecht, 2002, pp.~57--88. \MR{2083034}

\bibitem{McClear1986}
Stephen~H. McCleary, \emph{Lattice-ordered groups whose lattices of convex
  {$\ell$}-subgroups guarantee noncommutativity}, Order \textbf{3} (1986),
  no.~3, 307--315. \MR{878927}

\bibitem{MelTre2012}
Timothy Mellor and Marcus Tressl, \emph{Non-axiomatizability of real spectra in
  {$\mathscr{L}_{\infty\lambda}$}}, Ann. Fac. Sci. Toulouse Math. (6)
  \textbf{21} (2012), no.~2, 343--358. \MR{2978098}

\bibitem{Mont1954}
Ant{\'o}nio~A. Monteiro, \emph{L'arithm\'etique des filtres et les espaces
  topologiques}, Segundo symposium sobre algunos problemas matem\'aticos que se
  est\'an estudiando en {L}atino {A}m\'erica, {J}ulio, 1954, Centro de
  Cooperaci\'on Cientifica de la UNESCO para Am\'erica Latina, Montevideo,
  Uruguay, 1954, pp.~129--162. \MR{0074805}

\bibitem{Mund2011}
Daniele Mundici, \emph{Advanced {{\L}}ukasiewicz {C}alculus and
  {MV}-{A}lgebras}, Trends in Logic---Studia Logica Library, vol.~35, Springer,
  Dordrecht, 2011. \MR{2815182}

\bibitem{RTW}
Pavel R\r{u}\v{z}i\v{c}ka, Ji\v{r}{\'\i} T\r{u}ma, and Friedrich Wehrung,
  \emph{Distributive congruence lattices of congruence-permutable algebras}, J.
  Algebra \textbf{311} (2007), no.~1, 96--116. \MR{2309879}

\bibitem{Schrij1986}
Alexander Schrijver, \emph{Theory of {L}inear and {I}nteger {P}rogramming},
  Wiley-Interscience Series in Discrete Mathematics, John Wiley \& Sons, Ltd.,
  Chichester, 1986, A Wiley-Interscience Publication. \MR{874114}

\bibitem{Sch1986}
Niels Schwartz, \emph{Real closed rings}, Algebra and order
  ({L}uminy-{M}arseille, 1984), Res. Exp. Math., vol.~14, Heldermann, Berlin,
  1986, pp.~175--194. \MR{891460}

\bibitem{Stone38a}
Marshall~H. Stone, \emph{{Topological representations of distributive lattices
  and Brouwerian logics}}, {\v{C}as. Mat. Fys.} \textbf{67} (1938), no.~1,
  1--25.

\bibitem{WDim}
Friedrich Wehrung, \emph{The dimension monoid of a lattice}, Algebra
  Universalis \textbf{40} (1998), no.~3, 247--411. \MR{1668068 (2000i:06014)}

\bibitem{CXAl1}
\bysame, \emph{Semilattices of finitely generated ideals of exchange rings with
  finite stable rank}, Trans. Amer. Math. Soc. \textbf{356} (2004), no.~5,
  1957--1970 (electronic). \MR{2031048 (2004j:06006)}

\bibitem{MVRS}
\bysame, \emph{{Real spectrum versus $\ell$-spectrum via Brumfiel spectrum}},
  hal-01550450, preprint, July 2017.

\end{thebibliography}

\providecommand{\noopsort}[1]{}\def\cprime{$'$}
  \def\polhk#1{\setbox0=\hbox{#1}{\ooalign{\hidewidth
  \lower1.5ex\hbox{`}\hidewidth\crcr\unhbox0}}}
\providecommand{\bysame}{\leavevmode\hbox to3em{\hrulefill}\thinspace}
\providecommand{\MR}{\relax\ifhmode\unskip\space\fi MR }
\providecommand{\MRhref}[2]{%
  \href{http://www.ams.org/mathscinet-getitem?mr=#1}{#2}
}
\providecommand{\href}[2]{#2}

\end{document}